\newtheorem{lemma}{Lemma}
\newtheorem{remark}{Remark}[section]
\newtheorem{proposition}[lemma]{Proposition}
\newtheorem{theorem}[lemma]{Theorem}
\newtheorem{definition}[lemma]{Definition}
\newtheorem{theorema}{Theorem}
\newcommand{\bbE}{\mathbb{E}}
\newcommand{\bbN}{\mathbb{N}}
\newcommand{\bbP}{\mathbb{P}}
\newcommand{\bbR}{\mathbb{R}}
\newcommand{\bbZ}{\mathbb{Z}}
\newcommand{\dE}{\mathbb {E}}
\newcommand{\dP}{\mathbb{P}}
\newcommand{\dN}{\mathbb {N}}
\newcommand{\dZ}{\mathbb {Z}}
\newcommand{\dR}{\mathbb {R}}
\newcommand{\cL}{\mathcal{L}}
\newcommand{\cI}{\mathcal{I}}
\newcommand{\cA}{\mathcal{A}}
\newcommand{\cC}{\mathcal{C}}
\newcommand{\cF}{\mathcal{F}}
\newcommand{\cB}{\mathcal{B}}
\newcommand{\cP}{\mathcal {P}}
\newcommand{\cY}{\mathcal {Y}}
\newcommand{\cX}{\mathcal {X}}
\newcommand{\cT}{\mathcal {T}}
\newcommand{\cG}{\mathcal {G}}
\newcommand{\cR}{\mathcal {R}}
\newcommand{\cU}{{U}}
\newcommand{\cQ}{\mathcal {Q}}
\newcommand{\cS}{\mathcal {S}}
\newcommand{\gep}{\varepsilon}       
\newcommand{\gl}{\lambda}
\newcommand{\ind}{\mathbf{1}}
\newcommand{\fh}{ \mathfrak{h}}
\newcommand{\bP}{\mathbf{P}}
\newcommand{\bp}{\mathbf{p}}
\newcommand{\bq}{\mathbf{q}}
\newcommand{\ba}{\mathbf{a}}
\newcommand{\bb}{\mathbf{b}}
\newcommand{\BRA}[1]{{{\left\{#1\right\}}}} 
\newcommand{\NRM}[1]{{{\left\| #1\right\|}}} 
\newcommand{\PAR}[1]{{{\left(#1\right)}}} 
\newcommand{\1}{1\!\!{\sf I}}\newcommand{\IND}{\1}
\newcommand{\veps}{\varepsilon}
\newcommand{\Diam}{\mathrm {diam} }
\newcommand{\Sp}{\mathrm{Sp} }
\newcommand{\AND}{\quad \hbox{ and } \quad}
\newcommand{\NB}{\mathbf{NB}}
\newcommand{\Tm}{T^{\mathrm{mix}}}
\newcommand{\Tmn}{T^{\mathrm{mix}}_n}
\newcommand{\D}{\mathrm{Dist}}
\newcommand{\TV}{\mathrm{TV}}
\definecolor{darkred}{rgb}{0.7,0.1,0.1}
\definecolor{darkblue}{rgb}{0.1,0.1,0.7}
\renewcommand{\rho}{\varrho}
\newcommand{\limsuptwo}[2]{\limsup_{\substack{#1 \\ #2}}}     
 \title[Cutoff at the entropic time]{Cutoff at the entropic time for random walks on covered expander graphs}
\author{Charles Bordenave and Hubert Lacoin}
\begin{document}

\begin{abstract}
It is a fact simple to establish that the mixing time of the simple random walk on a $d$-regular graph $G_n$ with $n$ vertices is asymptotically bounded from below by 
$\frac{d  }{d-2 } \frac{\log n}{\log (d-1)}$. Such a bound is obtained 
by comparing the walk on $G_n$ to the walk on $d$-regular tree $\cT_d$.
If one can map another transitive graph $\cG$ onto $G_n$, then we can improve the strategy by using a comparison with the random walk on $\cG$ (instead of that of $\cT_d$), and we obtain a lower bound of the form $\frac{1}{\fh}\log n$, where $\fh$ is the entropy rate associated with $\cG$. We call  this  the entropic lower bound.\\
It was recently proved that in the case $\cG=\cT_d$, this entropic lower bound (in that case $\frac{d  }{d-2 } \frac{\log n}{\log (d-1)}$) is sharp when graphs have minimal spectral radius and thus that in that case the random walk exhibit cutoff at the entropic time.
In this paper, we provide a generalization of the result by providing a sufficient condition on the spectra of the random walks on $G_n$ under which the random walk exhibits cutoff at the entropic time. It applies notably to anisotropic random walks on random $d$-regular graphs and to random walks on random $n$-lifts of a base graph (including non-reversible walks).
\\[10pt]
  2010 \textit{Mathematics Subject Classification:}  	05C81, 60J10.
  \\[10pt]
  \textit{Keywords: Expander Graphs, Mixing time, Spectral Gap, Ramanujan Graph, Entropic time.}
\end{abstract}

\maketitle

\section{Introduction}

This paper is aimed at understanding the mixing properties of random walks on a finite regular graph. We are going to be focused on asymptotic properties when the number of vertices goes to infinity.

\medskip

\subsection*{Minimal mixing time for the simple random walk.}
Let $3 \leq d \leq n-1$ be integers with $nd$ even and let $G_n = (V_n,E_n) $ be a finite simple $d$-regular graph on a vertex set $V_n$ of size $\# V_n = n$. Let $(X_t)_{t\ge 0}$  be the {\em simple random walk}
on $G_n$, which is the Markov process taking values in $V_n$ with transition matrix, 
\begin{equation*}\label{transitionG}
 P_n(x,y)= \frac{\ind_{\{\{x,y\} \in E_n\}}}{d} \quad \quad  \text{for }x,y\in V_n.
\end{equation*}
The uniform measure  on $V_n$ denoted by $\pi_n$ is reversible for the process. Furthermore if $G_n$ is connected, then $\pi_n$ is the unique invariant probability measure of $P_n$. If additionally  $G_n$ is  not bipartite, then $P_n^t(x,\cdot)$
 converges to $\pi_n$ when $t$ tends to infinity.

\medskip

We are interested in estimating the time at which $P_n^t(x,\cdot)$ falls in a close neighborhood of $\pi_n$ in terms of the {\em total variation} distance. More formally,  the {\em total variation mixing time}  associated with threshold  $\gep\in(0,1)$ and initial condition $x \in V_n$, is defined by 
\begin{equation*}
\Tmn (x,\gep) :=\inf\left\{ t  \in \dN \ : \ d_n(x,t)<\gep \right\},
\end{equation*}
where  $d_n(x,t)$ is the {\em total variation} distance to equilibrium
\begin{equation}\label{eqdist}
 d_n(x,t):=  \| P_n^t(x,\cdot)-\pi_n\|_{\TV} =  \frac 1 2 \sum_{y \in V_n} \left| P_n^t(x,y)-\pi_n(y) \right|=
 \max_{A\subset V_n} \left\{  P_n^t(x,A)-\pi_n(A) \right\}.
\end{equation}
The {\em worst-case mixing time} is classically defined as 
$$
\Tmn (\veps ) = \max_{x \in V_n} \Tmn(x ,\veps).
$$

The mixing properties for the random walk are intimately related to the spectrum of $P_n$. An illustration of this is the classical computation based on the spectral decomposition of $P_n$ (see \cite[Theorem 12.4]{MR3726904} for a proof in the reversible case) which allows to control the distance as a function of the {\em singular radius} of $P_n$.  For all $x \in V_n$, 
\begin{equation}\label{standspec}
  d_n(x,t)\le \frac{\sqrt{n-1}}{2}\sigma_n^t.
\end{equation}
where the singular radius $\sigma_n$ 
$$\sigma_n = \| (P_n )_{|\ind^\perp}\|_{2\to 2} $$
 is the $\ell_2$ operator norm of $P_n$  restricted to functions with zero sum. Since $P_n$ is reversible, we have $\sigma_n = \rho_n$ where $\rho_n$ is the spectral radius of $P_n$, that is the second largest eigenvalue of $P_n$ in absolute value counting multiplicities. This yields in particular that 
\begin{equation}\label{basicupper}
\Tmn(\gep)\le   \frac{1}{| \log \rho_n|} \left( \frac{1}{2}\log n -   \log (2\veps)\right).
\end{equation}
In particular, if we have $\rho_n < 1-\delta$ for some fixed $\delta \in (0,1)$ along some sequence of integers  going to infinity, then the upper bound  in \eqref{basicupper} is of order $\log n$ along that sequence.

\medskip

On the other hand, a naive lower bound of the same order of $\Tmn(\gep)$ can be obtaind by using the elementary fact that the graph distance  $\D(x,X_t)$ between $X_t$ and the initial condition $x$ is stochastically dominated by a random walk on the set of non-negative integers, started at $0$, with jump probabilities $1/d$ to the left and $(d-1)/d$ to the right, except at $0$ where the probability to jump to the right is equal to $1$. Thus when starting from $X_0=x$,  $X_t$ remains within distance $r$ from  $x$    at least during a random time of order $\frac{d}{d-2}r +O(\sqrt{r})$. Combining this with the fact that a ball of radius $r$ contains at most $d(d-1)^{r-1}$ vertices, we obtain that  for any $x \in V_n$
\begin{equation}\label{basiclower}
\Tmn(x,1-\gep) \ge    \frac{d}{(d-2) \log (d-1)}\left(\log n- C_{\gep}\sqrt{\log n} \right).
\end{equation}
While the strategy might seem a bit rough, the above bound \eqref{basiclower} can be sharp. This was first discovered for random $d$-regular graphs in \cite{MR2667423}.

\medskip

However, an important observation is that the factor in front of $\log n$ in \eqref{basicupper} and \eqref{basiclower} cannot match. From Alon-Boppana lower bound \cite{MR875835,MR2679612}, we have for any sequence of $d$-regular graphs $(G_n)_{n\ge 0}$
on $n$ vertices, we have $\liminf_n\rho_n \geq \rho:= \frac{2\sqrt{d-1}}{d}$. More precisely, there exists a constant $C = C(d)$ such that for every $n$ and every $d$-regular graph on $n$ vertices
\begin{equation}\label{eq:AB}
\rho_n  \ge \rho-\frac{C}{(\log n)^2}.
\end{equation}
The number $\rho=  2\sqrt{d-1} / d$ is the spectral radius of the simple random walk on the infinite $d$-regular  tree $\cT_d$ (and incidentally also that   of the biased random walk on integers used in the lower-bound strategy). A graph such that $\rho_n \leq \rho$ is called a (non-bipartite) {\em Ramanujan graph}.
Hence a natural question is the following: 
\textit{If a sequence of graphs on $n$ vertices has an asymptotically minimal spectral radius in the sense that $\rho_n=(1 + o(1))\rho $, does it also have a minimal mixing time in the sense that $\Tmn(\gep) = (1+ o(1))  \frac{(d-2)}{d \log (d-1)} \log n$ for any fixed $\veps \in (0,1)$?}

\medskip

\noindent An affirmative  answer was given to this question in \cite{MR3558308} (see also \cite{MR3693771}):

\begin{theorema}[\cite{MR3558308}]\label{th:LP}
Let $d \geq 3$ be an integer and let  $(G_n)_{n\in \bbN}$ be a sequence of $d$-regular graphs on $n$ vertices, for which the associated sequence of spectral radii satisfy $\lim_{n \to \infty} \rho_n = \rho = 2\sqrt{d-1} / d$. Then  for any $\gep\in (0,1)$, we have
\begin{equation}\label{kutof}
\lim_{n\to \infty}\frac{\Tmn(\gep)}{\log n} =\frac{d}{(d-2)\log (d-1)}.
\end{equation}
\end{theorema}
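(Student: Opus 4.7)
\emph{Proof proposal.} The lower bound is already supplied by~\eqref{basiclower}, so the task reduces to showing that $\Tmn(\veps) \leq (1+\eta)\frac{d\log n}{(d-2)\log(d-1)}$ for any fixed $\eta > 0$ and all $n$ large. The plan is to pass to the non-backtracking random walk (NBRW) on the set $\vec E_n$ of oriented edges of $G_n$, whose entropy grows by a full $\log(d-1)$ per step (versus the fractional rate $\fh = \frac{d-2}{d}\log(d-1)$ for SRW), analyse its mixing spectrally, and then transfer the estimate back to the SRW by coupling.

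The first ingredient is the Ihara--Bass identity, which relates the spectrum of the non-backtracking operator $\cQ_n$ on $\vec E_n$ (with $\cQ_n((x,y),(y,z))=\tfrac{1}{d-1}\ind\{z\ne x\}$) to that of $P_n$. Under the hypothesis $\rho_n \to \rho$, Ihara--Bass translates into the statement that the nontrivial spectrum of $\cQ_n$ lies inside a disc of radius $(d-1)^{-1/2}(1+o(1))$, yielding an $\ell_2$-decay bound of the form $\|\cQ_n^k g\|_2 \leq (d-1)^{-k/2}(1+o(1))^k \|g\|_2$ for $g$ orthogonal to the top eigenspace. In particular, choosing $k_\star = (1+\eta/2)\log n/\log(d-1)$, the NBRW is within $o(1)$ of uniform on $\vec E_n$ in $\ell_2$.

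The second ingredient is a coupling: let $(Y_k)$ be the NBRW, constructed so that $Y_k$ advances at each non-backtracking step of $(X_t)$. Then $X_t$ is the endpoint of $Y_{N_t}$, where $N_t$ is the number of non-backtracking steps up to time $t$. A standard Markovian Chernoff bound on $N_t$ shows that $N_{t_1} \geq k_\star$ with probability $1-o(1)$ as soon as $t_1 = (1+\eta)\frac{d \log n}{(d-2)\log(d-1)}$. Projecting the oriented-edge law of $Y_{N_{t_1}}$ to its tail endpoint and invoking Cauchy--Schwarz on the $\ell_2$ estimate of the previous paragraph then yields $\|P_n^{t_1}(x,\cdot)-\pi_n\|_{\TV}=o(1)$, which is the desired upper bound.

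The main obstacle is the precision required in the spectral step. The Alon--Boppana estimate~\eqref{eq:AB} only forces $\rho_n - \rho = O((\log n)^{-2})$, so one cannot afford the bound on $\|\cQ_n^{k_\star}\|$ to carry even a polylogarithmic loss in $n$: the inequality must take the sharp form $(d-1)^{-k_\star/2}(1+o(1))^{k_\star}$ with $o(1)$ genuinely going to $0$. This forces a delicate Ihara--Bass analysis that controls the full resolvent (or equivalently a trace-moment count of short cycles of $G_n$) of $\cQ_n$ rather than merely its spectral radius, and this is where the technical weight of the argument lies. Once this refined spectral estimate is in place, the SRW-to-NBRW coupling and the entropic-time conclusion follow by comparatively routine concentration arguments.
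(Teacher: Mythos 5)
Your architecture (pass to a non-backtracking walk whose entropy per step is the full $\log(d-1)$, mix it spectrally, convert steps back to SRW time via the drift $(d-2)/d$) is the right one, but your route through the oriented-edge operator $\cQ_n$ and Ihara--Bass has a genuine gap at its central step. The operator $\cQ_n$ is not normal, and Ihara--Bass only locates its \emph{eigenvalues}; it says nothing about the operator norms $\|\cQ_n^k\|_{2\to 2}$. For a non-normal matrix, eigenvalues confined to the disc of radius $(d-1)^{-1/2}(1+o(1))$ are compatible with $\|\cQ_n^k g\|_2$ exceeding $(d-1)^{-k/2}\|g\|_2$ by factors coming from the conditioning of the eigenbasis or the Jordan structure, both of which may depend on $n$ and neither of which Ihara--Bass controls. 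Your sentence deferring this to ``a delicate Ihara--Bass analysis that controls the full resolvent'' is exactly the missing proof; without it the upper bound is not established. (Your diagnosis of where precision is needed is also off: a polylogarithmic multiplicative loss is perfectly affordable, since with $k_\star=(1+\eta/2)\log n/\log(d-1)$ one has $\sqrt{n}\,(d-1)^{-k_\star/2}=n^{-\eta/4}$, which absorbs any $(\log n)^{O(1)}$ or even $n^{o(1)}$ factor; what is not affordable is having no bound at all on powers of a non-normal operator.) The paper avoids this entirely by never introducing the edge operator: it works with the $k$-step non-backtracking distribution on \emph{vertices}, $Q_k$, which by Lemma \ref{polyns} is an explicit Chebyshev-type polynomial $p_k(P_n)$ of the self-adjoint matrix $P_n$, so the spectral theorem applies directly and the bound $|p_k(\lambda)|\le (k+1)(d-1)^{-k/2}$ on $[-\rho,\rho]$ does all the work.

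A second defect is the coupling. The claim that $X_t$ is the endpoint of $Y_{N_t}$, with $N_t$ the number of non-backtracking steps up to time $t$, fails at the first backtrack: a backtracking step moves $X$ but not $Y$, so the two desynchronize immediately. The correct statement goes through the universal cover, as in the paper: lift the walk to $\cT_d$, observe that $\mathrm{Dist}(\cX_t,e)$ is a reflected biased walk on the nonnegative integers, and that conditionally on $\mathrm{Dist}(\cX_t,e)=k$ the projected position has law exactly $Q_{k,n}(x,\cdot)$, because non-backtracking paths in a tree are geodesics. With that substitution (and your Chernoff/CLT bound applied to the tree distance rather than to $N_t$), the time-change part of your argument becomes correct and coincides with the paper's; the spectral step, however, still needs to be replaced or completed as above.
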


\begin{remark}
The result above remains of course valid if our sequence $(G_n)$ is indexed by an infinite subset of $\bbN$  provided that $\rho_n$ converges to $\rho$ when $n\to \infty$ in this subset. In the remainder of the paper, with a some small abuse of notation, when using $\lim$, we always assume that the considered sequence may not be defined for every $n$.
\end{remark}

Theorem \ref{th:LP} is an illustration of the {\em cutoff phenomenon}.  A sequence of finite Markov chains corresponding to the sequence of transition matrices $(P_n)$ exhibits cutoff if up to first order in $n$, the mixing time $\Tmn(\veps)$ does not depend on $\veps\in (0,1)$, that is, for any $\veps \in (0,1)$, $\lim_{n \to \infty} \Tmn(\gep)/ \Tmn(1-\gep) =1$. Since its original  discovery  by  Diaconis,  Shashahani  and  Aldous  in the context  of  card  shuffling  \cite{aldous1983mixing,aldous1986shuffling, diaconis1981generating}, this phenomenon has attracted much attention. We refer to  \cite{diaconis1996cutoff,MR3726904} for an introduction and to \cite{MR3650406} for an alternative characterization of cutoff.
For other recent contributions on cutoff for random walks on graphs with bounded degrees, see \cite{MR3650414,MR3758735,MR3773804}.

\medskip

As a warmup, we provide a novel proof of Theorem \ref{th:LP}, which is simpler than those presented in \cite{MR3558308} and \cite{MR3693771} (independently observed by Lubetzky \cite{lubetzky}).  A more precise version of Theorem \ref{th:LP} will be proved in Proposition \ref{kant} below (it notably allows to obtain the second order term in the asymptotic development of $\Tmn(\gep)$). With our approach we can also relax the assumption by allowing the presence of $n^\alpha$ eigenvalues at a positive distance from the interval $[-\rho,\rho]$, with $\alpha \in (0,1)$ small enough, at the cost of discarding a small proportion of possibly bad starting points (the methods in \cite{MR3693771,MR3558308} only allow for $n^{o(1)}$ outlying eigenvalues, see remark below  \cite[Corollary 5]{MR3558308}). More precisely, given  $(G_n)$ a sequence of $d$-regular graphs on $n$ vertices, we define the upper semi-continuous function $I : [0,1] \to \{-\infty\} \cup [0,1]$,
which can be interpreted as an asymptotic density of eigenvalues on $\log$-$\log$ scale
\begin{equation}\label{azymp}
I(u) = \inf_{\veps \downarrow 0} \limsup_{n\to \infty} \frac{ \log \left( \sum_{\{ \gl \in \Sp (P_n) \, : \,  
||\gl|-u|<\gep\}} \mathrm{dim}(E^{\gl}_n)\right)}  {\log n}, 
\end{equation}
where $\mathrm{dim}(E^{\gl}_n)$ denotes the dimension of the eigenspace corresponding to $\gl$.

\begin{theorema}\label{goodold}
Let $\delta \in (0,1)$, $d \geq 3$ an integer and let $(G_n)$ be a sequence of  $d$-regular graphs on $n$ vertices whose  spectral radii satisfy for all $n$, $\rho_n \leq 1 - \delta$ and  for all $ u > \rho$,
 \begin{equation}\label{supoz}
 I(u) \leq 1 - 2\frac{ \log (u/\rho + \sqrt{(u/\rho) ^2 - 1})  }{ \log (d-1)}.
 \end{equation}
Then, there exists $c = c(\delta,d) >0$ such that for any $\gep\in (0,1)$ and $\eta>0$, 
\begin{equation}\label{exceptafew}
\lim_{n \to \infty}  \# \left\{x \in V_n \ : \frac{\Tmn (x,\gep)}{\log n}\ge (1+\eta)  \frac{d }{(d-2)\log (d-1)} \right\} / n^{ 1 - c \eta} = 0.
\end{equation}

\end{theorema}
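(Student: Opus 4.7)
The lower bound $\Tmn(x,\veps) \geq \frac{d\log n}{(d-2)\log (d-1)} - O(\sqrt{\log n})$ holds uniformly in $x$ by \eqref{basiclower}, so the task reduces to the matching upper bound, which I plan to prove off an exceptional set of size $n^{1-c\eta+o(1)}$. Fix $\eta, \veps \in (0,1)$, set $t_n := \lceil(1+\eta) d\log n/((d-2)\log(d-1))\rceil$, and let $M_k^{(n)}(x,y) := N_k^{(n)}(x,y)/(d(d-1)^{k-1})$ for $k \geq 1$ (with $M_0^{(n)} := I$), where $N_k^{(n)}(x,y)$ is the number of non-backtracking walks of length $k$ from $x$ to $y$ in $G_n$. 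Coupling the simple random walk on $G_n$ with its lift on the universal cover $\cT_d$ and classifying trajectories by their distance to the start yields the exact identity
\begin{equation*}
P_n^{t_n} = \sum_{k=0}^{t_n}\alpha_{t_n,k}\, M_k^{(n)}, \qquad \alpha_{t_n,k} := \bbP(D_{t_n} = k),
\end{equation*}
where $(D_s)_{s \geq 0}$ is the reflected biased walk on $\bbN$ with rightward drift $(d-2)/d$. Since $\bbE[D_{t_n}] = (1+\eta)\log n/\log(d-1) + O(1)$, a Cram\'er large-deviation estimate yields $\bbP(D_{t_n} < k_0) \leq n^{-c_0\eta^2}$ for $k_0 := \lfloor(1+\eta/2)\log n/\log(d-1)\rfloor$, reducing the problem to bounding the non-backtracking defect $\|M_k^{(n)}(x,\cdot) - \pi_n\|_{\TV}$ for most $x$ uniformly in $k \in [k_0, t_n]$.

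For this spectral estimate, I would invoke the Ihara--Bass correspondence: every non-trivial eigenvalue $a$ of $A_n := dP_n$ produces a pair of eigenvalues $\mu_\pm(a) = (a \pm \sqrt{a^2 - 4(d-1)})/2$ of the non-backtracking transfer operator $B$ on oriented edges, while all remaining eigenvalues of $B$ lie in $\{-1,+1\}$. Writing $N_k^{(n)} = S^T B^{k-1} T$ via the natural source and target incidence operators $S, T$, Parseval in an eigenbasis of $B$ together with the inequality $4\|\mu - \pi_n\|_{\TV}^2 \leq \|\mu/\pi_n - 1\|_{L^2(\pi_n)}^2$ produces, after summation over the starting vertex,
\begin{equation*}
4\sum_{x \in V_n} \|M_k^{(n)}(x,\cdot) - \pi_n\|_{\TV}^2 \leq C\, n \sum_{\mu \in \Sp(B) \setminus \{d-1\}} \frac{|\mu|^{2(k-1)}}{(d-1)^{2(k-1)}}
\end{equation*}
for a constant $C = C(d)$. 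I would then split this spectral sum into an \emph{inner} part $|a| \leq 2\sqrt{d-1}$, where $|\mu_\pm(a)| = \sqrt{d-1}$ and the ratio reduces to $(d-1)^{-(k-1)}$, and an \emph{outer} part $|a| > 2\sqrt{d-1}$. The inner part is bounded by $Cn^2(d-1)^{-(k-1)} = n^{1-\eta/2+o(1)}$ at $k = k_0$. For the outer part, partitioning $(\rho, 1-\delta]$ into $O(\log n)$ bins and writing $u/\rho = \cosh\xi_u$ so that $|\mu_+(du)| = \sqrt{d-1}\,e^{\xi_u}$, the definition \eqref{azymp} of $I$ bounds the contribution of a bin around $u$ by $n^{1+I(u)+o(1)}\,(e^{2\xi_u}/(d-1))^{k-1}$; at $k = k_0$ and under hypothesis $I(u) \leq 1 - 2\xi_u/\log(d-1)$, the exponent becomes $1 - (\eta/2)\bigl(1 - 2\xi_u/\log (d-1)\bigr) + o(1)$.

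The spectral gap hypothesis $\rho_n \leq 1-\delta$ is pivotal: it forces $\xi_u \leq \xi_{1-\delta} < \xi_1 = \tfrac{1}{2}\log(d-1)$, so that $\beta := 2\xi_{1-\delta}/\log(d-1) < 1$ uniformly in the sequence, and setting $c := (1-\beta)/2 > 0$ the outer contribution is at most $n^{1-c\eta+o(1)}$. A Markov inequality followed by a union bound over the $O(\log n)$ values of $k \in [k_0,t_n]$ then shows that the non-backtracking defect exceeds $\veps/2$ for at most $n^{1-c\eta+o(1)}$ vertices $x$; adding the uniform term $n^{-c_0\eta^2}$ completes the upper bound.

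The main technical obstacle is this outer spectral estimate, in which three scales must be balanced uniformly in $u \in (\rho, 1-\delta]$: the exponential growth $e^{k\xi_u}$ of non-backtracking powers on an outlying eigenvalue, the polynomial rarity $n^{I(u)}$ of such eigenvalues (constrained by \eqref{supoz}), and the normalization factor $(d-1)^{-k}$. Condition \eqref{supoz} is calibrated precisely so that these three scales would compete exactly at the critical value $2\xi_1 = \log(d-1)$; the strict spectral gap $\rho_n \leq 1-\delta$ converts this borderline equality (attained only at $u=1$) into a uniform strict inequality throughout $(\rho, 1-\delta]$, which is exactly what opens the polynomial margin $n^{-c\eta}$ required to accommodate the exceptional vertex set.
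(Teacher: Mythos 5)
Your overall architecture is sound and, at its core, runs on the same mechanism as the paper's proof: compare the walk on $G_n$ to its lift on $\cT_d$, reduce to the non-backtracking operators $Q_k$ at scale $k\approx (1+\eta/2)\frac{\log n}{\log(d-1)}$, bound $\sum_x \|Q_k(x,\cdot)-\pi_n\|_{\TV}^2$ spectrally, split the spectrum at $\pm\rho$, and use \eqref{supoz} together with $\rho_n\le 1-\delta$ to extract the margin $n^{-c\eta}$ (your constant $c=(1-\beta)/2$ with $\beta = 2\xi_{1-\delta}/\log(d-1)$ is exactly the paper's $c_0$, and the closing Markov-inequality argument is the paper's step \eqref{objectif}). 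Where you genuinely differ is in how you pass from "$Q_k(x,\cdot)$ is close to $\pi_n$" to a bound on $d_n(x,t)$: the paper runs the stopping time $\tau=\inf\{t: \D(\cX_t,e)=k\}$ through Proposition \ref{stoop}, which costs an extra term $3\varrho^{2s/3}$, whereas you use the exact convex decomposition $P_n^{t}=\sum_k \bbP(D_t=k)\,Q_k$ plus a Cram\'er bound on the distance process. This is a legitimate and arguably cleaner route for this particular statement (it bypasses Proposition \ref{stoop} entirely; note it still needs the monotonicity in $k$ of the spectral bound, or your union bound over $k\in[k_0,t_n]$, to handle all the $k$ charged by $D_{t_n}$). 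Two minor points: the lower-bound paragraph is not needed, since \eqref{exceptafew} only asserts an upper bound on $\Tmn(x,\gep)$; and the "$O(\log n)$ bins" treatment of $I(u)$ is not compatible with the order of limits in \eqref{azymp} (one must fix $\gep$ before letting $n\to\infty$) --- use a fixed finite cover of $[\rho,1-\delta]$ and the upper semicontinuity of $I$, as the paper does.

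The one step that does not survive scrutiny as written is the derivation of your key $\ell^2$ inequality via "Parseval in an eigenbasis of $B$". The non-backtracking transfer operator $B$ is not normal; its eigenvectors are not orthogonal, and when $\pm 2\sqrt{d-1}\in\Sp(A_n)$ it is not even diagonalizable (nontrivial Jordan blocks appear at the spectral edge), so there is no Parseval identity and no constant $C(d)$ controlling the change of basis uniformly in $n$. The inequality you are aiming for is nevertheless true up to a factor $(k+1)^2=n^{o(1)}$, but the correct route is to stay with the \emph{symmetric} operator $P_n$: the three-term recursion \eqref{eq:recQbar} gives the exact polynomial identity $Q_k=p_k(P_n)$ with the Geronimus/Chebyshev polynomials of Lemma \ref{polyns}, and the spectral theorem for $P_n$ then yields $\frac1n\sum_x\|Q_k(x,\cdot)-\pi_n\|_{\TV}^2\le \frac14\sum_{\gl\ne 1} p_k(\gl)^2\dim(E_n^{\gl})$ with $|p_k(\gl)|\le (k+1)(d-1)^{-k/2}$ on $[-\rho,\rho]$ and $|p_k(\gl)|\le C(d-1)^{-k/2}e^{k\xi_u}$ outside --- exactly the quantities your inner/outer split requires, with the polynomial factor that your Ihara--Bass bound silently drops (harmless here, but it must appear somewhere). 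With that substitution your argument closes.
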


We note that if the graph $G_n$ is transitive (that is for any pair $x,y \in V_n$, there exists an automorphism of $G_n$ which maps $x$ to $y$) then $\Tmn(x,\veps)$ does not depend on $x$,
and \eqref{exceptafew} implies that $\lim_{n\to \infty} \Tmn (\gep) / \log n =d / ((d-2)\log (d-1))$.  See Remark \ref{rq:flat0} for a variant of Theorem \ref{goodold} which allows to control $\Tmn(\gep)$  at the cost of modifying the definition of the function $I(u)$. 
The principal aim of this paper is to obtain a better understanding of this phenomenon via bringing the question to a larger setup.

\subsection*{Minimal mixing time for the anisotropic random walk.} 

A first possible extension is to consider a  random walk on $G_n$ with non-uniform jump rates. For $d \in \dN$, we set $[d] = \{1,\dots, d\}$.  One way to define a biaised random walk on  a $d$-regular graph $G_n = (V_n,E_n)$ with $ \# V_n=n$ is to assume that $E_n$ can be partitioned into $d$ sets of edges $(E_{n,i})_{i\in [d]}$ where each vertex of $V_n$ is adjacent to exactly one edge of each type (this implies in particular that $n$ is even), and to associate a transition rate $p_i$ to each type of edge with $\sum_{i\in [d]} p_i=1$.
For more generality, we consider  an involution $*  :  i \mapsto i^*$  of $[d]=\{1,\dots, d\}$. We are going to make the weaker assumption that $G_n$ is a  {\em Schreier graph}. 
This means that its adjacency matrix $P_n$ may be written as a sum of permutation matrices. That is to say that for all $x,y \in V_n$, we have
\begin{equation}\label{eq:adjG}
 P_n(x,y)=\sum_{i=1}^d S_i(x,y),
\end{equation}
where, for every $i \in [d]$, $S_i(x,y) = \IND( x = \alpha_i( y))$ for some permutation $\alpha_i$ on $V_n$ and  $S_{i*}= S^{-1}_i$.
In full generality the expression \eqref{eq:adjG}
allows for both $P_n(x,y)\ge 2$, and $P_n(x,x)\ge 1$, so that we consider graphs which may include loops and/or multiple edges. For example, if the involution $ *$ on $[d]$ is the identity, then the permutations $\alpha_i$ are involutions: for every $i \in [d]$, we have $\alpha_i ^{-1} = \alpha_{i^*} = \alpha_i$. In this case, the sets $(E_{n,i})_{i \in [d]}$ defined for every $i \in [d]$ by $E_{n,i} = \{ \{ x, \alpha_i(x)\} :  x \in V_n \}$ is a partition of the edge set $E_n$. We thus recover the above setting.

If $d$ is even, any finite $d$-regular graph is a Schreier graph  for some collection of $d/2$ permutations and their inverses
(another formulation of this result is: \textsl{any $2k$-regular graph is 2-factorable} see \cite{MR1554815}, this is now a standard exercise in graph theory and can be proved using K\"onig's Theorem for bipartite graphs, see e.g.\ \cite[Theorem 6.2.4]{MR2536865}). 

\medskip

This definition of Schreier graphs can be extended to regular graphs on countably many vertices.
Note that any {\em Cayley graph} of a finitely generated group with a symmetric set of generators of size $d$ is a Schreier graph: the natural choice for the permutations $S_i$  in \eqref{eq:adjG} corresponds to the (left or right)  multiplication by an element of the symmetric set of generators, the involution maps a generator to its inverse.

\medskip

Now we consider  $G_n$ is a $d$-regular Schreier graph with $\# V_n = n$, 
given with an involution $*$ and a decompotion of the adjacency matrix into permutations \eqref{eq:adjG},
and  $\bp=(p_1,\dots,p_d)$  a probability vector (that is, a vector whose coordinate are nonnegative and sum to one) 
we define the matrix  
\begin{equation}\label{asym}
P_{n,\bp} = \sum_{i=1}^d p_i S_i.
\end{equation}
Note that by construction $P_{n,\bp}$ is a stochastic matrix. This is the transition kernel of a random walk on $G_n$ which we refer to as the {\em {\bf p}-anisotropic random walk}. Again,  $\pi_n$, the uniform measure on $V_n$, is invariant for this process.  We are going to assume that 
\begin{equation}\label{hippo}
d\ge 3 \quad \text{ and } \quad  \forall i\in [d],   \  p_i + p_{i^*} >0.
\end{equation}
The condition $p_i + p_{i^*} >0$ is not really a restriction since it can be satisfied by just eliminating the coordinates for which $p_i + p_{i^*}=0$. The condition $d\ge 3$ (which is not the same as asking that three coordinates of $\bp$ are positive) is very natural and  
justified below Equation \eqref{lalowertG}. The singular radius of $P_{n,\bp}$ is defined as the $\ell_2$ operator norm of $P_{n,\bp}$ projected onto the orthogonal of constant functions 
 \begin{equation}\label{eq:specrad}
\sigma_{n,\bp} = \| (P_{n,\bp} )_{\ind^\perp} \|_{2 \to 2}.
\end{equation} 
Recall that the {\em singular values} of a matrix $T$ are the square of the eigenvalues of $T T^*$. By definition $\sigma_{n,\bp}$ is the second largest singular value of $P_{n,\bp}$ (we are counting eigenvalues with multiplicities, meaning that $\sigma_{n,\bp}=1$ for a non-connected graph).
If $P_{n,\bp}$ is reversible then $\sigma_{n,\bp}$ coincides with the spectral radius $\rho_{n,\bp}$, that is the second largest eigenvalue of $P_{n,\bp}$ in absolute value. 
Note that $P_{n,\bp}$ is reversible if the following condition holds:
\begin{equation}\label{eq:sympi}
\forall i\in [d], \quad p_{i^*} = p_i.
\end{equation}
Our aim is to prove a result analogous to Theorem \ref{th:LP}
for $\bp$-anisotropic walks on  Schreier graphs. We fix the involution $*$ and $\bp$ and then investigate the asymptotic behavior of the mixing time for $\bp$-anisotropic random walks on a sequence of Schreier graphs $(G_n)$ associated with the involution $*$.

\medskip

Instead of comparing the spectral radius of $P_n$ with that of the simple random walk on the $d$-regular tree, we need here to compare it with that of a $\bp$-anisotropic walk on the tree $\cT_d$ considered as a Cayley graph.
There are several natural ways to endow $\cT_d$ with a Cayley graph structure. For instance, we can consider $k$ free copies of $\dZ/2\dZ$  and $l$ free  copies of $\dZ$ with their natural generators, for any value of  $k$ and $l$ satisfying $k+2l=d$. 
We are going to choose $k$ to be equal to the number of fixed points of $*$ so that the infinite object we consider has a structure which is analogous to our finite Schreier graphs (with Definition \ref{def:covering} below we will formalize this remark).

\medskip

Using the Schreier graph structure of $\cT_d$ considered as a Cayley graph, we define in a manner analogous to \eqref{asym} the $\bp$-anisotropic random walk on $\cT_d$. We denote by $\cP_{\bp}$ its transition kernel.
These random walk have been extensively studied in the literature (see e.g.\ \cite{MR1219707, MR824708, MR1832436}).

\medskip

In analogy with \eqref{eq:AB}, in the reversible case where \eqref{eq:sympi} holds, one can asymptotically compare the spectral radius of $P_{n,\bp}$ with that of $\cP_{\bp}$.
From \cite{CECCHERINISILBERSTEIN2004735, MR1802431}, the Alon-Bopanna lower bound for the 
spectral radius states that for any sequence of Schreier graphs we have
\begin{equation}\label{lalower0}
 \liminf_{n\to \infty}\rho_{n,\bp}\ge \rho_\bp ,
\end{equation}
where $\rho_\bp$  is the spectral radius of $\cP_{\bp}$, given by the classical Akemann-Ostrand formula \cite{MR0442698}.
In the general case, a lower bound of this type holds for the singular radii of powers of $P_{n,\bp}$. More precisely, for integer $t \geq 1$, we define the {\em $t$-th singular radius} as 
\begin{equation}\label{eq:defsigt}
\sigma_{n,\bp} (t) = \| (P^t_{n,\bp} )_{\ind^\perp} \|^{1/t}_{2 \to 2} \quad \hbox{ and } \quad \sigma_{\bp} (t)  = \| (\cP^t_{\bp} ) \|^{1/t}_{2 \to 2}.
\end{equation}
We simply write $\sigma_{n,\bp}$ and $\sigma_{\bp}$ when $t=1$.
Moreover, Gelfand's formula asserts that the $t$-th singular radius converges to the  spectral radius,
\begin{equation}\label{eq:gelfand}
\lim_{t \to \infty} \sigma_{n,\bp} (t) =   \rho_{n,\bp} \quad \hbox{ and } \quad \lim_{t \to \infty} \sigma_{\bp} (t) =   \rho_{\bp}.
\end{equation}

 \medskip
Note that if  \eqref{eq:sympi} holds then for any $t \geq 1$, $\sigma_{n,\bp} (t) = \rho_{n,\bp}$ and $\sigma_{\bp} (t) = \rho_{\bp}$. 
Beware here and througout this text that the spectral radius $\rho_{\bp}$ is the spectral radius of the bounded operator $\cP_{\bp}$ on $\ell^2(\cG)$. It can differ (in fact it is larger or equal to) from what is often called in the literature the spectral radius of the walk which is the asymptotic rate of decay the return probability, that is  $\lim_{t\to \infty}\cP_{\bp}^{2t} (e,e)^{1/(2t)}$ (in the reversible case, the two notions coincide). From \cite{MR1802431}, the Alon-Bopanna lower bound claims that for any fixed $t$, for any sequence of Schreier graphs we have
\begin{equation}\label{lalowert}
 \liminf_{n\to \infty}\sigma_{n,\bp}(t)\ge \sigma_\bp(t),
\end{equation}
In particular, from Gelfand's formula \eqref{eq:gelfand}, we get
\begin{equation}\label{lalowertG}
\lim_{t \to \infty} \liminf_{n\to \infty}\sigma_{n,\bp}(t)\ge \rho_{\bp}.
\end{equation}
The latter formula can be thought as an extension of \eqref{lalower0} to the non-reversible case.
Note that our assumption \eqref{hippo} simply corresponds to assuming that $\rho_{\bp}<1$  (in the discarded cases, the anisotropic random walk on $\cT_d$ remains on a subset of the tree which is homemorphic to $\bbZ$).

Adapting the reasoning which yields \eqref{basiclower} to the 
isotropic case, we can also obtain an asymptotic lower bound in $n$ for the mixing time for the $\bp$-anisotropic random walk on $G_n$. 
  Consider $(\cX_t)_{t \ge 0}$ an anisotropic random walk on $\cT_d$ with transition kernel $\cP_{\bp}$ and starting from the root of $\cT_d$ denoted by $e$. Introduced by Avez \cite{MR0324741}, the {\em entropy rate} $\fh(\bp)$  of $\cP_{\bp}$ is defined as 
\begin{equation}\label{eq:defhp}
\fh(\bp) := \lim_{t \to \infty}  - \frac{1}{t}\sum_{g\in \cT_d} \cP_{\bp}^t  (e,g) \log \cP_{\bp}^t(e,g).
\end{equation}
We have $\fh(\bp) >0$ as soon as \eqref{hippo} holds. From Shannon-McMillan-Breiman Theorem \cite[Theorem 2.1]{MR704539}, we have almost surely 
 \begin{equation}\label{eq:SMMB}
 \lim_{t \to \infty}   \frac{\log \cP_{\bp}^t (e, \cX_t) }{t} =-\fh (\bp).
 \end{equation}
A way to interpret this convergence is to say that at large times $t$, the marginal distribution of $\cX_t$ is roughly uniform on a (deterministic) set of size $e^{\fh(\bp)t(1+o(1))}$. 
We  have chosen our setup so that we can construct the random walk on $G_n$
by taking the image of $\cX_t$ by some function $\cT_d \to V_n$ (see Definition \ref{def:covering}). Thus for any time $t>0$ and $x \in V_n$, the entropy of $P^t_{n,\bp}(x,\cdot)$ is at most the entropy of $\cP^t_{\bp}(e,\cdot)$ (details are in  Proposition \ref{lb} below). As a consequence, for any fixed $\veps \in (0,1)$, we have
\begin{equation}\label{lentroop}
 \liminf_{t\to \infty} \min_{x\in V_n} \frac{\Tm_{n,\bp}(x,1-\gep)}{\log n}\ge \frac{1}{\fh(\bp)}.
\end{equation}
In the spirit of Theorem \ref{th:LP}, for a given probability vector $\bp$, a natural question is thus the following: {\em If a sequence of graphs on $n$ vertices has a minimal asymptotic spectral radius in the sense that $\lim_{t \to \infty} \limsup_{n\to \infty} \sigma_{n,\bp} (t)= \rho_{\bp}$, does it also have an asympotic minimal mixing time in the sense that  $\lim_{n\to \infty}\frac{\Tmn(\gep)}{\log n} = ( \log n )/ \fh(\bp)  $ for any fixed $\veps \in (0,1)$?}

\medskip

It turns out that in the anisotropic setup, the relation between spectral gap and mixing time could be more subtle. We have an asymptotically minimal mixing time for the $\bp$-anisotropic random walk if the spectral radius is asymptotically minimal 
for another anisotropy vector $\bp'$.

\begin{theorem}\label{aniz}
 Let $d\geq 3$ be an integer, $*$ an involution on $[d]$ and let $\bp$ be a probability vector on $[d]$ which satisfies the condition \eqref{hippo}. Then, there exists 
 another probability vector $\bp'$ with the same support than $\bp$  such that the following holds.  If a sequence of Schreier graphs $G_n$ on $n$ vertices as in \eqref{eq:adjG} satisfies  for all integers $t\geq 1$,
\begin{equation}\label{raminudge}
\lim_{n\to \infty}\sigma_{n,\bp'}(t)= \sigma_{\bp'}(t),
 \end{equation}
  then for every $\gep\in(0,1)$
\begin{equation}\label{anicut}
 \lim_{n\to \infty} \frac{\Tm_{n,\bp}(\gep)}{\log n}=\frac{1}{\fh(\bp)}.
\end{equation}
Finally, if $\bp$ satisfies \eqref{eq:sympi} then $\bp'$ also satisfies \eqref{eq:sympi}.
\end{theorem}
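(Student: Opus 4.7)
First, the lower bound $\liminf_n \Tm_{n,\bp}(\gep)/\log n \ge 1/\fh(\bp)$ follows directly from the entropic estimate \eqref{lentroop}, so only the matching upper bound requires proof. The naive singular-radius bound \eqref{basicupper} applied to $P_{n,\bp}$ would yield the exponent $1/(2|\log \sigma_\bp|)$, strictly larger than $1/\fh(\bp)$ whenever the $\bp$-walk on $\cT_d$ is not Liouville; the role of the auxiliary vector $\bp'$ is to furnish a walk whose spectral scale matches the entropy of $\bp$ rather than its own.

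My plan is an $\ell^2$ bound combined with a path-level change of measure. Starting from
\[
4\,d_n(x,t)^2 \le n\,\|P_{n,\bp}^t(\delta_x - \pi_n)\|_2^{\,2},
\]
I would expand $P_{n,\bp}^t = \sum_{\omega\in[d]^t}\bp^{\otimes t}(\omega)\,S_{\omega_1}\cdots S_{\omega_t}$ and use the Cauchy--Schwarz inequality $|\sum_\omega \bp^{\otimes t}(\omega) a_\omega|^2 \le \sum_\omega \bp^{\otimes t}(\omega) |a_\omega|^2$ with $a_\omega = \langle S_{\omega_1}\cdots S_{\omega_t} f,g\rangle$. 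After truncating the sum to the Cram\'er-typical slab $T_t\subset[d]^t$ of paths whose empirical letter frequency lies within $o(1)$ of $\bp$ (the complement contributing $o(1)$ by a standard large-deviation bound on empirical averages) and using on $T_t$ the deterministic Radon--Nikodym estimate $\bp^{\otimes t}(\omega) \le e^{t\,\mathrm{KL}(\bp\|\bp')(1+o(1))}\,\bp'^{\otimes t}(\omega)$, this yields
\[
|\langle P_{n,\bp}^t f, g\rangle|^2 \;\le\; e^{t\,\mathrm{KL}(\bp\|\bp')(1+o(1))} \,\bigl\langle Q_{n,\bp'}^{\,t}(f\otimes \bar f), g\otimes \bar g\bigr\rangle + o(1/n),
\]
where $Q_{n,\bp'}$ is the coupled two-walk operator on $V_n\times V_n$ with kernel $Q_{n,\bp'}((x,x'),(y,y')) = \sum_i p'_i S_i(x,y)S_i(x',y')$.

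The pivotal step is to bound the operator norm of $Q_{n,\bp'}^{\,t}$ restricted to tensors orthogonal to $\ind\otimes\ind$ by $\sigma_{n,\bp'}(t)^{2t}$. This is the main obstacle: although $Q_{n,\bp'}\neq P_{n,\bp'}^{\otimes 2}$ (the two coordinates share the same generator at each step), the Schreier structure $S_iS_{i^*}=\mathrm{Id}$ should allow an identification of eigenfunctions of $Q_{n,\bp'}$ with products of eigenfunctions of $P_{n,\bp'}$ after an averaging over the diagonal, yielding the desired bound. The vector $\bp'$ is then specified, with $\supp(\bp')=\supp(\bp)$, by the rate-matching equation
\[
\mathrm{KL}(\bp\|\bp') + 2\log\rho_{\bp'} \;=\; -\fh(\bp),
\]
whose existence on the relevant simplex follows from continuity and an intermediate-value argument: the left-hand side equals $2\log\rho_\bp$ at $\bq=\bp$, which is $\ge -\fh(\bp)$ with equality precisely in the Liouville case, and can be driven strictly below $-\fh(\bp)$ by tilting $\bq$. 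Under \eqref{eq:sympi} the defining equation is $*$-symmetric, so a $*$-symmetric solution can be selected by symmetrization.

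Combining for $t = (1+\eta)(\log n)/\fh(\bp)$ with $\eta>0$ arbitrary, assumption \eqref{raminudge} (which forces $\sigma_{n,\bp'}(t)\to\sigma_{\bp'}(t)$ at finite $t$, and then $\to \rho_{\bp'}$ via \eqref{eq:gelfand}) combined with the rate-matching identity gives
\[
n\cdot\|P_{n,\bp}^t(\delta_x -\pi_n)\|_2^{\,2} \;\le\; n\cdot e^{-\fh(\bp)t(1+o(1))} + o(1) \;=\; n^{-\eta + o(1)} \to 0,
\]
which yields the upper bound and hence \eqref{anicut}. The reason \eqref{raminudge} is imposed for every $t\ge 1$ separately, rather than merely through the Gelfand limit, is that the spectral step is applied at a time $t\sim\log n$: quantitative convergence $\sigma_{n,\bp'}(t)\to \rho_{\bp'}$ at finite times of this order is essential to produce the sharp exponent $1/\fh(\bp)$.
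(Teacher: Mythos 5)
Your reduction to the upper bound is fine, and your instinct that $\bp'$ must be chosen so that a spectral rate for $\bp'$ matches the entropy of $\bp$ is the right one, but the argument fails at the step you yourself flag as pivotal, and it fails irreparably. The coupled operator $Q_{n,\bp'}=\sum_i p'_i\,S_i\otimes S_i$ preserves the diagonal $\Delta=\{(x,x)\}\subset V_n\times V_n$: since each $S_i$ is a permutation matrix, $Q_{n,\bp'}\ind_\Delta=\ind_\Delta$ and likewise $Q_{n,\bp'}^*\ind_\Delta=\ind_\Delta$, so $\ind_\Delta-\tfrac1n\ind\otimes\ind$ is a fixed vector of $Q_{n,\bp'}$ lying in $(\ind\otimes\ind)^\perp$, and the operator norm of $Q_{n,\bp'}^{t}$ on that orthogonal complement equals $1$, not $\sigma_{n,\bp'}(t)^{2t}$; no ``averaging over the diagonal'' removes an honest eigenvalue $1$. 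Worse, the test vector $f\otimes\bar f$ with $f=\delta_x-\pi_n$ has overlap $\langle f\otimes\bar f,\ind_\Delta\rangle=\|f\|_2^2=1-1/n$ with this fixed direction. Concretely, summing your display over $g=\delta_y$ turns the right-hand side into $e^{t\,\mathrm{KL}(\bp\|\bp')(1+o(1))}\langle Q_{n,\bp'}^{t}(f\otimes\bar f),\ind_\Delta\rangle=e^{t\,\mathrm{KL}(\bp\|\bp')(1+o(1))}\,\|f\|_2^2$, or equivalently $\sum_\omega\bp'^{\otimes t}(\omega)\|S_\omega f\|_2^2=\|f\|_2^2$ because every $S_\omega$ is an isometry. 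The Jensen step has discarded all cancellation among words, and the method can only yield $n\,\|P_{n,\bp}^t(\delta_x-\pi_n)\|_2^2\le n\,e^{t\,\mathrm{KL}}\cdot O(1)$, which never tends to zero (one would need $\mathrm{KL}(\bp\|\bp')<0$). This is the standard failure of naive second-moment path counting here: the entropy such a computation sees is the increment entropy $-\sum_i p_i\log p_i$ of words, not the Avez entropy $\fh(\bp)$ of positions, and the gap between the two --- created by the many words collapsing onto one group element --- is precisely what has to be exploited.

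A second, independent problem is the specification of $\bp'$: the single scalar equation $\mathrm{KL}(\bp\|\bp')+2\log\rho_{\bp'}=-\fh(\bp)$ does not in general determine $\bp'$ and, more importantly, carries no pointwise information. The paper's $\bp'$ is pinned down by the quadratic resolvent identity $\cR^1_{\bp}(x,y)=C\left(\cR^{\rho_{\bp'}}_{\bp'}(x,y)\right)^2$ for all $x,y$ (Proposition \ref{prop:ptop}), a system of $d$ coupled equations coming from the multiplicative structure of the resolvent on the tree. That pointwise identity is what allows the kernel of an induced ``backbone'' walk --- built from exit times of level sets of the Green function of $\cP_\bp$, not from a direct expansion of $P_{n,\bp}^t$ --- to be dominated entrywise by a truncated resolvent series in $P_{n,\bp'}/\rho_{\bp'}$ (Proposition \ref{prop:dom}); the hypothesis \eqref{raminudge} is then fed into that series through the localized pseudo-norm $\kappa_u$ of Lemma \ref{comparaa} together with Haagerup's inequality (Proposition \ref{prop:haag}), and one concludes with the stopping-time comparison of Proposition \ref{stoop}. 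You would need to replace your expansion-and-decoupling step by a construction of this kind.
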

The condition \eqref{raminudge} can be thought as a Ramanujan property for the anisotropic random walk with probability $\bp'$. If \eqref{eq:sympi} holds, condition \eqref{raminudge} is equivalent to $$\lim_{n \to \infty} \rho_{n,\bp'} = \rho_{\bp'}.$$ In the non-reversible case, since $\sigma_{n,\bp} (t) \geq \rho_{n,\bp}$ for any $t \geq 1$, condition \eqref{raminudge} implies that $$\limsup_{n \to \infty} \rho_{n,\bp'} \leq \rho_{\bp'}.$$ Note also that in some cases, this condition \eqref{raminudge} can be relaxed to allow for $n^{o(1)}$ singular values  outside a neighborhood of the interval $[-\rho_{\bp'}, \rho_{\bp'}]$; see Remark \ref{rq:flat1} below. 
An explicit expression for the vector $\bp'$ is provided in the proof. In particular we have that $\bp'=\bp$  in only two cases. The first one is the simple random walk: that is ${\bp}$ is the uniform vector ($p_i=1/d$ for all $i\in [d]$), and our result is thus a generalization of Theorem \ref{th:LP}. 
\noindent The other case is the totally asymetric isotropic walk. It corresponds to the case where $*$ has no fixed point and we have 
$$\forall i\in [d], \quad  p_i p_{i^*}=0 \quad \text{ and } \quad p_i+p_{i*}=\frac{2}{d}.$$
In that case we have   $\rho_{\bp}=\sqrt{2/d}$ (see \cite[Example 5.5]{MR1784419}) and $\fh(\bp)=\log\frac{d}{2}$.  
From Poincar\'e inequality \eqref{eq:poincare} below, Theorem \ref{aniz} is extremely easy to prove in this case.
For $\bp$ different from the uniform vector, a source of example for Theorem \ref{aniz} is in \cite{BC18}.  Up to the involution, we consider independent permutations $\sigma_i$ on $[n]$ vertices: if $i \ne i^*$, $\sigma_i$ is a uniform permutation on $n$ elements and, if $i^* = i$, we take $n$ even and $\sigma_i$ is a uniform  matching on $n$ elements (where a matching is an involution without fixed point). Then, in probability, the condition \eqref{raminudge} is true for any probability vector $\bp'$ which satisfies the condition \eqref{eq:sympi}.

\subsubsection*{A couple of open questions concerning anisotropic random walks}
Let us focus for simplicity on the reversible case \eqref{eq:sympi}. We emphasize again that as soon as $\bp$ is not the uniform vector, the condition \eqref{raminudge} differs from what would be the most natural generalization of the Ramanujan property in the  anisotropic setup. We call this property $\mathfrak{R}(\bp)$: 
\begin{equation}\label{raminudgenaive}
  \lim_{n\to \infty} \rho_{n,\bp}= \rho_{\bp}.
 \end{equation}
We believe that this is not an artifact of our proof and that the result would be false if \eqref{raminudge}  is replaced by \eqref{raminudgenaive}. More precisely we believe that for every $\bp$ which is not the uniform vector there should exists sequences of graph satisfying \eqref{raminudgenaive}
but such that \eqref{anicut} does not hold.
We cannot however prove that $\mathfrak{R}(\bp)$ and 
$\mathfrak{R}(\bp')$ are not equivalent. 
In fact, this question yields two natural open problems:
\begin{itemize}
 \item [(A)] With the setup described above can one find a sequence of Schreier graphs and two probability vectors  $\bp$ and $\bq$   
  satisfying \eqref{eq:sympi} such that $\mathfrak{R}(\bp)$ holds and $\mathfrak{R}(\bq)$ does not?
 \item [(B)] Given $\bp \ne  \bq$ satisfying \eqref{eq:sympi} can one find a sequence of Schreier graphs such that
  $\mathfrak{R}(\bp)$ holds and $\mathfrak{R}(\bq)$ does not?
\end{itemize}
While we believe that the answer to (B) (and hence also to (A)) is positive, to our knowledge, all known examples of graphs satisfying $\mathfrak{R}(\bp)$ for one value of $\bp$ satisfy the property for all values of $\bp$.

\subsection*{Minimal mixing time for covered random walks.}
We now present another extension of Theorem \ref{th:LP}. We start with an extension of the notion of Schreier graph beyond the case of the free group.

\begin{definition}[Group action, covering and Schreier graph]\label{def:covering}
Let $\cG$ be a finitely generated group with unit $e$ and $V$ a finite set. A map $\varphi :  \cG \times V \to V$ is an {\em action of $\cG$ on $V$}, if  we have 
\begin{equation*}
 \forall x\in V, \ \forall g,h \in \cG, \quad  \varphi(e,x) = x \  \text{ and } \ \varphi(gh,x) = \varphi(g,\varphi(h,x))
\end{equation*}
For any $g \in \cG$, we denote by $S_g$ the permutation matrix on $V$ associated to the bijective map on $V$: $x \mapsto \varphi(g,x)$.

\medskip

If $\cA$ is a finite symmetric subset of $\cG$ then the {\em Schreier graph of $(\cG,\cA,\varphi)$} is the graph (with possible loops and multiple edges) on $V$ whose adjacency matrix is 
$
\sum_{g \in \cA} S_g.
$
If $G = (V,E)$ is the Schreier graph of $(\cG,\cA,\varphi)$, we say that $(\cG,\cA)$ is a {\em covering} of $G$. 
\end{definition}

Let us check that this definition of a Schreier graph is equivalent to that given earlier. If the adjacency matrix of a finite graph $G = (V,E)$ is of the form \eqref{eq:adjG}, then $G$ is the Schreier graph of $(\cS_V,\cA,\varphi)$ where $\cS_V$ is the symmetric group on $V$, $\cA = (S_1, \ldots, S_d)$  and the covering map is $\varphi ( \sigma, x) = \sigma(x)$. Conversely, if $G $ is the Schreier graph of $(\cG,\cA,\varphi)$ as in Definition \ref{def:covering} with $\cA = \{ a_1, \ldots, a_d\}$ then its adjacency matrix is of the form \eqref{eq:adjG} where the involution $* : i \mapsto i^*$ is defined as $i^* = j$ if and only if $a_j = a_i^{-1}$. Note that if the involution $*$ on $[d]$ has $q_1$ fixed points and $q_1+q_2$ equivalence classes then $G$ is $d$-regular with $d = q_1 + 2q_2$. As already pointed, the infinite tree $\cT_d$ is the Cayley graph of the group $\cG^{(q_1,q_2)}_{\mathrm{free}}$ generated by $q_1$ free copies  of $\dZ$ and $q_2$ free  copies  of  $\dZ/2\dZ$  with their natural generators denoted $\cA_{\mathrm{free}}$. By considering the group homomorphism from $\cG^{q_1,q_2}_{\mathrm{free}}$ to $\cG$ which maps  $\cA_{\mathrm{free}}$ to $\cA$, we deduce that {\em all Schreier graphs are covered by $(\cG^{q_1,q_2}_{\mathrm{free}},\cA_{\mathrm{free}})$} with the corresponding involution.

\begin{remark}
 Note that if $\bp$ is fixed, the definition of the $\bp$-anisotropic random walk  $\cT_d$ is the same (up to graph isomorphism) for all possible values of $q_1$ and $q_2$. However, the choice of the group structure we endow $\cT_d$ with turns
 out to be of importance when considering coverings. As the groups corresponding to different values of $q_1$ and $q_2$ are not isomorphic, 
 there are $d$-regular graphs $G$ that can be expressed as the Schreier graph for $\cG^{(q_1,q_2)}_{\mathrm{free}}$ (with $q_1+2q_2=d$) for some values of $q_2$ and not for others (more precisely it is harder to find a covering for smaller values of $q_2$).
\end{remark}

The standard example of an action on a finite set is the following. Let $\cG$ be a finitely generated group and $H$ a  subgroup of $\cG$ with a finite index. Then the set  of left cosets $V = \{ g H : g \in \cG\}$ (with $gH = \{ gh  : h \in H\}$) is a finite set and $\varphi$ defined by $\varphi(a,bH) = ab H$ is an action of $\cG$ on $V$.

\medskip

We introduce now a notion of anisotropic walk on a (sequence of) Schreier graph. 
Fixing a group $\cG$ we assume that we have a sequence of finite sets $(V_n)$ with $\# V_n = n$ and $(\varphi_n)$ a sequence of actions of $\cG$ on $V_n$. We consider $\bp$ to be a  probability vector
$\cG$ with finite support. 
We are interested in the random walk on $V_n$ with transition matrix
\begin{equation}\label{eq:defPnp}
P_{n,\bp} = \sum_{g\in \cA} p_g S_g,
\end{equation}
where for $g \in \cG$,  $ S_g$ is as in Definition \ref{def:covering} the permutation matrix associated to the action $\varphi_n$. 
Note that if the support of $\bp$ is contained in a finite symmetric  set $\cA$, then $P_{n,\bp}$ is an anisotropic random walk on  the Schreier graph of $(\cG,\cA,\varphi_n)$. This situation extends the previous setup in both directions: the underlying group is not necessarily the free group and the generating set is not necessarily the natural set of generators. Note that the uniform measure on $V_n$ is always stationnary for this random walk. It is reversible if we assume 
\begin{equation}\label{reversib}
\forall g\in \cG, \ p_g=p_{g^{-1}}.
\end{equation}
We are going to compare the random walk on $V_n$ with a random walk on $\cG$. To this end, we define $\cP_{\bp}$ the transition kernel of this random walk on $\cG$ defined by
 \begin{equation}\label{eq:defPp}
\cP_{\bp}=\sum_{g \in \cA }p_g\lambda(g),
\end{equation}
where, for $g \in \cG$, $\lambda(g)$ is the left multiplication operator in (or the left regular representation of $g$) defined  on $\cG$ by  $\lambda(g)(h) = gh.$  

\medskip

Our aim is to provide an extension of Theorem \ref{aniz} which gives a condition in terms of spectral properties for the mixing time to be minimal.

\medskip

Let $\rho_{\bp}$  and $\sigma_{\bp}$ be the spectral radius and the singular radius of $\cP_{\bp}$ and let $\sigma_{n,\bp}$ be the singular radius of $P_{n,\bp}$ defined in \eqref{eq:specrad}. For integer $t \geq 1$, we define $\sigma_{n,\bp} (t)$ and $\sigma_{n,\bp} (t)$ as in \eqref{eq:defsigt}. From \cite{MR1802431}, the Alon-Boppana lower bound \eqref{lalowertG} is still valid.

\medskip

We wish to focus on sequences of random walks whose spectral gap is uniformly bounded away from one. Hence, we will assume that $\rho_{\bp} < 1$. This is equivalent to assuming that the subgroup $\langle \cA_\bp \rangle$ of $\cG$ generated by $\cA_{\bp}:=\{ g \ : \ p_g>0\}$ is   \textit{non-amenable} (or simply that $\cG$ is non-amenable if one takes as an assumption that $\cA_{\bp}$ generates $\cG$). 
Recall that a group  is said to be \textit{amenable} if it admits a finitely-additive left-invariant probability measure. The equivalent between non-amenability of $\langle \cA_\bp \rangle$ and $\rho_{\bp} < 1$ for $\bp$ 
satisfying \eqref{reversib} was established  by Kesten \cite{MR112053, MR0109367}. In the non-reversible case, see forthcoming Lemma \ref{le:KL}.

\medskip

As before, we can determine an asymptotic lower bound for the mixing time of the random walk with generator $P_{n,\bp}$ (valid for any sequence of group actions) in terms of 
the entropy rate of $\cP_{\bp}$ denoted by $\fh(\bp)$  and defined by Equation \eqref{eq:defhp}. 
In Subsection \ref{subsec:entropiclb} below, we will check that the Avez lower bound $\fh(\bp) \geq - 2 \log \rho_{\bp}$ holds and that  the mixing time of the anisotropic random walk on $G_n$ is asymptotically larger than $(1-o(1))(\log n)/\fh(\bp)$ in the sense  that for any fixed $\veps \in (0,1)$, uniformly in $x\in V_n$, the inequality \eqref{lentroop} holds.

\medskip

For a given probability vector $\bp$ supported by a generator, a natural question is thus the following: {\em Are there spectral conditions for a sequence of actions $(\varphi_n)$ of $\cG$ on $V_n$ with $\# V_n  = n$ to guarantee that the anisotropic random walk on $V_n$ has an asymptotically minimal mixing time in the sense that  $\Tmn(\gep) = (1+ o(1)) ( \log n )/ \fh(\bp)  $ for any fixed $\veps \in (0,1)$?}

\noindent Our answer to this question is based on the two following notions of group algebra.

\begin{definition}[RD property]
A finitely generated group $\cG$ has the RD property (for Markov operators) if for every finitely supported probability vector ${\bf p}$   
the singular radius $\sigma_{\bf p} = \| \cP_{\bp} \|_{2 \to 2}$ of $\cP_{\bp}$  is well controlled by the $\ell^2$-norm of ${\bf p}$ in the following sense :  For any finite symmetric generating set $\cA$ of $\cG$, there exists a constant $C = C(\cG,\cA)$ such that 
\begin{equation}\label{eq:RD}
\sigma_{\bf p} \leq C R^{C}\| \bp \|_2,
\end{equation}
where $R$ is the diameter of the support of $\bp$ in the Cayley graph associated with $(\cG,\cA)$.
\end{definition}

We refer to \cite{MR3666050} for an introduction to the RD property. Among non-amenable groups, we note that free groups and hyperbolic groups satisfy the RD property. Observe also that as the distance corresponding to different generating sets are comparable (if $d_{\cA}$ and $d_{\cA'}$ are the graph distance for the Caley graph associated with generators $\cA$ and $\cA'$ respectively we have $d_{\cA'}\le C_{\cA,\cA'} d_{\cA}$ where $C_{\cA,\cA'}=\max_{y\in \cA'} d_{
\cA}(e,y)$) it is sufficient to check \eqref{eq:RD} for a single finite symmetric generating set $\cA$ of $\cG$.

\medskip

Recall that we automatically have $\sigma_{\bf p} \leq \| \bp \|_1$, and hence \eqref{eq:RD} is trivially satisfied when $\| \bp \|_2\ge C R^{-C}\| \bp \|_1.$ Therefore, the condition \eqref{eq:RD}
says something about $\sigma_{\bp}$ for $\bp$ which have large support and whose mass is well spread on that support.
In fact, as we have $\sigma_{\bf p}\ge \| \bp \|_2$,
the property \eqref{eq:RD} for a non-amenable group tells us in particular that $\sigma_{\bf p}$ is close to this trivial lower bound (in the sense that $\sigma_{\bf p}= \| \bp \|_2^{1+o(1)}$) when $\bp$ is reasonably spread-out on the ball of radius $R$ for large values of $R$ (recall that a non-amenable group has exponential growth). 
\medskip

Our second notion is the {\em strong  convergence of operators algebras} which we define here in our specific Markovian setting. It can be thought as an analog of the Ramanujan property for a sequence of group actions on finite sets.
It is a stronger assumption since the Ramanujan property only refers to one particular random walk on the free group and the property below must be  valid for every random walk.

\begin{definition}[Strong convergence] \label{sc}
Let $\cG$ be a finitely generated group, $(V_n)$ a sequence of finite sets and $(\varphi_n)$ a sequence of covering maps of $\cG$ on $V_n$. We say that the sequence of  covering maps $(\varphi_n)$ converges  strongly (on Markov operators) if for every finitely supported probability vector $\bp$, we have 
$$
\lim_{n \to \infty} \sigma_{n,\bp} = \sigma_\bp.
$$
where $\sigma_{\bp}$ is the singular radius of $\cP_{\bp}$ defined in \eqref{eq:defPp} and  $\sigma_{n,\bp}$ is the singular radius of $P_{n,\bp}$ defined in \eqref{eq:defPnp} and \eqref{eq:specrad}. 
\end{definition}
\noindent From \eqref{eq:gelfand}, the strong convergence of $(\varphi_n)$ implies in particular that 
\begin{equation}\label{consequence}
\lim_{t \to \infty} \lim_{n \to \infty} \sigma_{n,\bp} (t)  = \rho_{\bp}.
\end{equation}
We are now ready to state the last result of this introduction.

\begin{theorem}\label{thegeneral}
Let $\cG$ be a finitely generated non-amenable group with the property RD,  $(V_n)$  a sequence of finite sets with $\# V_n = n$ and $(\varphi_n)$ a sequence of actions of $\cG$ on $V_n$ which converges strongly. Then for every  finitely supported probability vector $\bp$ on $\cG$ such that $\rho_{\bp} < 1$, 
the mixing time of the random walk with transition matrix $P_{n,\bp}$  satisfies, for every $\gep\in (0,1)$,
 \begin{equation*}
  \lim_{n\to \infty} \frac{\Tm_{n,\bp}(\gep)}{\log n}=\frac{1}{\fh(\bp)}.
 \end{equation*}
\end{theorem}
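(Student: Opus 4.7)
The lower bound $\liminf_n \Tm_{n,\bp}(\gep)/\log n \geq 1/\fh(\bp)$ was announced in \eqref{lentroop} and will be established in Subsection \ref{subsec:entropiclb}; it lifts the walk $P_{n,\bp}$ through the action $\varphi_n$ to the walk $\cP_{\bp}$ on $\cG$ and invokes the Shannon--McMillan--Breiman theorem, with no use of either the RD hypothesis or strong convergence. The content of the theorem is therefore the matching upper bound $\limsup_n \Tm_{n,\bp}(\gep)/\log n \leq 1/\fh(\bp)$ for every $\gep\in(0,1)$. I would work with the chi-square distance $\chi^2_n(t,x) := n\|P_{n,\bp}^t(x,\cdot)\|_2^2 - 1$, which controls total variation via $4\, d_n(x,t)^2 \leq \chi^2_n(t,x)$; it suffices to prove that for every $\delta>0$, $\chi^2_n(t_n,x)\to 0$ uniformly in $x$, where $t_n = \lceil (1+\delta) \log n / \fh(\bp) \rceil$.

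The backbone of the argument is the $\chi^2$-contraction inequality
\[
\chi^2_n(s+r,x)\leq \sigma_{n,\bp}(r)^{2r}\,\chi^2_n(s,x),
\]
valid in the reversible case by self-adjointness on $L^2(\pi_n)$ and in general through the identity $\|(P_{n,\bp}^*)^r|_{\ind^\perp}\|_{2\to 2}=\|P_{n,\bp}^r|_{\ind^\perp}\|_{2\to 2}$. I would decompose $t_n = s_n + r_n$ where $s_n = \lfloor(1-\eta)\log n/\fh(\bp)\rfloor$ lies below the entropic time and $r_n = m_n R$ with $R$ a large but fixed integer and $m_n\to\infty$. Since $\sigma_{n,\bp}(mR) \leq \sigma_{n,\bp}(R)$ by submultiplicativity, and strong convergence applied to the $R$-fold convolution of $\bp$ yields $\sigma_{n,\bp}(R)\to \sigma_{\bp}(R)$, Gelfand's formula \eqref{eq:gelfand} gives $\sigma_\bp(R)\to\rho_\bp$, and hence, after fixing $R$ large,
\[
\sigma_{n,\bp}(r_n)^{2r_n}\leq e^{-2r_n\log(1/\rho_\bp)(1-o(1))}.
\]

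The heart of the proof, and the step I expect to pose the main obstacle, is an entropic $L^2$ estimate of the form
\[
\|P_{n,\bp}^{s_n}(x,\cdot)\|_2^2 \leq s_n^{O(1)}\, e^{-s_n \fh(\bp)} \qquad \text{uniformly in } x \in V_n,
\]
which gives $\chi^2_n(s_n,x) \leq n^{\eta + o(1)}$. This transfers to $V_n$ the Shannon--McMillan $L^2$-concentration $\|\cP_\bp^{s}(e,\cdot)\|_2^2 \asymp e^{-s \fh(\bp)}$ for the walk on $\cG$. The difficulty is that the naive pushforward formula $P_{n,\bp}^s(x,y)=\sum_{g:\varphi_n(g,x)=y}\cP_\bp^s(e,g)$ only produces a lower bound on $\|P_{n,\bp}^s(x,\cdot)\|_2^2$ in terms of $\|\cP_\bp^s(e,\cdot)\|_2^2$, while the crude spectral bound $\sigma_{n,\bp}(s)^{2s}(n-1)$ loses the factor $\fh(\bp)/(2\log(1/\rho_\bp)) \geq 1$ quantified by Avez's inequality. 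The RD property is essential here: it controls $\|\cP_\bp^s\|_{2\to 2}$ by $\|\cP_\bp^s(e,\cdot)\|_2$ up to polynomial corrections in $s$, and combined with strong convergence on iterated convolutions it allows one to transfer the required entropic $L^2$-decay from $\cG$ to $V_n$.

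Combining the two estimates, for $t_n = s_n + r_n = (1+\delta)\log n/\fh(\bp)$,
\[
\chi^2_n(t_n,x) \leq e^{-2r_n\log(1/\rho_\bp)(1-o(1))}\, n^{\eta+o(1)} = n^{\,\eta - 2(\delta+\eta)\log(1/\rho_\bp)/\fh(\bp) + o(1)}.
\]
Avez's inequality $\fh(\bp)\geq 2\log(1/\rho_\bp)$ ensures that for sufficiently small $\eta = \eta(\delta,\bp)>0$ the exponent is strictly negative, which yields $\chi^2_n(t_n,x) \to 0$ and therefore the desired upper bound on the mixing time.
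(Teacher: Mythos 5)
Your proposal breaks at the step you yourself flag as the crux: the ``entropic $L^2$ estimate'' $\|P_{n,\bp}^{s}(x,\cdot)\|_2^2 \leq s^{O(1)}e^{-s\fh(\bp)}$ is false, and no amount of work with the RD property can rescue it, because the premise behind it --- that $\|\cP_\bp^{s}(e,\cdot)\|_2^2 \asymp e^{-s\fh(\bp)}$ on $\cG$ --- is already wrong. The Shannon--McMillan--Breiman theorem controls $\cP_\bp^s(e,g)$ only for \emph{typical} $g$, i.e.\ on a set carrying most of the mass; the $\ell^2$ norm is instead dominated by the atypically likely points near the origin. Ironically, the RD property itself forces this: from \eqref{eq:RD} applied to the probability vector $\cP_\bp^s(e,\cdot)$, whose support has diameter $O(s)$, one gets $\|\cP_\bp^s(e,\cdot)\|_2 \geq \sigma_\bp(s)^s / (C s^C) \geq \rho_\bp^s/(Cs^C)$. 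By Avez's inequality $\fh(\bp) \geq 2\log(1/\rho_\bp)$, which is \emph{strict} in general (already for the simple random walk on $\cT_3$: $\rho^2 = 8/9 > 2^{-1/3} = e^{-\fh}$), we have $\rho_\bp^{2s} \gg e^{-s\fh(\bp)}$. Since the pushforward identity gives $\|P_{n,\bp}^t(x,\cdot)\|_2^2 \geq \|\cP_\bp^t(e,\cdot)\|_2^2$, it follows that $\chi^2_n(t_n,x) \gtrsim n^{1-(1+\delta)\beta}/\mathrm{poly}(\log n)$ with $\beta := 2\log(1/\rho_\bp)/\fh(\bp) < 1$, which \emph{diverges} for small $\delta$. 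So the obstruction is not a missing lemma but a structural one: whenever Avez's inequality is strict, the chi-square distance at the entropic time does not tend to zero, and no rearrangement of upper bounds within your scheme can succeed. (This also explains why the exponent you compute at the end only closes thanks to the unproved $n^{\eta+o(1)}$ input.) Your lower-bound discussion, the contraction inequality via $\sigma_{n,\bp}(r)$, and the use of strong convergence on the $R$-fold convolution are all correct but do not touch this issue.

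The repair --- and this is what the paper does --- is to apply the $\ell^2$ bound not to $P_{n,\bp}^{t}(x,\cdot)$ but to the law of the walk \emph{stopped} at a suitable stopping time, which is genuinely flat at scale $k^{-1}$. One takes $\tau$ to be the exit time of the set $\cU = \{g : \cR_\bp(e,g) > 1/k\}$ defined through the Green's function; then $\cX_\tau$ is supported on $\partial\cU$ with all atoms at most $1/k$ and support of diameter $O(\log k)$ (Proposition \ref{le:UnifU}), so the RD property bounds the singular radius of the induced backbone kernel $\cQ = \cP_\bq$ by $k^{-1/2+o(1)}$, strong convergence transfers this to $Q_n$ on $V_n$, and the entropic estimate $\dE[\tau] \leq (1+\veps)\log k/\fh(\bp)$ (Lemma \ref{le:tau}) is what identifies the entropic time after $\approx \log n/\log k$ backbone steps. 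Proposition \ref{stoop} then converts equilibration of the stopped walk into equilibration of $P_{n,\bp}^{t+s}(x,\cdot)$, bypassing the divergent chi-square of the unstopped walk entirely.
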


The assumption that the group actions converge strongly is a strong assumption. Notably, Theorem \ref{thegeneral} does not imply neither Theorem \ref{th:LP} nor Theorem \ref{aniz}. These two theorems rely on special properties of free groups.  Nevertheless, in some cases, it is possible to relax the assumption that the group actions converge strongly by supposing instead that the strong convergence holds on some vector spaces of  codimension $n^{o(1)}$ (we  discuss this point  further in Remark \ref{rq:flat2}). 

The paper \cite{BC18} provides a source  of examples for Theorem \ref{thegeneral} by establishing that random actions of the free group are strongly convergent. We consider an involution $*$ in $[d]$ with $q_1$ fixed points, and $\cG^{(q_1,q_2)}_{\mathrm{free}}$  the  group generated by $q_1$ free copies of $\bbZ/2\bbZ$ and $q_2=(d-q_1)/2$ free copies of $\bbZ$ with its natural set of generators. We consider permutations $\alpha_{n,i}$, $i\in[d]$ on $[n]$ vertices which are chosen such that: 
\begin{itemize}
\item [(A)] If $i \ne i^*$, $\alpha_{n,i}$ is a uniform permutation on $n$ elements and $\alpha_{n,i^*}=\alpha^{-1}_{n,i}$.
\item [(B)] If $i=i^*$,   $\alpha_{n,i}$ is a uniform involution on $n$ elements without fixed point (the construction is made only for even $n$).
\item[(C)] The permutations are chosen independently for each equivalence class of the involution $*$.
\end{itemize}

 We consider the action of $\cG^{(q_1,q_2)}_{\mathrm{free}}$ on $V_n =[n]$ defined by $\varphi_n( a_i, x)  = \alpha_{n,i} (x)$. Then, in probability, this sequence of actions is strongly convergent. These random actions on the free group are the only known examples of strongly convergent sequences of actions, but it
could indicate that choosing the action at random among all possible choices might yield a strongly convergent sequence also for other choices of non-amenable groups.

\medskip

\subsection*{Minimal mixing time for color covered random walks.} 
Finally, we also consider yet another extension which allows in particular to consider random walks on $n$-lifts of a base graph (not necessarily regular). Since the work of Amit and Linial \cite{MR1883559,MR2216470} and Friedman \cite{MR1978881},  this model has attracted a substantial attention. In this context, we will give the analog of Theorem \ref{thegeneral}.  To avoid any confusion on notation, we postpone the treatment of this model to Section \ref{sec:lifted}.

\subsection*{Organization of the paper}

 In Section \ref{prel} we provide a short proof for the entropic lower bound \eqref{lentroop} only stetched in this introduction,
 and provide a general result (Proposition \ref{stoop}) which allows to estimate the mixing time of a Markov chain in terms of the distribution of a stopping time at which the chain is close to equilibrium.

In Section \ref{revisited}, we provide a simple proof of Theorem \ref{th:LP}/\ref{goodold}, proving and using a relation between the
$k$-non-backtracking random walk on trees and Chebychev polynomials.

 In Section \ref{sec:thegeneral}, we prove Theorem \ref{sec:thegeneral} concerning cutoff in the more general setup under the assumption of Strong Convergence (Definition \ref{sc}).
 
 In Section \ref{sec:aniz}, we prove Theorem \ref{aniz} concerning anisotropic walks, by combining the ideas of Section \ref{sec:thegeneral} with an analysis of the resolvent of the anisotropic random walk on $\cT_d$.

Finally, in Section \ref{sec:lifted} we deal with the model of color covered random walks.

\subsection*{Notation}
 If $V$ is a countable set and $M$ is a bounded operator in $\ell^2 (V)$, we use the matrix notation $M(x,y) = \langle \ind_x , M \ind_y\rangle$ for $x,y \in V$ where $\ind_x $ denotes the indicator function of $x$. The integer part of real number $t$ is denoted by $\lfloor t \rfloor $.

\subsection*{Acknowledgment} 
The authors are grateful for the   detailed comments of anonymous referees on a first draft of this work which lead to substancial improvement. 
CB warmly thanks Nalini Anantharaman and Justin Salez for enlightening discussions on anisotropic random walks. HL acknowledges support from a Productivity Grant from CNPq and by a JCNE grant from FAPERj.  CB is supported by French ANR grant ANR-16-CE40-0024-01.

\section{Preliminaries}\label{prel}

\subsection{The entropic time lower bound}
\label{subsec:entropiclb}

For the sake of completeness we provide a complete proof of the entropic lower bound \eqref{lentroop}  which is only sketched in the introduction.
The result is stated in the more general setup of Theorem \ref{thegeneral}.
Recall that $\cG$ is a finitely generated group, 
$(V_n)_{n\ge 0}$ a sequence of finite sets with $\# V_n=n$, $(\varphi_n)_{n\ge 0}$ a sequence of actions of $\cG$ on $V_n$,
that $P_{n,\bp}$, $\cP_{\bp}$ denote the transition matrices on $V_n$ and $\cG$ respectively defined by \eqref{eq:defPnp} and\eqref{eq:defPp}, 
and that $\fh(\bp)$ is the entropy rate associated with $\cP_{\bp}$.

\begin{proposition}\label{lb}
Let  $\bp$ be a finitely supported probability vector on $\cG$ such that $\fh(\bp) > 0$.  Given any sequence $(V_n)$, $(\varphi_n)_{n\ge 0}$ as above, 
the mixing time associated with the random walk on $V_n$ with transition $P_{n,\bp}$ satisfies for any $\gep\in(0,1)$
\begin{equation*}
 \liminf_{n} \min_{x \in V_n} \frac{\Tm_{n,\bp}(x,1-\gep)}{\log n}\ge \frac 1 {\fh(\bp)}.
\end{equation*}
\end{proposition}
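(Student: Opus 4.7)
The plan is to exploit the fact that the random walk on $V_n$ is a deterministic projection of a random walk on $\cG$, and then to combine the Shannon--McMillan--Breiman theorem with a pigeonhole/measure comparison argument.

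First, I would fix $x_0\in V_n$ and construct the walk $(X_t)_{t\ge 0}$ with $X_0=x_0$ and transition matrix $P_{n,\bp}$ by setting $X_t=\varphi_n(\cX_t,x_0)$, where $\cX_t=g_1\cdots g_t$ and $(g_i)_{i\ge 1}$ are i.i.d.\ with law $\bp$. Since $g\mapsto\varphi_n(g,x_0)$ is a measurable map $\cG\to V_n$, this yields the key identity
\begin{equation*}
P^t_{n,\bp}(x_0,y)=\sum_{g\in\cG\,:\,\varphi_n(g,x_0)=y}\cP^t_{\bp}(e,g),\qquad y\in V_n,
\end{equation*}
so that $P^t_{n,\bp}(x_0,\cdot)$ is the pushforward of $\cP^t_{\bp}(e,\cdot)$.

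Next, I would use \eqref{eq:SMMB}: for any $\delta>0$, there exists $t_0=t_0(\delta,\bp)$ such that for every $t\ge t_0$,
\begin{equation*}
\cP^t_{\bp}\bigl(e,\,A_t\bigr)\ge 1-\delta,\qquad\text{where}\quad A_t:=\bigl\{g\in\cG\,:\,\cP^t_{\bp}(e,g)\ge e^{-t(\fh(\bp)+\delta)}\bigr\}.
\end{equation*}
A direct sum bound gives $\# A_t\le e^{t(\fh(\bp)+\delta)}$. Let $B_t:=\varphi_n(A_t,x_0)\subset V_n$. By the pushforward identity, $P^t_{n,\bp}(x_0,B_t)\ge\cP^t_{\bp}(e,A_t)\ge 1-\delta$, while $\#B_t\le\#A_t\le e^{t(\fh(\bp)+\delta)}$ and hence $\pi_n(B_t)\le e^{t(\fh(\bp)+\delta)}/n$.

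Now I would pick $t=t_n:=\lfloor(1-\eta)(\log n)/\fh(\bp)\rfloor$ for some $\eta\in(0,1)$ and $\delta<\eta\,\fh(\bp)/2$. Then $\pi_n(B_{t_n})\le n^{-\eta/2}\to 0$, whereas $P^{t_n}_{n,\bp}(x_0,B_{t_n})\ge 1-\delta$. Using the variational formula in \eqref{eqdist},
\begin{equation*}
d_n(x_0,t_n)\ge P^{t_n}_{n,\bp}(x_0,B_{t_n})-\pi_n(B_{t_n})\ge 1-\delta-o(1),
\end{equation*}
which, since $\delta<\gep$ for small enough $\delta$, implies $\Tm_{n,\bp}(x_0,1-\gep)>t_n$ for all $n$ large. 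Taking $\eta\downarrow 0$ yields the claim, and since the bound is uniform in $x_0$ (the SMB set $A_t$ only depends on $\cP_{\bp}$), we obtain the minimum over $x\in V_n$.

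The only mildly subtle point is that the Shannon--McMillan--Breiman statement quoted in \eqref{eq:SMMB} is an almost sure convergence on $\cG$, but we only need the convergence in probability of $-\frac{1}{t}\log\cP^t_{\bp}(e,\cX_t)$ to $\fh(\bp)$, which is immediate from the a.s.\ statement; I would simply note that \eqref{eq:SMMB} implies the existence of $A_t$ with the stated mass and cardinality bounds. No step is genuinely hard; the main thing to be careful about is that the entropic set $A_t$ is built intrinsically on $\cG$, so the cardinality bound on its image $B_t$ transfers uniformly to every starting point $x_0\in V_n$.
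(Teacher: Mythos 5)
Your proof is correct and follows essentially the same route as the paper's: realize the walk on $V_n$ as the image under $\varphi_n(\cdot,x_0)$ of the walk on $\cG$, use Shannon--McMillan--Breiman to produce a set of cardinality $e^{t(\fh(\bp)+o(1))}$ carrying mass $\ge 1-\delta$, push it forward, and compare with $\pi_n$ at time $t_n\approx(1-\eta)(\log n)/\fh(\bp)$. The only differences from the paper's argument are cosmetic choices of parameters.
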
 \noindent
We consider $\Tm_{n,\bp}(x,1-\gep)$ (rather than $\Tm_{n,\bp}(x,\gep)$) when lower bounds on the mixing time are concerned so that for both the upper and the lower bound, it is sufficient to consider small values of $\gep$.
\begin{proof}
Let $(\cX_t)$ denote the random walk on $\cG$ starting from the unit $e$ and with transition $\cP_{\bp}$. Its distribution is denoted by $\bbP$. 
The result is an almost direct consequence of the
Shannon-McMillan-Breiman Theorem \cite[Theorem 2.1]{MR704539},
which states that
$\log \cP_{\bp}(e,\cX_t)$ concentrates around its mean; see \eqref{eq:SMMB}. In particular 
given $\gep,\delta>0$ we have for all $t$ sufficiently large 
$$\bbP\big[   \log \cP^t_{\bp}(e,\cX_t) < -(1+\delta) \fh(\bp) t \big]\le \gep/2.$$
Thus if one sets 
$$\mathcal V_{\delta}(t):=\left\{ g\in \cG \,  : \, \cP_{\bp}^t(e,g)\ge e^{-(1+\delta) \fh(\bp)  t} \right\}$$
we have
\begin{equation*}
 |\mathcal V_{\delta}(t)|\le e^{(1+\delta) \fh(\bp)  t} \quad \text{ and} \quad  \bbP[ \cX_t \notin \mathcal V_{\delta}(t) ]\le \gep/2.
\end{equation*}
Now given $x\in V_n$ arbitrary, we consider $X_t:= \varphi_n(\cX_t,x)$, which is a random walk with transition
matrix $P_{n,\bp}$ started at $x$, and let  
$V_{\delta}(t):=\left\{  \varphi_n(g,x) \ : \ g \in \mathcal V_{\delta}(t)\right\},$ the image of $\mathcal V_{\delta}(t)$ by the action.
We have, for all $t$ sufficiently large 
$$\pi_n(V_{\delta}(t))=\frac{|V_{\delta}(t)|}{n}\le \frac{|\mathcal V_{\delta}(t)|}{n}\le  \frac{e^{(1+\delta) \fh(\bp)  t} }{n},$$
and $P^t_{n,\bp}(x, V_{\delta}(t) ) = \dP ( X_t \in V_\delta (t) )  \ge 1- \gep/2$.
Thus we have
\begin{equation*}
 \|P^t_{n,\bp}(x,\cdot)-\pi_n\|_{\TV}\ge 1-\gep/2-  e^{(1+\delta)\fh(\bp)  t} / n.
\end{equation*}
Considering $t= \lfloor  \log (n\gep/2) / ( (1+\delta) \fh(\bp)) \rfloor$, we conclude that for any arbitrarily small and $\gep,\delta>0$, we have for $n$ sufficiently large
$$\Tm_{n,\bp}(x,1-\gep)\ge \lfloor  \log (n\gep/2) / ( (1+\delta) \fh(\bp)) \rfloor\ge (1-\delta)\frac {(\log n)} {\fh(\bp)} .$$ 
It concludes the proof. \end{proof}

The next lemma is the classical Avez lower bound adapted to our definition of spectral radius. It implies notably that if $\rho_\bp < 1$ then $\fh(\bp) > 0$. 
\begin{lemma}\label{le:hrho}
If $\bp$ is a finitely supported probability vector on $\cG$ we have $\fh(\bp) \geq - 2 \log \rho_\bp$.
\end{lemma}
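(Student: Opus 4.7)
The plan is to follow the standard Avez argument, adapted to the possibly non-reversible setting by using the operator norm on $\ell^2(\cG)$ and Gelfand's formula rather than a self-adjoint functional calculus. Set $p_t(g) := \cP_{\bp}^t(e,g)$ and denote by $H(p_t) = -\sum_{g\in\cG} p_t(g)\log p_t(g)$ the Shannon entropy, so that by definition $\fh(\bp) = \lim_{t\to\infty} H(p_t)/t$.

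First I would apply Jensen's inequality to the convex function $-\log$, viewing $p_t(\cX_t)$ as a random variable under the law of the walk $(\cX_t)$ starting from $e$:
\begin{equation*}
H(p_t) = \bbE[-\log p_t(\cX_t)] \geq -\log \bbE[p_t(\cX_t)] = -\log \sum_{g\in\cG} p_t(g)^2 = -\log \| p_t \|_2^2.
\end{equation*}
Next I would identify $p_t$ as the image of $\ind_e$ under $\cP_{\bp}^t$, so that
\begin{equation*}
\| p_t \|_2 = \| \cP_{\bp}^t \ind_e \|_2 \leq \| \cP_{\bp}^t \|_{2\to 2}\, \| \ind_e\|_2 = \| \cP_{\bp}^t \|_{2\to 2} = \sigma_{\bp}(t)^t,
\end{equation*}
where $\sigma_{\bp}(t)$ is the $t$-th singular radius defined in \eqref{eq:defsigt}.

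Combining the two inequalities gives $H(p_t) \geq -2 t \log \sigma_{\bp}(t)$. Dividing by $t$, taking $t\to\infty$, and invoking Gelfand's formula \eqref{eq:gelfand} which states $\sigma_{\bp}(t)\to \rho_{\bp}$, we obtain
\begin{equation*}
\fh(\bp) = \lim_{t\to\infty} \frac{H(p_t)}{t} \geq \lim_{t\to\infty} -2 \log \sigma_{\bp}(t) = -2\log \rho_{\bp},
\end{equation*}
which is the claimed bound. There is no real obstacle here: the only slightly non-obvious point is that, in the non-reversible case, one must bound $\|p_t\|_2^2$ via the operator norm of $\cP_{\bp}^t$ (producing the $t$-th singular radius) rather than via $\rho_{\bp}^{2t}$ directly, and then appeal to Gelfand's formula to take the limit. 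The fact that $\ind_e$ has unit $\ell^2$-norm is what makes the estimate clean.
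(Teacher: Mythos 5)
Your proof is correct and follows essentially the same route as the paper's: Jensen's inequality to bound the entropy below by $-\log\|p_t\|_2^2$, then a bound on $\|p_t\|_2$ via the operator norm of $\cP_{\bp}^t$ and Gelfand's formula \eqref{eq:gelfand}. (The only cosmetic quibble: with the paper's convention $M(x,y)=\langle \ind_x, M\ind_y\rangle$, the vector $p_t$ is $(\cP_{\bp}^t)^*\ind_e$ rather than $\cP_{\bp}^t\ind_e$, but since an operator and its adjoint have the same $2\to2$ norm the estimate is unaffected.)
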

\begin{proof}
We may assume $\rho_{\bp} < 1$. Let $h \geq 0$ be such that $\rho_{\bp} < e^{-h}$. From \eqref{eq:gelfand}, there exists an integer $t_0 \geq 1$ such that for $\| \cP_{\bp} ^{t}  \|^2  = \| \cP_{\bp} ^{t} {\cP_{\bp} ^{t}}^*  \| \leq e^{-2ht}$ for all $t \geq t_0$. In particular, 
$$
 \sum_{g \in \cG} \PAR{\cP_{\bp} ^{t}(e,g)}^2   =  \PAR{\cP_{\bp} ^{t} (\cP^*_{\bp})^{t} } (e,e)  \leq e^{-2 ht}.
$$
From Jensen inequality, we deduce that 
$$
\sum_{g \in \cG} \cP_\bp^{t} (e,g) \log (\cP_{\bp} ^t (e,g) ) \leq   \log \left( \sum_{g \in \cG} \PAR{\cP_\bp^{t} (e,g)} ^2 \right) \leq  - 2ht.  
$$
It follows that $\fh(\bp) \geq 2h$.
\end{proof}
 
\noindent
We conclude this subsection with a corollary of Kesten's criterion for non-amenability applicable to non-reversible walks.
\begin{lemma}\label{le:KL}
Let $\bp$ be a finitely supported probability vector on $\cG$ and $\cA_\bp = \{ g : p_g >0\}$. The subgroup $\langle \cA_ \bp \rangle$ generated by $\cA_\bp$ is non-amenable if and only if $\rho_\bp < 1$.  
\end{lemma}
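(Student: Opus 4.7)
The strategy is to characterize amenability of $H := \langle \cA_\bp \rangle$ via the Reiter/F\o{}lner criterion and to relate it to the existence of approximate eigenvectors of $\cP_\bp$ on the unit circle. Note first that $\cP_\bp$ preserves each subspace $\ell^2(Hx)$ indexed by left cosets $Hx$ of $H$ in $\cG$, and its restriction there is unitarily equivalent to $\cP_\bp$ acting on $\ell^2(H)$; hence the spectrum on $\ell^2(\cG)$ coincides with that on $\ell^2(H)$, and I will work throughout on $\ell^2(H)$. I will use freely the universal bounds $\rho_\bp \le \sigma_\bp \le \|\bp\|_1 = 1$.

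For the direction ``$H$ amenable $\Rightarrow \rho_\bp = 1$'' (the contrapositive of $\rho_\bp < 1 \Rightarrow H$ non-amenable), I would pick a F\o{}lner sequence $(F_n) \subseteq H$ for the finite set $\cA_\bp$ and set $u_n = \ind_{F_n}/\sqrt{|F_n|}$. Using $\lambda(g)\ind_{F_n} = \ind_{gF_n}$, the F\o{}lner property $|gF_n \triangle F_n|/|F_n| \to 0$ gives $\|\lambda(g) u_n - u_n\|_2 \to 0$ for each $g \in \cA_\bp$, whence a convex combination yields $\|\cP_\bp u_n - u_n\|_2 \to 0$. Thus $1$ lies in the approximate point spectrum of $\cP_\bp$, so $\rho_\bp \ge 1$, and combined with the universal upper bound, $\rho_\bp = 1$.

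For ``$H$ non-amenable $\Rightarrow \rho_\bp < 1$'' I would argue by contradiction. Assume $\rho_\bp = 1$. Since $\sigma(\cP_\bp)$ is a compact subset of the closed unit disk that contains a point of modulus one, such a point lies on the topological boundary $\partial \sigma(\cP_\bp)$ and therefore belongs to the approximate point spectrum; this produces $|\lambda|=1$ and unit vectors $u_n \in \ell^2(H)$ with $\|\cP_\bp u_n - \lambda u_n\|_2 \to 0$, whence $\|\cP_\bp u_n\|_2 \to 1$. Expanding
\begin{equation*}
\|\cP_\bp u_n\|_2^2 = \sum_{g,h \in \cA_\bp} p_g p_h\, \langle \lambda(g) u_n, \lambda(h) u_n\rangle \longrightarrow 1,
\end{equation*}
and using $\sum_{g,h} p_g p_h = 1$ with each inner product of modulus at most one, the equality case of Cauchy--Schwarz forces $\lambda(g) u_n - \lambda(h) u_n \to 0$ for every pair $g,h \in \cA_\bp$. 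Combined with $\cP_\bp u_n \to \lambda u_n$, this yields $\lambda(g) u_n - \lambda u_n \to 0$ for every $g \in \cA_\bp$.

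To close the contradiction I would pass to pointwise absolute values: since $|\lambda| = 1$, the inequality $\bigl||a| - |b|\bigr| \le |a-b|$ gives $\|\lambda(g)|u_n| - |u_n|\|_2 \le \|\lambda(g) u_n - \lambda u_n\|_2 \to 0$, so $(|u_n|)$ is a sequence of nonnegative unit vectors asymptotically invariant under $\lambda(g)$ for every $g \in \cA_\bp$, hence under all of $H$; this contradicts non-amenability of $H$ by Reiter's criterion. The main subtlety is exactly this polar-decomposition step: one cannot directly reduce to Kesten's reversible criterion through the symmetric convolution $\bp * \check\bp$, since its support $\cA_\bp \cA_\bp^{-1}$ may generate only a proper, possibly amenable, subgroup of $H$---for instance when $\cA_\bp = \{a,b\}$ generates the free group $F_2$, the set $\cA_\bp\cA_\bp^{-1}$ generates the cyclic $\langle ab^{-1}\rangle$. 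Replacing $u_n$ by $|u_n|$ is what correctly transports the almost-invariant structure back to $H$ itself.
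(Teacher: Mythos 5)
Your proof is correct, and it takes a genuinely different route from the paper's. The paper reduces to Kesten's criterion for symmetric walks via the lazy operator $\cL_\bp = (\cI + \cP_\bp)/2$: the symmetrization $\cL_\bp \cL_\bp^*$ is again of the form $\cP_{\bp'}$, and precisely because of the added laziness its support contains $\cA_\bp \cup \cA_\bp^{-1}$, so it generates the same subgroup --- this is how the paper disposes of the obstruction you correctly flag at the end (that $\cA_\bp \cA_\bp^{-1}$ alone may generate a proper, possibly amenable, subgroup). A positivity comparison of $\|\cP_\bp^t\|$ with the central binomial term in the expansion of $\cL_\bp^{2t}$ then transfers the spectral gap, and Gelfand's formula finishes. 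You instead bypass Kesten entirely: after reducing to $\ell^2(\langle \cA_\bp\rangle)$ by the coset decomposition, you extract approximate eigenvectors for a modulus-one spectral point (legitimate, since the boundary of the spectrum lies in the approximate point spectrum), use the equality case in $\|\cP_\bp u_n\|^2 \to 1$ to get asymptotic $\cA_\bp$-invariance of $u_n$ up to the phase $\lambda$, and remove the phase by passing to $|u_n|$, landing in Reiter's criterion. This amounts to a self-contained proof of the non-reversible Kesten--Day equivalence; it trades the citation of Kesten's theorem for the standard equivalence between the invariant-mean and Reiter/F\o{}lner characterizations of amenability, and it makes the mechanism more transparent, at the cost of length. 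Two harmless cosmetic points: $Hx$ is a right coset, and the passage from asymptotic invariance under $\cA_\bp$ to invariance under all of $H$ deserves the one-line remark that unitarity handles inverses and the triangle inequality handles products.
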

\begin{proof}
It is convenient to introduce the lazy random walk, $\cL_\bp = (\cI + \cP_\bp)/2= \cP_{\delta_e / 2+\bp/2}$ where $\cI$ is the identity operator. Assume that  $\langle \cA_\bp \rangle$ is non-amenable. Then
$\cL_\bp \cL_\bp ^* $ is of the form $\cP_{\bp'}$ for some $\bp'$ which satisfies \eqref{reversib} and $\langle \cA_{\bp} \rangle = \langle \cA_{\bp'} \rangle$. Kesten's result then implies that the singular radius of $\cL_\bp$ is smaller than $1-\delta$ for some $\delta >0$. The operator norm of operators of the form $\cP_{\bp'}$ with $\bp'$ a probability vector can be obtained by taking the supremum over non-negative functions. We can thus compare the norm  of  $\cL_\bp^{2t} = ((\cI+\cP_{\bp})/2)^{2t}$ with that of any term appearing in its binomial expansion. In particular
we have,
\begin{equation*}
 \|\cP^t_{\bp}  \|_{2 \to 2}  \le 2^{2t}\binom{2t}{t}^{-1} \|\cL_\bp^{2t}\|_{2\to 2} \le 2 \sqrt t \|\cL_\bp \|^{2t}_{2\to 2}   \leq 2 \sqrt t (1 - \delta)^{2t},
\end{equation*}
(where we have used that $\binom{2t}{t} \geq 2^{2t} /(2 \sqrt t)$.
From \eqref{eq:gelfand}, it follows that $\rho_\bp \leq (1-\delta)^2 < 1$.
Conversely, if $\rho_{\bp} < 1$ then, from \eqref{eq:gelfand},  the singular radius of $\cL_\bp^t$ is strictly smaller than $1$ for some $t$. From the definition, it follows that the spectral radius of $\cP_{\bp''} := \cL_\bp^t (\cL_\bp ^t )^*$ is smaller than one, which, by Kesten's reciprocal implies the non-amenability of $\langle \mathcal A_{\bp''} \rangle=\langle  \cA_\bp \rangle$.
\end{proof}

\subsection{Mixing time from stopping time}
\label{subsec:mixstop}
We present here a result derived from 
\cite{MR3650406}, which allows to estimate the distance from equilibrium using arbitrary stopping times.
In this subsection, $(X_t)$ is an arbitrary Markov chain on a finite set $V$ with transition matrix $P$ and for $x\in V$, $\bbP_x$ denotes the distribution of this process with initial condition $X_0 = x$.

\medskip

A classical way to obtain mixing time upper-bounds is via the use of strong stationary times (see \cite[Chapter 6]{MR3726904}). A strong stationary time 
is defined as a stopping time $T$ for the chain $X$ for which $X_T$ is at equilibrium and $X_T$ and $T$ are independent.
The standard bound \cite[Lemma 6.11]{MR3726904} says that if $T$ is a strong stationary time for \eqref{eqdist} then (the bound is in fact proved for the separation distance which is larger)
\begin{equation*}
 \| P^t(x,\cdot)-\pi \|_{\TV}\le \bbP_x[T>t].
\end{equation*}

A careful inspection of the proof in \cite{MR3726904} reveals that one can allow $X_T$ to admit another distribution provided an adequate error term is added. 
However the assumption that $X_T$ and $T$ are independent is crucial in the mechanism of proof.
However using recent techniques developed in \cite{MR3765366,MR3650406} to compare mixing times with hitting times, we can by-pass this independence assumption 
if the chain is reversible and
if the mixing time is much larger than the relaxation time, at the cost of a second error term. We will present a variant of this argument for general finite Markov chains (which in particular does not require reversibility).

\noindent We say that a filtration $(\cF_t)$, is adapted to $(X_t)$ if for any $t\ge 0$, $x, y\in V$. 
\begin{equation}
 \bbE[\ind_{\{X_{t}=x, X_{t+1}=y\}} \ | \ \cF_t]= P(x,y)\ind_{\{X_t=x\}},
\end{equation}
(in particular this implies that $X_t$ is $\cF_t$ measurable).
We denote by $\ell^2(\pi)$ the vector space $\dR^V$ endowed with the scalar product, $$\langle f , g \rangle_{\pi}= \sum_{x \in V} \pi(x) f(x) g(x).$$ Let us recall the definition {\em singular radius} given in \eqref{eq:defsigt} for a finite Markov chain $P$ with invariant measure $\pi$ 
$$
\sigma =  \| P_{|\ind^\perp}\|_{\ell^2(\pi)\to \ell^2(\pi)} = \sup \BRA{ \frac{\| P_{|\ind^\perp} f \|_{\ell^2 (\pi)}}{ \|  f \|_{\ell^2 (\pi)} }  : f \ne 0 }.
$$
For any integer $t \geq 1$, we define the {\em $t$-th singular radius} as 
$$
\sigma(t) = \| (P^t)_{|\ind^\perp}\|^{1/t}_{\ell^2(\pi)\to \ell^2(\pi)}.
$$
Note that $\sigma(t) \leq \sigma$.  
Moreover, in our context, the Poincaré inequality is the claim that for any vector $f \in \dR^V$, with $\pi(f) =  \langle \ind,  f \rangle_{\pi}$,
\begin{equation}\label{eq:poincare}
\| P^t f - \pi(f) \ind \|_{\ell^2(\pi)}  = \| (P^t )_{| \ind^\perp} f \|_{\ell^2(\pi)} \leq \sigma(t)^t  \| f \|_{\ell^2(\pi)}.
\end{equation}
This follows immediately from the definition of $t$-th singular radius.  
We control distance to equilibrium with the help of stopping time with the following result (in the present paper, the inequality \eqref{en3} is sufficient for all purposes, but since
the result might have other applications, we also include a reversible version which is significantly better when $\varrho$ is close to $1$).

\begin{proposition}\label{stoop}
 Let $(X_t)$ be a finite irreducible Markov chain with transition matrix $P$, equilibrium measure $\pi$ and 
 with $t$-th singular radius  $\sigma(t)$.
 If $T$ is a stopping time with respect to a filtration adapted to $X$ and $\bbP_x(X_T\in \cdot)=\nu$, then we have for any positive integers $t$ and $s$:
  \begin{equation}\label{en3}
  \| P^{t+s}(x,\cdot) -\pi\|_{\TV} \le \| \nu -\pi\|_{\TV}+ \bbP_x[T>t]+ 2 (1 - \sigma(s))^{-1/3} \sigma(s)^{2s/3}.
 \end{equation}
 Moreover if $(X_t)$ is reversible and $\varrho = \sigma(1)$ denotes the spectral radius of $P$, we have 
 \begin{equation}\label{en3bis}
  \| P^{t+s}(x,\cdot) -\pi\|_{\TV} \le \| \nu -\pi\|_{\TV}+ \bbP_x[T>t]+ 3 \varrho^{2s/3}.
 \end{equation}
\end{proposition}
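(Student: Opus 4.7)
The proof starts with the strong Markov decomposition at $T$: for any test function $f$ with $\|f\|_\infty \leq 1$ and $\pi(f) = 0$, we have
\begin{equation*}
P^{t+s}f(x) = \bbE_x\bigl[\IND_{\{T \leq t\}}\, P^{t+s-T}f(X_T)\bigr] + \bbE_x\bigl[\IND_{\{T > t\}}\, f(X_{t+s})\bigr],
\end{equation*}
and the second term contributes at most $\bbP_x[T > t]$ to the TV norm. The first term would immediately produce the bound $\|\nu - \pi\|_{\TV}$ under the (a priori unavailable) assumption that $T$ is independent of $X_T$: one would write it as $\sum_{k \leq t}\bbP_x[T=k]\langle \nu - \pi,\, P^{t+s-k}f\rangle$ (using $\pi(f) = 0$) and conclude by the TV-contraction of $P$ together with $\pi P = \pi$. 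Thus the whole difficulty is to quantify the cost of the dependence between $T$ and $X_T$, captured by the error
\begin{equation*}
E(f) = \sum_{k \leq t}\bbP_x[T=k]\,\langle \mu_k - \nu,\, P^{t+s-k}f\rangle,
\end{equation*}
where $\mu_k = \bbP_x(X_T \in \cdot \mid T = k)$ is the conditional law.

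Since $t + s - k \geq s$ for every $k \leq t$, the Poincaré inequality \eqref{eq:poincare} gives $\|P^{t+s-k}f\|_{\ell^2(\pi)} \leq \sigma(s)^s$. A single naive Cauchy--Schwarz in $\ell^2(\pi)$ applied to $|\langle \mu_k - \nu, P^{t+s-k}f\rangle|$ however introduces the a priori unbounded quantity $\chi^2(\mu_k, \pi)$. Following the scheme of \cite{MR3650406}, one instead devotes part of the spectral budget $s$ to pre-smoothing $\mu_k$: a further application of the Markov property at time $T + s_1$ shifts the evaluation point from $X_T$ to $X_{T+s_1}$, replacing $\mu_k$ by $\mu_k P^{s_1}$ whose $\chi^2$-divergence with respect to $\pi$ is contracted by $\sigma(s_1)^{2 s_1}$. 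Splitting $s$ into three roughly equal parts $s = s_1 + s_2 + s_3$, one spends $s_1$ on this pre-smoothing, $s_2$ on the Poincaré contraction of $f$, and $s_3$ on absorbing the summation over $k$ through a geometric series $\sum_k \sigma(s)^{k/3} \leq (1 - \sigma(s))^{-1/3}$. This balanced splitting is what produces the exponent $2s/3$ and the prefactor $(1 - \sigma(s))^{-1/3}$ in \eqref{en3}.

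For the sharper reversible bound \eqref{en3bis}, one uses the symmetric spectral decomposition $P = \sum_\lambda \lambda\, \Pi_\lambda$ with orthogonal eigenprojections. Applying Jensen's inequality to $|P^{t-T}\phi(X_{T+s_1})|^2 \leq P^{t-T}\phi^2(X_{T+s_1})$ with $\phi = P^{s_2}f$, then the Markov property at $T + s_1$, reduces the estimate to a bound on $P^{t+s_1}\phi^2(x) - \pi(\phi^2)$, which is controlled by $\varrho^{2s/3}$ with only a bounded constant thanks to the self-adjointness of $P$ and $\pi(\phi^2) \leq \varrho^{2s_2}$. This yields \eqref{en3bis} without the geometric-series loss.

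The main technical obstacle is the three-way balancing in the non-reversible case: each of the three resources (Poincaré contraction of $f$, pre-smoothing of $\mu_k$, and the Cauchy--Schwarz summation over $k$) consumes roughly a third of the budget $s$, and the exponent $2s/3$ is the best achievable within this strategy; improving it to $s$ would require control of the $\chi^2$-divergence of $\nu$ itself, which the theorem's formulation is carefully designed to avoid.
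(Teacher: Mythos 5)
Your decomposition via the strong Markov property is a genuinely different route from the paper's, and it has a gap at exactly the point you flag and then claim to resolve. The pre-smoothing step does not eliminate the dependence on $\chi^2(\mu_k,\pi)$: the contraction gives $\chi^2(\mu_k P^{s_1},\pi)\le \sigma(s_1)^{2s_1}\,\chi^2(\mu_k,\pi)$, and $\chi^2(\mu_k,\pi)$ can be of order $\min_y \pi(y)^{-1}$ (e.g.\ $\mu_k$ is a point mass when $T$ is a hitting time), i.e.\ of order $n$ for the uniform measure. Your Cauchy--Schwarz pairing therefore produces a bound of the form $\sigma^{s_1}\sqrt{n}\,\sigma^{s_2}$, which cannot be converted into the $n$-free bound $2(1-\sigma(s))^{-1/3}\sigma(s)^{2s/3}$ of \eqref{en3} without an extraneous relation between $s$ and $n$. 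The bookkeeping of the third piece of the budget is also off: $\sum_{k\ge 0}\sigma^{k/3}=(1-\sigma^{1/3})^{-1}$, not $(1-\sigma)^{-1/3}$, and in any case no geometric series over $k$ is needed since $\sum_k \bbP_x[T=k]\le 1$. The three-way splitting of $s$ is not where the exponent $2s/3$ comes from.

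The paper's proof avoids the conditional laws $\mu_k$ entirely. For a target set $A$ it introduces the exceptional set $U(A,s,\gep)=\{y : \exists\, t'\ge s,\ |P^{t'}(y,A)-\pi(A)|>\gep\}$ and bounds $\pi(U(A,s,\gep))\le \gep^{-2}(1-\sigma(s)^2)^{-1}\sigma(s)^{2s}$ by Markov's inequality applied to $\|P^{t'}\IND_A-\pi(A)\IND\|^2_{\ell^2(\pi)}$ (Poincar\'e), summed over $t'\ge s$; hence $\nu(U)\le \pi(U)+\|\nu-\pi\|_{\TV}$. On the event $\{T\le t,\ X_T\notin U\}$ the quantity $P^{t+s-T}(X_T,A)-\pi(A)$ is bounded by $\gep$ \emph{uniformly in the value of $T$}, which is what neutralizes the dependence between $T$ and $X_T$; the remaining contributions are $\bbP_x[T>t]$ and $\nu(U)$. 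The exponent $2s/3$ and the prefactor $(1-\sigma(s))^{-1/3}$ then come from optimizing $\gep$ against $\gep^{-2}(1-\sigma(s)^2)^{-1}\sigma(s)^{2s}+\gep$, i.e.\ taking $\gep\asymp(1-\sigma(s))^{-1/3}\sigma(s)^{2s/3}$ --- an $L^\infty$-off-a-small-set argument rather than an $\ell^2$ pairing. The reversible case \eqref{en3bis} is the same argument with the sharper bound $\pi(U(A,s,\gep))\le 2\gep^{-2}\varrho^{2s}$ imported from \cite[Corollary 2.4]{MR3650406} (a maximal inequality removes the factor $(1-\sigma(s)^2)^{-1}$), which your spectral-decomposition sketch does not clearly reproduce. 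To repair your proof you would need to replace the Cauchy--Schwarz step by some such uniform control off an exceptional set; as written, the argument does not close.
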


\begin{proof}
In the reversible case, the main ingredient of our proof is \cite[Corollary 2.4]{MR3650406}, which we  reformulate as follows.  Given a set $A \subset V$, $s\ge 0$ and $\gep>0$ we set 
$$U(A,s,\gep):=\{ y \in V \ : \ \exists t \ge s,\ | P^t(y,A)- \pi(A) |> \gep \}.$$
Then we have 
$$\pi(U(A,s,\gep))\le 2 \gep^{-2}\varrho^{2s}.$$
From the definition of total variation distance we deduce
\begin{equation}\label{eq:nuU}\nu(U(A,s,\gep))\le 2 \gep^{-2}\varrho^{2s}+\| \nu -\pi\|_{\TV}.
\end{equation}
For every $x$, $t$ and $s$, using the triangle inequality and the fact that $X_T\sim
\nu$ we obtain (using the short-hand notation $U$ for $U(A,s,\gep)$) that
\begin{equation*}
 P^{t+s}(x,A)-\pi(A)\le 
  \sum_{i=0}^t \sum_{y\notin U} \bbP_x(T=i  \ ; X_T=y)(P^{s+t-i}(y,A)-\pi(A))
  + \bbP_x[T>t]+\nu( U).
 \end{equation*}
From the definition of $U(A,t,\gep)$ the double sum above is smaller than $\gep$. Thus, from \eqref{eq:nuU}, we obtain (maximizing over $A$)
\begin{equation*}
 \| P^{t+s}(x,\cdot) -\pi\|_{\TV} \le \bbP_x[T>t]+\| \nu -\pi\|_{\TV}+ 2 \gep^{-2}\varrho^{2s}+\gep,
\end{equation*}
and one can conclude by choosing $\gep=\varrho^{2s/3}$.
In the general case, we define for integer $s \geq 0$
$$
V(A,s,\gep):=\{ y \in V \ :  | P^s(y,A)- \pi(A) |> \gep \} \quad \hbox{ and } \quad U(A,s,\gep) = \bigcup_{t= s}^{\infty} V(A,k,\gep).$$
In particular, we recover the above definition for $U(A,s,\gep)$. 
From Markov inequality and \eqref{eq:poincare}, for any integer $t \geq 0$, we have 
$$
\pi(V(A,t,\gep)) \leq \veps^{-2} \| P^t \ind_A - \pi(A) \ind \|^2_{\ell^2(\pi)}  \leq \veps^{-2} \sigma(t)^{2t} \pi(A).
$$
Since $\sigma_t$ is non-increasing, we thus have for any $s$ and $A$,
$$
\pi ( U(A,s,\gep))  \leq \sum_{k=s}^{\infty} \pi(V(A,k,\gep)) \leq \veps^{-2} (1- \sigma(s)^2)^{-1} \sigma(s)^{2s}.
$$
We deduce a slightly modified form of Equation \eqref{eq:nuU}
$$
\nu(U(A,s,\gep))\le  \veps^{-2} (1- \sigma(s)^2)^{-1} \sigma(s)^{2s} +\| \nu -\pi\|_{\TV}.
$$
We may thus reproduce the same argument. 
 \end{proof}

\section{Simple random walks on Ramanujan graphs revisited}\label{revisited}

\subsection{Sketch of proof of  Theorem \ref{th:LP} and Theorem \ref{goodold}}\label{sketch}

In order to prove Theorem \ref{th:LP} and Theorem \ref{goodold}, we apply 
Proposition \ref{stoop} for a stopping time defined using a coupling between the random walk on $G_n$ and that on $\cT_d$, the infinite $d$-regular tree. This coupling is defined thanks to a covering map from $\cT_d$ to $G_n$.

We denote by $e$ the root vertex of $\cT_d$. Let $\cX$ be the simple random walk on $\cT_d$ starting from $e$.  Given $x\in V_n$, we fix a local graph homeomorphism $\varphi: \cT_d \to G_n$ 
(each vertex $v$ in $\cT_d$ has its $d$ neighbors mapped to the $d$ neighbors of $\varphi(v)$ in $G_n$) such that $\varphi(e)=x$. We may construct the simple random walk on $G_n$ by setting $X_t:= \varphi(\cX_t)$.
For a well chosen integer $k \geq 1$, we define the stopping time $\tau$ as 
\begin{equation}\label{eq:deftau0}
\tau = \inf \{ t \geq 0 : \mathrm{Dist}(\cX_t,e) = k \}, 
\end{equation}
where $\mathrm{Dist}(v,e)$ is the distance of the vertex $v$ in $\cT_d$ to the root $e$. With $k = \frac{\log n}{\log (d-1)}(1+o(1))$, we show that at the time $\tau$, $X_{\tau} =  \varphi(\cX_\tau)$ is close to equilibrium.
More precisely, we use that the distribution of $X_{\tau}$ can be expressed as an explicit polynomial  of the transition matrix $P_n$ (cf. Lemma \ref{polyns}), and thus its $\ell_2$-norm can be controlled in terms of the spectral radius of $P_n$ (cf. Lemma \ref{spectrous}). This spectral bound turns out to be optimal.

Then the proof is concluded easily by using Proposition  \ref{stoop} and the fact that the detailed behavior of $\tau$, which is a hitting time for a biased random walk, is known. It is worth mentioning that this reasoning leads to  a more quantitative result in Proposition \ref{kant} below (which can also be obtained using methods from \cite{MR3558308}). We note also that the variables $X_{\tau}$ and $\tau$ are independent and Proposition  \ref{stoop} in its full strength is not really needed here.

\subsection{Non-backtracking walks and Chebyshev polynomials} \label{nonbac}

In this subsection, let us consider $G=(V,E)$ an arbitrary simple $d$-regular graph.
Given $k\ge 1$ integer, we let 
$$W_k:= \{ (x_{i})_{i=0}^k \in V^{k+1} \ : \ \forall i\in [k], \ \{x_{i-1},x_i\}\in E\}$$ denote the set of paths of length $k$ in $G$.
Given $x,y\in V$, we define the set of non-backtracking paths of length $k$ from $x$ to $y$ as (with the convention that $[0]$ is the empty set)
\begin{equation*}
\NB_k(x,y):=\{ {\bf x}\in W_k \ : \ x_0=x, \ x_k=y \ , \  \forall i\in [k-1], x_{i-1}\ne x_{i+1}\},
\end{equation*}
and $\NB_k(x):=\bigcup_{y\in V} \NB_k(x,y)$.
We define the non-backtracking operator of length $k$ on $G$ to be the following stochastic matrix on $V\times V$,
\begin{equation*}
 Q_k(x,y):= \frac{\#\NB_k(x,y)}{\#\NB_k(x)}=\frac{\#\NB_k(x,y)}{d(d-1)^{k-1}}.
\end{equation*}

We let $P$ denote the transition matrix for the simple random walk on $G$ (i.e. $P=Q_1$).
The following well known result (see e.g. \cite{MR2348845,MR2377835} and \cite{MR0338272} for an early reference) will help us to control the spectral radius of $Q_k$ in terms of that of $P$.
\begin{lemma}\label{polyns}
 For every integer $k$, there exists a polynomial $p_k$ such that $Q_k=p_k(P)$ for every simple $d$-regular graph $G$.
 More precisely we have
 $$
p_k  (x) = \frac{1}{d  (d-1)^{k/2}} \PAR{   (d-1) U_k \PAR{\frac{x }{  \rho}  }  - U_{k-2} \PAR{\frac{x }{  \rho}  } },
$$
where $\rho: = ( 2 \sqrt {d-1} ) / d $ and $(U_k)$, $k \geq -1$, are the Chebyshev polynomials of the second type defined rescursively by
$$U_{-1} = 0, \quad U_0 = 1, \quad  \text{ and } \quad  U_{k+1}(x) = 2 x U_k (x)- U_{k-1}(x).$$ 
 \end{lemma}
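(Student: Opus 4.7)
The plan is to prove $Q_k = p_k(P)$ by setting up a common three-term recurrence satisfied both by $Q_k$ and by the candidate polynomials $p_k$, and matching the first few terms. Since everything in sight is a polynomial in the adjacency matrix $A = dP$, both sides commute, and it suffices to check the identity as polynomial identities.

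First I would introduce $N_k := d(d-1)^{k-1} Q_k$ for $k \ge 1$ and $N_0 := I$, so that $N_k(x,y)$ counts the number of non-backtracking paths of length $k$ from $x$ to $y$. A direct combinatorial argument establishes
$$
N_{k+1} \;=\; A\, N_k \;-\; (d-1)\, N_{k-1} \qquad (k \ge 2),
$$
with $N_0 = I$, $N_1 = A$, and $N_2 = A^2 - d I$. The idea is that $(A N_k)(x,y)$ counts concatenations of an edge $(x,z)$ with a non-backtracking path of length $k$ from $z$ to $y$, and the concatenation fails to be non-backtracking exactly when the second vertex of the inner path equals $x$; given a non-backtracking path of length $k-1$ from $x$ to $y$ one recovers such a configuration by choosing any of the $d-1$ admissible neighbors $z$ of $x$, explaining the term $(d-1) N_{k-1}(x,y)$. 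The case $k = 1$ must be treated by hand since the inner path has no second vertex, and a direct count of length-two walks yields $N_2 = A^2 - d I$ instead of $A^2 - (d-1) I$. Dividing through by $d(d-1)^k$ recasts the recurrence as the polynomial identity $p_{k+1}(x) = (d x\, p_k(x) - p_{k-1}(x))/(d-1)$ for $k \ge 2$, with the initial data $p_0 = 1$, $p_1(x) = x$, $p_2(x) = (d x^2 - 1)/(d-1)$, which uniquely determines the whole sequence.

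It then remains to check that the closed form in the statement satisfies these data. For $k = 0, 1, 2$ this is a direct computation using $\rho = 2\sqrt{d-1}/d$, the base values $U_0 = 1$, $U_1(z) = 2z$, and the backward extension $U_{-2} = -1$ obtained from the Chebyshev recurrence itself. For the inductive step, setting $y = x/\rho$ so that $dx/\sqrt{d-1} = 2 y$, the Chebyshev recurrence $U_{j+1}(y) = 2 y U_j(y) - U_{j-1}(y)$ applied to $j = k$ and $j = k-2$ lets one rewrite both sides of the polynomial recurrence above as
$$
\frac{dx (d-1)\, U_k(y) \;-\; d \sqrt{d-1}\, U_{k-1}(y)}{d(d-1)^{(k+2)/2}},
$$
with the $U_{k-3}$ and $U_{k-2}$ contributions cancelling after the substitution. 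Induction then closes the argument. The only real obstacle is bookkeeping: the exceptional $k = 1$ initialization and the tracking of half-integer powers of $d-1$ when passing between $p_k$ and $U_k$; no deeper ingredient is needed.
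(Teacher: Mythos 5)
Your proof is correct and takes essentially the same route as the paper's: both reduce the claim to the three-term recurrence $\bar Q_{k+1} = dP\,\bar Q_k - (d-1)\bar Q_{k-1}$ (with the exceptional initialization at $k=1$ giving $A^2 - dI$), translate it into $p_{k+1} = \frac{d}{d-1}x\,p_k - \frac{1}{d-1}p_{k-1}$, and verify that the Chebyshev closed form satisfies this recursion with the same initial data. The only cosmetic difference is that you establish the recurrence by counting concatenations directly on $G$, whereas the paper lifts to the universal cover $\cT_d$, where $\bar Q_k$ becomes the distance-$k$ indicator; both justifications are valid.
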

\begin{proof}[Proof (sketch)]
For a more detailed proof, we refer to the above mentionned references \cite{MR2348845,MR0338272,MR2377835}. It is sufficient to check that the identity $Q_k=p_k(P)$ is valid on the $d$-regular infinite tree $\cT_d$ (it is the universal covering of $G$ and the preimage by a covering map of the non-bactracking paths on $G$ are  the non-backtracking paths on $\cT_d$). 
 We set $\bar Q_k:= d(d-1)^{k-1} Q_k$.
Using that $\bar Q_k(x,y)=\ind_{\{\mathrm{Dist}(x,y)=k\}}$ it is simple to check that
 \begin{equation}\label{eq:recQbar}
  \bar Q_{k+1}=   d P \bar Q_k - \bar Q_{k-1}. 
 \end{equation}
The result follows then by induction on $k$. We find $p_1 = x$, $p_2 = x^2   d / (d-1) - 1 / (d-1) $
and, from \eqref{eq:recQbar},
$$p_{k+1} = \frac {d }{ d-1}  x p_k  - \frac{ 1 }{ d-1} p_{k-1}.$$
It is then immediate to check that this recursion coincides with the recursion satisfied by the polynomials $d^{-1}  (d-1)^{-k/2} \PAR{   (d-1) U_k \PAR{ x  /  \rho }  - U_{k-2} \PAR{\ x  / \rho  } }$.  The conclusion follows. \end{proof}

\noindent The polynomials $(p_k)$ are called the {\em Geronimus polynomials} (in reference to \cite{MR1502972}) or the {\em non-backtracking polynomials}.

\subsection{Proof of Theorem \ref{th:LP}}

Recall that $\rho_n$ denotes the spectral radius for $P_n$ restricted to non-constant function. We let 
$$\eta_n:= \max\left(0,\frac{d \rho_n }{2\sqrt{d-1}}-1\right)$$ quantify by how much $G_n$ is far from being a Ramanujan graph. Theorem \ref{th:LP} is a consequence of this more quantitative statement.
\begin{proposition}\label{kant}
Let $(G_n)$ be a sequence of $d$-regular graphs on $n$ vertices such that $\lim_{n\to \infty}\eta_n=0$.
There exists a constant $C$ and a sequence $\delta_n$ tending to zero such that for all $\gep\in(0,1)$ for all $n$ sufficiently large (depending on $\gep$)
\begin{equation}\label{lupper}
 \Tmn(\gep)\le \left( \frac{ d }{(d-2) \log (d-1)}+ C\sqrt{\eta_n} \right) \log n+ (\Phi(\gep)+\delta_n) \sqrt{\log n},
\end{equation}
where, if $Z$ is a standard normal distribution, the function $\Phi(\cdot)$ is defined as the inverse of 
$$s\mapsto \dP\left[ Z \ge \frac{(d-2)^{3/2}}{2\sqrt{d(d-1)}}s\right].$$
In particular if $\lim_{n\to \infty}\eta_n \log n=0$,
then 
\begin{equation}\label{profyle}
\Tmn(\gep)= \frac{ d }{(d-2) \log (d-1)} \log n + \Phi(\gep)\sqrt{\log n}+o(\sqrt{\log n}).
\end{equation}
\end{proposition}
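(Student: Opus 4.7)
The plan is to exhibit a stopping time $\tau$ for the simple random walk on $G_n$ such that the law $\nu$ of the walk at time $\tau$ is close to $\pi_n$ in total variation, and $\tau$ concentrates around a deterministic value with Gaussian fluctuations of order $\sqrt{\log n}$; Proposition~\ref{stoop} then assembles these two ingredients into the mixing-time bound \eqref{lupper}. Following the sketch of Section~\ref{sketch}, fix a covering map $\varphi : \cT_d \to G_n$ with $\varphi(e) = x$, let $\cX$ denote the simple random walk on $\cT_d$ started at $e$, and set $X_t := \varphi(\cX_t)$. For a well-chosen integer $k$, define
\begin{equation*}
\tau_k := \inf\{ t \ge 0 : \D(\cX_t, e) = k \}.
\end{equation*}
At each outward step the walker picks one of $(d-1)$ children uniformly, so a direct symmetry argument shows that $\cX_{\tau_k}$ is uniform on the sphere of radius $k$ in $\cT_d$ and is \emph{independent} of $\tau_k$. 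Consequently $X_{\tau_k}$ has law $\nu := Q_k(x,\cdot)$, the non-backtracking distribution of length $k$ from $x$.

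\textbf{Spectral bound for $\nu$.} By Lemma~\ref{polyns} we have $Q_k = p_k(P_n)$; since $P_n$ is symmetric, a spectral decomposition together with the Cauchy--Schwarz bound $\|\nu - \pi_n\|_{\TV} \le \tfrac{1}{2}\| \mathrm{d} \nu / \mathrm{d} \pi_n - 1\|_{\ell^2(\pi_n)}$ yields
\begin{equation*}
\| \nu - \pi_n \|_{\TV} \le \tfrac{1}{2} \sqrt n \cdot \max\bigl\{ |p_k(\lambda)| : \lambda \in \Sp(P_n),\ \lambda \ne 1 \bigr\}.
\end{equation*}
To estimate $|p_k(\lambda)|$ I would parameterize $\lambda / \rho = \cos\theta$ (respectively $\cosh\theta$) when $|\lambda| \le \rho$ (respectively $|\lambda| > \rho$), and use the Chebyshev identity $U_k(\cos\theta) = \sin((k+1)\theta)/\sin\theta$ together with its hyperbolic analog. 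For $|\lambda| \le \rho$ this gives $|p_k(\lambda)| \le C(k+1)(d-1)^{-k/2}$; for $\rho < |\lambda| \le \rho(1+\eta_n)$ the continuation parameter is $\theta \sim \sqrt{2\eta_n}$, producing an extra factor of order $\exp(k\sqrt{2\eta_n}(1+o(1)))$. Requiring the right-hand side above to be $\le \gep/3$ then forces
\begin{equation*}
k = \frac{\log n}{\log(d-1)} \bigl(1 + C' \sqrt{\eta_n}\bigr) + O(\log\log n),
\end{equation*}
which is the source of the $C\sqrt{\eta_n}\log n$ correction in the leading term of \eqref{lupper}.

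\textbf{Fluctuations of $\tau_k$ and conclusion.} The distance process $(\D(\cX_t, e))_{t \ge 0}$ is a nearest-neighbour walk on $\bbN$ reflected at $0$, with upward drift $\mu = (d-2)/d$ and step variance $\sigma^2 = 4(d-1)/d^2$. The classical hitting-time CLT then gives
\begin{equation*}
\tau_k = \frac{k}{\mu} + \frac{\sigma}{\mu^{3/2}} \sqrt k \cdot Z + o(\sqrt k), \qquad \frac{\sigma}{\mu^{3/2}} = \frac{2\sqrt{d(d-1)}}{(d-2)^{3/2}},
\end{equation*}
with $Z$ standard normal, so that the $(1-\gep)$-quantile of $\tau_k$ matches the second-order term in \eqref{lupper} by the definition of $\Phi$. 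I would then apply the reversible bound \eqref{en3bis} of Proposition~\ref{stoop} with the stopping time $\tau_k$, the law $\nu$ above, and $s$ a sufficiently large constant (permissible since $\rho_n \le 1-\delta$ makes $3\rho_n^{2s/3}$ as small as desired), obtaining \eqref{lupper}. The asymptotic \eqref{profyle} follows by combining \eqref{lupper} with the entropic lower bound \eqref{basiclower} and observing that the assumption $\eta_n\log n \to 0$ makes the $\sqrt{\eta_n}\log n$ correction $o(\sqrt{\log n})$.

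\textbf{Main obstacle.} The delicate step is the Chebyshev estimate for $|p_k(\lambda)|$ on the annulus $\rho < |\lambda| \le \rho(1+\eta_n)$: the growth of the hyperbolic Chebyshev polynomials must be controlled sharply in $\eta_n$ so that the induced correction to $k$ is of the claimed order $\sqrt{\eta_n}\log n$ rather than, say, $\eta_n\log n$. Everything else (the symmetry producing the law $\nu$, the hitting-time CLT for a biased random walk on $\bbN$, and the invocation of Proposition~\ref{stoop}) is essentially routine.
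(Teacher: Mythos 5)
Your proposal follows the paper's own proof essentially step for step: the exit time of the ball of radius $k$ in $\cT_d$, the identification of the law of $X_{\tau}$ with $Q_k(x,\cdot)=p_k(P_n)(x,\cdot)$, the Chebyshev estimate splitting $\Sp(P_n)$ into $[-\rho,\rho]$ and the annulus $\rho<|\lambda|\le\rho(1+\eta_n)$ (which is exactly Lemma \ref{spectrous}), the hitting-time CLT with the same normalizing constant $2\sqrt{d(d-1)}/(d-2)^{3/2}$, and the final assembly via Proposition \ref{stoop}.

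Two quantitative points in your write-up would, as written, deliver a slightly weaker statement than \eqref{lupper}--\eqref{profyle}. First, if you split the error budget as $\gep/3+\gep/3+\gep/3$ across the three terms of \eqref{en3bis}, then the stopping time must be controlled at its $(1-\gep/3)$-quantile, and you end up with $\Phi(\gep/3)\sqrt{\log n}$ rather than $\Phi(\gep)\sqrt{\log n}$ in the second-order term; since the exact constant $\Phi(\gep)$ is the content of the window statement, you should instead make the other two terms $o(1)$ in $n$ (the paper takes both to be $(\log n)^{-1}$, choosing $k_n$ as the smallest $k$ with $r_{k,n}\le (\sqrt n\log n)^{-1}$ and $s=s_n=(\log\log n)^2$; this only costs an additive $O(\log\log n)$ in $k_n$ and an additive $o(\sqrt{\log n})$ in time, both absorbed by $\delta_n\sqrt{\log n}$). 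Second, for the equality \eqref{profyle} the lower bound \eqref{basiclower} is not sharp enough: it only gives $\frac{d}{(d-2)\log(d-1)}(\log n - C_\gep\sqrt{\log n})$, whereas matching \eqref{profyle} requires the CLT-refined counting lower bound (the paper cites \cite[Fact 2.1]{MR3558308}), i.e.\ the same Gaussian fluctuation analysis of the distance process you already use for the upper bound, applied to the ball-volume argument. Both fixes are routine and do not change the architecture of your proof.
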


Note that the upper-bound in \eqref{profyle} is an immediate consequence of \eqref{lupper}, while the lower bound (displayed in \cite[Fact 2.1]{MR3558308}) - which is valid for any $d$-regular graph - follows from the argument sketched in the introduction. We note also that it follows from \cite{bordenaveCAT} that, if $G_n$ is a uniform random $d$-regular graph on $n$ vertices then $\eta_n \sqrt{\log n}$ converges to $0$ in probability. Hence, we recover the main result of \cite{MR2667423} from Proposition \ref{kant}.
The remainder of this subsection is devoted to the proof of Proposition \ref{kant}. The proof includes a  technical lemma whose proof is postponed to the end of the section.

\begin{proof}[Proof of Proposition \ref{kant}]
We apply the content of the previous subsection to our problem. Let $x$ be in $V_n$ and $\varphi : \cT_d \to G_n$ be as in Section \ref{sketch} be a local graph homeomorphism such that $\varphi(e) = x$, where $e$ is the root of $\cT_d$.  Let $\cX_t$ be the simple random walk on $\cT_d$ started at the root vertex $e$.  Then $X_t := \varphi(X_t)$ is a simple random walk on $G_n$  starting from $x$.  For an integer $k$ to be chosen later on, let $\tau $ be defined as in \eqref{eq:deftau0}. Since non-backtracking paths in a tree are geodesic paths, it is immediate to see that the distribution of $X_{\tau}$ is given by $Q_{k,n}(x,\cdot)$ where $Q_{k,n}(x,\cdot)$ is the non-backtracking operator on $G_n$.
Hence in particular the standard $\ell_2$ upper-bound on total variation distance \eqref{standspec} applied for $t=1$ yields 
\begin{equation}\label{rouf}
 \|Q_{k,n}(x,\cdot)- \pi_n\|_{\TV}\le r_{k,n}\sqrt{n}.
\end{equation}
where, using Lemma \ref{polyns}
 $$ r_{k,n}:= \max_{\gl \in \Sp(Q_{k,n})\setminus 1} |\gl|= \max_{\gl \in \Sp(P_n)\setminus 1} p_k(\gl).$$
Hence if one sets 
$$k = k_n:= \min\left\{ k  \ : \  r_{k,n}\le \frac{1}{\sqrt{n} \log n} \right\},$$
we deduce from \eqref{rouf} that
$
 \|Q_{k_n,n}(x,\cdot)- \pi_n\|_{\TV} \leq (\log n)^{-1}
$. 
We now apply Proposition \ref{stoop} for $T=\tau$. We obtain that, provided that $\rho_n \le 1-\delta$ (which is true for all $n$ large enough if $\eta_n\to 0$ for e.g. $\delta=1/20$), for all $t\geq 0$,
\begin{equation}\label{lappli}
 d_n(x,t+s)\le  \|Q_{k_n,n}(x,\cdot)- \pi_n\|_{\TV}+ \bbP[\tau \ge t ]+ 3(1-\delta)^{2s/3}.
\end{equation}
The last term can be made smaller than $(\log n)^{-1}$ for all $n$ large enough by choosing $s = s_n:= (\log \log n)^2$. Hence, setting 
$$ t_n(\gep):= \inf\left\{ t \ : \  \bbP[\tau > t ]\le \gep -2  (\log n)^{-1}\right\},$$ 
we obtain
$$\Tmn(\gep)\le t_n(\gep)+ s_n.$$
Now, the central limit theorem for the biased random walk on the line implies that 
$$
\frac{\mathrm{Dist} ( \cX_t , o ) - t( (d-2)/d) }{ 2 \sqrt{d-1} / d } 
$$
converges weakly to a standard normal distribution. We may thus easily estimate $t_n$ as a function of $k_n$.  Hence, the only missing part is an estimate for $k_n$.

\begin{lemma}\label{spectrous}
For any integer $d \geq 3$, there exists a constant $C$ such that for all $n$ sufficiently large we have
\begin{equation*}
 k_{n}\le \begin{cases}
              \frac{\log n}{\log (d-1)} + C\left( \log\log n \right), \quad  &\text{ if } \eta \le (\log n)^{-2}(\log \log n)^2, \\
              \frac{\log n}{\log (d-1)}  (1+C\sqrt{\eta}) \quad \quad &\text{ if } \eta \ge (\log n)^{-2}(\log \log n)^2.
             \end{cases}
\end{equation*}
\end{lemma}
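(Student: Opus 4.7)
My proof plan is to combine the explicit Chebyshev-polynomial formula from Lemma \ref{polyns} with standard size estimates on $U_k$ inside and outside the interval $[-1,1]$. Recall
\[
p_k(\lambda) = \frac{1}{d(d-1)^{k/2}}\Big[(d-1)\,U_k(\lambda/\rho) - U_{k-2}(\lambda/\rho)\Big],
\]
and that for any non-trivial eigenvalue $\lambda$ of $P_n$ we have $|\lambda|\le \rho_n = \rho(1+\eta_n)$. I will split the maximization defining $r_{k,n}$ according to whether $|\lambda|\le\rho$ (``Ramanujan zone'') or $\rho < |\lambda|\le \rho_n$ (which only occurs when $\eta_n>0$), and in each case produce an upper bound.

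For the first zone, the identity $U_k(\cos\theta)=\sin((k+1)\theta)/\sin\theta$ gives $|U_k(x)|\le k+1$ for $|x|\le 1$, hence $|p_k(\lambda)|\le C_d\, k\,(d-1)^{-k/2}$. For the second zone, write $|\lambda|/\rho=\cosh\psi$ with $0<\psi\le \psi_n:=\arccosh(1+\eta_n)$, and use the hyperbolic analogue $U_k(\pm\cosh\psi)=(\pm 1)^k\sinh((k+1)\psi)/\sinh\psi$ to obtain
\[
|p_k(\lambda)| \;\le\; \frac{C_d\,e^{k\psi_n}}{(d-1)^{k/2}\sinh\psi_n}.
\]
Taking the max of both estimates gives an explicit upper bound on $r_{k,n}$, and I then simply solve $r_{k,n}\le 1/(\sqrt n\log n)$ for $k$.

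In the small-$\eta_n$ regime ($\eta_n\le(\log n)^{-2}(\log\log n)^2$), the Taylor expansion $\psi_n\le\sqrt{2\eta_n}$ gives $\psi_n=O((\log n)^{-1}\log\log n)$. When $k$ is of order $\log n/\log(d-1)$, this forces $e^{k\psi_n}=(\log n)^{O(1)}$ and $1/\sinh\psi_n=O(\log n/\log\log n)$, so the target inequality reduces to $(d-1)^{k/2}\ge \sqrt n\,(\log n)^{O(1)}$, met for $k=\lceil \log n/\log(d-1)\rceil +C\log\log n$. In the larger-$\eta_n$ regime, $\log(1/\sinh\psi_n)=O(\log\log n)$, so the required inequality becomes
\[
k\PAR{\tfrac{1}{2}\log(d-1)-\psi_n}\ge \tfrac{1}{2}\log n + O(\log\log n),
\]
and expanding $1/(\log(d-1)-2\psi_n)=\log(d-1)^{-1}(1+O(\psi_n))$ together with $\psi_n\le C\sqrt{\eta_n}$ yields the announced bound $k_n\le (\log n/\log(d-1))(1+C\sqrt{\eta_n})$; the lower bound $\sqrt{\eta_n}\ge (\log n)^{-1}\log\log n$ ensures that the additive $O(\log\log n)$ term is absorbed into the multiplicative $C\sqrt{\eta_n}$ correction.

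The main subtlety is controlling the $1/\sinh\psi_n$ prefactor when $\psi_n$ is very small: one needs to check that its contribution is only polylogarithmic in $n$, which is exactly why the threshold $(\log n)^{-2}(\log\log n)^2$ separates the two cases (below this threshold the spectral bound from the Ramanujan zone dominates, so the $1/\sinh\psi_n$ singularity becomes irrelevant; above it the factor remains polylogarithmic). Everything else is direct algebra from the Chebyshev estimates.
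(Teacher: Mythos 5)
Your argument is correct and follows essentially the same route as the paper: the paper bounds $r_{k,n}\le (d-1)^{-k/2}U_k(1+\eta_n)$ via monotonicity of $U_k$ on $[1,\infty)$ and then invokes the unified estimate $r_{k,n}\le C(d-1)^{-k/2}\min(\eta_n^{-1/2},k)\,e^{Ck\sqrt{\eta_n}}$, which is exactly your two-zone Chebyshev bound packaged into one formula. The only point to tighten is that in the small-$\eta_n$ regime you must use the prefactor $\min(k+1,1/\sinh\psi_n)$ (via $\sinh((k+1)\psi)/\sinh\psi\le (k+1)e^{k\psi}$) rather than $1/\sinh\psi_n$ alone, which is unbounded as $\eta_n\to 0$; your closing paragraph shows you are aware of this and it matches the paper's $\min(\eta^{-1/2},k)$ factor.
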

The above estimates combined with the use of the Central Limit Theorem (details are left to the reader) imply that 
 $$t_n(\gep)\le   \left( \frac{ d }{(d-2) \log (d-1)}+ C\sqrt{\eta_n} \right) \log n+\left(\Phi(\gep)+\delta_n\right) \sqrt{\log n} .$$
This concludes the proof of Proposition \ref{kant} (provided that Lemma \ref{spectrous} has been established). \end{proof}

\begin{proof}[Proof of Lemma \ref{spectrous}]
We use the following classic identities, valid for all $\theta \in \bbR\backslash\{0\}$ and $k \in \dN$,
\begin{equation}\label{quant}
U_k (\cosh \theta) = \frac{\sinh ( (k+1) \theta )}{ \sinh \theta} \quad \hbox{ and } \quad  U_k (\cos \theta) = \frac{\sin ( (k+1) \theta )}{ \sin \theta}. 
\end{equation}
We note that $U_k$ is either an even or an odd function (depending on the parity of $k$). We thus have for any $\gl$,
$$|p_k(\gl)| \le \frac{1}{d(d-1)^{k/2}}[(d-1)|U_k( |\gl|/\rho)|+|U_{k-2}(\gl/\rho)|] $$
Using the fact (it can be checked using \eqref{quant}) that $|U_k(x)|\le k+1$ on $[0,1]$ and $U_k(x)$ is increasing on $[1,\infty)$ we obtain that 
\begin{equation}\label{loop}
\max_{\gl \in \Sp(P_n)\setminus 1} |p_k(\gl)|
\le \frac{1}{d(d-1)^{k/2}}[(d-1)U_k(1+\eta_n)+U_{k-2}(1+\eta_n)].
\end{equation}
and hence $r_{k,n}\le (d-1)^{-k/2}U_k(1+\eta_n)$.
Using  the identity \eqref{quant} we obtain that there exists a constant $C$ such that
\begin{equation*}\label{laborn}
 r_{k,n} \le \frac{C}{(d-1)^{k/2}} \min(\eta^{-1/2},k)e^{C k\sqrt{\eta}}.
\end{equation*}
This is sufficient to obtain the desired estimate on $k_n$.\end{proof}

\subsection{Proof of  Theorem \ref{goodold}}

Let $\eta >0$. To prove Theorem \ref{goodold}, we use \eqref{lappli} with $k_n$ replaced by $k'_n= \frac{\log n}{\log (d-1)}(1+\eta/2)$.
By the law of large numbers $\tau = \tau(k)$ is asymptotically equivalent to $\frac{k d}{d-2}$ when $k$ goes to infinity. Hence,  to prove Theorem \ref{goodold}, it is sufficient to show that, there exists $c >0$ such that for all $n$ large enough, we have 
$\| Q_{k'_n,n}(x,\cdot) - \pi_n \|_{\TV} \leq n^{-c\eta}$ for at least $n^{1 - 2c \eta}$ vertices $x$ in $V_n$. It is thus sufficient  to show that for all $n$ sufficiently large, 
\begin{equation}\label{objectif}
 \sum_{x\in V_n}\| Q_{k'_n,n}(x,\cdot) - \pi_n \|_{\TV}\leq  n^{1-3c\eta}.
\end{equation}
To take into account the information we have about the multiplicity of eigenvalues, we must be more precise than \eqref{rouf} in our decomposition.
For $\gl \in \Sp(P_n)\setminus \{1 \}$ we let $\alpha_\gl(x)$ be the square norm of the projection
of the vector $\delta_x$ onto $E^{\gl}_n$, the eigenspace of $P_n$ corresponding to $\gl$, that is 
$$\alpha_{\gl}(x):= \max_{f\in E^{\gl}_n } \frac{  f(x)^2}{ \sum_{y\in V_n} f(y)^2}.$$
From the spectral theorem,  we have $\sum_{x\in V_n} \alpha_{\gl}(x)= \mathrm{dim}( E^{\gl}_n)$.
Using the Cauchy-Schwartz inequality and the decomposition on the eigenspaces of $P_n$ we obtain
\begin{equation*}\label{ptitun}
 \left( 2\| Q_{k,n}(x,\cdot) - \pi_n \|_{\TV}\right)^2
 \le  n \sum_{y\in V_n}  \left(Q_{k,n}(x,y)-\frac 1 n \right)^2
 =   \sum_{\gl \in \Sp(P_n)\setminus \{1 \}}  n p_{k}(\gl)^2 \alpha_\gl(x).
\end{equation*}
Hence, averaging over $x$, we have 
\begin{equation*}
\frac 1 n \sum_{x\in V_n} \| Q_{k,n}(x,\cdot) - \pi_n \|_{\TV}^2 \le  \sum_{\gl \in \Sp(P_n)\setminus \{1 \}}  p_{k}(\gl)^2 \mathrm{dim}( E^{\gl}_n).
\end{equation*}
Using the fact (recall \eqref{loop}) that $p_k(\gl)\le (d-1)^{-k/2}(k+1)$ when $\gl\le \rho$, we obtain that 
\begin{equation}\label{leptit}
  \sum_{\gl \in \Sp(P_n)\cap [-\rho,\rho]}  p_{k}(\gl)^2 \mathrm{dim}( E^{\gl}_n) \le  (d-1)^{-k} (k+1)^2 n.
\end{equation}
For $\gl\notin [-\rho,\rho]$, as a consequence of  \eqref{quant} we have
\begin{eqnarray*}\nonumber
 \limsup_{k\to \infty} \frac{1}{k} \log |p_k(\gl)|&\le&  \lim_{k\to \infty} \frac{1}{k} \log U_k(|\gl|/\rho)- \frac{1}{2}\log (d-1)\\
& =&  \log\left(|\gl|/\rho- \sqrt{\left(\gl/\rho\right)^2-1}\right)- \frac{1}{2}\log (d-1).
\end{eqnarray*}
Hence recalling the definition of $I(u)$ in  \eqref{azymp} and the assumption $\rho_n\le 1-\delta$, we arrive at
\begin{multline}\label{legros}
   \limsup_{n\to \infty} \frac{1}{\log n} \log \sum_{\gl \in \Sp(P_n)\setminus [-\rho,\rho]}  p_{k'_n}(\gl)^2 \mathrm{dim}( E^{\gl}_n)
   \\
   \le \sup_{u\in [\rho,1-\delta]}\left[ (1+\eta/2)\left( \frac{2\log\left(u/\rho- \sqrt{\left(u/\rho\right)^2-1}\right)}{\log (d-1)}- 1\right)+I(u)\right],
\end{multline}
(where we have used the upper semi-continuity of $u \mapsto I(u)$). 
Using the assumption \eqref{supoz}, we obtain that the left-hand side of \eqref{legros} is at most $c_0 \eta$ with 
$$c_0:= \frac 1 2  -\frac{\log\left((1-\delta)/\rho- \sqrt{\left((1-\delta)/\rho\right)^2-1}\right)}{\log (d-1)}>0.$$
Together with \eqref{leptit}, it concludes the proof of \eqref{objectif} with $c = c_0/4$. \qed

\begin{remark}[Variant of Theorem \ref{goodold}]\label{rq:flat0}
If $H$ is a vector space of $\dR^{V_n}$ with $\# V_n = n$, we define the {\em flat-dimension} of $H$ as $\dim_0 (H) =  n  \max_{x \in V_n} \| P_H \ind_x \|^2_2$ where $P_H$ is the orthogonal projection onto $H$. This definition implies $\dim_0 (H) \geq  \dim(H)$, $\dim_0 (\mathrm{span}(\pi_n)) = 1$ and $\dim_0 (\mathrm{span}(\ind_x)) = n $.  If the  graph $G_n$ is a transitive graph and $H$ is the invariant vector space generated by $k$ eigenvalues of  $P_{n}$, then we have  $\dim_0 (H) = \dim(H)$. Now, we define $I_0$ exactly as $I$ in \eqref{azymp} except that we replace in  \eqref{azymp} $\dim(E_n^{\lambda})$ by $\dim_0(E_n^{\lambda})$. The proof of Theorem \ref{goodold} actually proves that \eqref{kutof} holds if $\rho_n < 1 - \delta$ and for all $u > \rho$, \eqref{supoz} holds with $I_0$ instead of $I$.
\end{remark}

\section{Covered random walks: proof of Theorem \ref{thegeneral}}

\label{sec:thegeneral}

\subsection{Notation} In this section, we fix a finitely supported probability vector $\bp$ on $\cG$ and we denote by $(\cX_t)_{t \geq 0}$ the random walk on $\cG$ with transition kernel $\cP_{\bp}$ started at $\cX_0 = e$, the unit of $\cG$. The underlying probability distribution of the process $(\cX_t)_{t \geq 0}$ on $\cG^{\dN}$ will be denoted by $\dP (\cdot)$. Finally, if $\varphi_n$ is the action of $\cG$ on $V_n$ as in Theorem \ref{thegeneral}, given a fixed $x\in V_n$, we set $X_t = \varphi_n(\cX_t,x)$. Then $(X_t)_{t\geq 0}$ is a trajectory of the Markov chain on $V_n$  with transition kernel $P_{n,\bp}$ and initial condition $x$.

\subsection{Proof strategy}

Our strategy shares some similarities with that adopted in the Ramanujan case: we try to build a {\em backbone walk} $(Y_s)_{s \geq 0}$ with $Y_s = X_{\tau_s}$ using stopping times which are defined in terms of the walk performed on the covering (our terms backbone comes from the fact that the complete walk can be recovered from the backbone by adding the missing pieces).
The two important properties that our backbone walk must satisfy are the following :
\begin{itemize}
 \item[(i)] At each step, one jumps more or less uniformly to one of $k$ vertices for a large $k$.
 \item[(ii)] The spectral gap associated with the backbone walk is close to the Alon-Boppana bound.
\end{itemize}

The second property is obtained from our assumptions  that the RD property on $\cG$ holds and that the sequence of actions  converges strongly. 
To obtain a backbone walk that jumps close to uniformly on large sets, we perform an explicit construction based on the Green's operator associated to $\cP_{\bp}$.

\medskip

To conclude, we need to relate the mixing time of the backbone walk to that of the original one. This is done using the tools developed in Subsection \ref{subsec:mixstop} which relate mixing times and hitting times. Indeed hitting times of the backbone walk provide an upper bound for the hitting times of the original walk.

\subsection{Construction of the backbone walk from the Green's operator} \label{skelex}

Given $k$ a large integer, our task is to find a stopping time $\tau$ for the process $(\cX_t)$ starting from $\cX_0 = e$ such that $\cX_{\tau}$ is close to be uniformly distributed on a set of $k$ vertices. We denote by $\cA = \{ a_1, \ldots, a_d \}$ be the symmetric support of $\cP_{\bp}$.  We define $\Gamma$ as the Cayley graph of $\cG$ generated by $\cA$. By construction $(\cX_t)$ is a random walk on $\Gamma$.  We are going to choose our stopping time of the form 
\begin{equation}\label{eq:deftau}
\tau := \inf \BRA{ t \geq 0 : \cX_t \notin \cU },
\end{equation}
where $\cU$ is finite and contains $e$. 
Notably, $\tau$ is almost surely finite and $\cX_{\tau}$ is supported on the set $\partial \cU $ defined by 
$$\partial \cU: = \{ g  \notin \cU : a^{-1}_i g \in \cU \hbox{ for some $i \in [d]$}\},$$ which satisfies $\# \partial \cU\le (d-1)\#\cU$.

\medskip

Now let us specify our choice for $U$. 
We let $\cR_{\bp}= (\cI -  \cP_{\bp} )^{-1}$ be the Green's operator associated with $\cP_{\bp}$. The Green's operator is a well defined bounded operator,  since $\rho_\bp < 1$, $1$ is not in the spectrum of $\cP_{\bp}$. 
 We define $u$ to be the image of the coordinate vector at $e$ by $\cR_{\bp}$. The scalar $u(g)$ corresponds to the expected number of visits at $g $ starting from $e$: 
\begin{equation} \label{defuggreen}
u(g) :=  \cR_{\bp} (e,g) = \sum_{t = 0}^ {\infty} \cP_{\bp}^t (e,g).
\end{equation}
Given $k \geq 1 $ we define the set
 \begin{equation}\label{defuu}
\cU := \BRA{ g \in \cG : u (g) >  \frac 1 k }. 
\end{equation}
In the reversible case,
our set $U$ can be interpreted as the complement of a ball around $e$ in the Green metric on $\cG$ associated to $\cP_{\bp}$. The Green metric is defined for all $g,h \in  \cG$ by $d(g,h) = \log \cR_{\bp} (e,e) - \log \cR_{\bp} (g,h)$ and it is closely related to the entropy of the random walk $(\cX_t)$, see \cite{MR2408585}.
Our backbone walk is the induced walk on the successive exit times from $\cU$.
More precisely, we define  $\tau_0 := 0$, $\tau_1 = \tau$ and, for integer $s \geq 1$, $\tau_{s+1} := \inf \{ t \geq \tau_s : \cX_{t} \cX^{-1}_{\tau_s} \notin \cU \}$. We finally 
set $\cY_s:= \cX_{\tau_s}$. 
We  denote by $\cQ$ the transition kernel associated with the  Markov chain $(\cY_s)$: for   any $g, h \in \cG$, 
\begin{equation}\label{firstdefq}
\cQ ( g,h) = \dP ( \cX_{\tau} = h g^{-1} ). 
\end{equation}
By construction,  we have $\cQ = \cP_{\bq}$  where, for all 
$g \in \cG$,
\begin{equation}\label{eq:defQ}
q_g := \cQ(e,g) =\bbP\left[ \cY_{1}=g \right] = \bbP\left[ \cX_{\tau}=g \right].
\end{equation}
We let $(Y_s)$ be the projection of the walk $(\cY_s)$ onto 
$V_n$, $Y_s:=\varphi_n(\cY_s,x)$, and let $Q_n$ denote the associated transition kernel. This Markov chain is our backbone walk. The following result establishes that $U$ has the desired property.  
\begin{proposition}\label{le:UnifU}
Assume that $\rho_{\bp} < 1$. Then there exists a constant $C$ such that 
 for every integer $k\geq 2$, the set defined by Equation \eqref{defuu} satisfies  $e \in \cU$,
 $\#\cU \le C k\log k$, $\Diam (\cU) \le C\log k$ (where $\Diam$ denotes the diameter for the graph distance in $\Gamma$) and
 such that for $\bq$ defined by \eqref{eq:defQ},  
\begin{equation}\label{eq:tandilt}
\forall g \in \partial U,\quad q_g \le \frac{1}{k}.
\end{equation}
\end{proposition}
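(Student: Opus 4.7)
The plan is to verify the four claims of Proposition~\ref{le:UnifU} in increasing order of difficulty, with the key mechanism throughout being translation invariance of $\cP_{\bp}$ on the Cayley graph $\Gamma$. First, $e \in \cU$ is immediate: $u(e) = \sum_{t \geq 0} \cP_{\bp}^t(e,e) \geq 1 > 1/k$ for $k \geq 2$.

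For the diameter bound, I would exploit that $\cP_{\bp}^t(e,g) = 0$ whenever $t < d(e,g)$, the $\Gamma$-distance, and that $\cP_{\bp}^t(e,g) = \langle \ind_g, \cP_{\bp}^t \ind_e \rangle \leq \| \cP_{\bp}^t \|_{2 \to 2} = \sigma_{\bp}(t)^t$. By Gelfand's formula \eqref{eq:gelfand} and the assumption $\rho_{\bp} < 1$, one may fix $r \in (\rho_{\bp},1)$ and $t_0$ so that $\sigma_{\bp}(t)^t \leq r^t$ for $t \geq t_0$. Geometric summation then gives $u(g) \leq C r^{d(e,g)}$ once $d(e,g) \geq t_0$, so $u(g) > 1/k$ forces $d(e,g) \leq C' \log k$, yielding $\Diam(\cU) \leq C \log k$.

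For the cardinality bound, a naive Chebyshev applied to $\| u \|_2^2 \leq \| \cR_{\bp} \|_{2 \to 2}^2$ only gives $\# \cU \leq C k^2$, which is too weak. The trick I would use is to split $u = u_1 + u_2$ with $u_1(g) = \sum_{t=0}^{T} \cP_{\bp}^t(e,g)$ and $u_2 = \cR_{\bp} \cP_{\bp}^{T+1} \ind_e$. The short-time sum satisfies $\sum_g u_1(g) = T+1$, hence $\#\{ u_1 > 1/(2k)\} \leq 2k(T+1)$. The tail satisfies $\| u_2 \|_\infty \leq \| u_2 \|_2 \leq \| \cR_{\bp} \|_{2 \to 2} \, \sigma_{\bp}(T+1)^{T+1}$, which is smaller than $1/(2k)$ once $T = O(\log k)$ by the geometric decay argument above. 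Since $\cU \subset \{u_1 > 1/(2k)\} \cup \{u_2 > 1/(2k)\}$ and the second set is then empty, we obtain $\# \cU \leq 2k(T+1) = O(k \log k)$.

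The boundary bound \eqref{eq:tandilt} is the conceptual heart of the proposition, and is where the strong Markov property enters. For $g \notin \cU$, any visit to $g$ by $(\cX_t)$ starting from $e$ must occur after $\tau$, so decomposing according to $\cX_\tau$ and applying the strong Markov property together with translation invariance of $\cR_{\bp}$ gives
\begin{equation*}
u(g) = \sum_{h \in \partial \cU} q_h \, \cR_{\bp}(h,g) = \sum_{h \in \partial \cU} q_h \, u(h^{-1} g).
\end{equation*}
Every summand is non-negative, so isolating the $h = g$ contribution yields the key estimate $q_g \, u(e) \leq u(g)$. For $g \in \partial \cU$ we have $u(g) \leq 1/k$ by definition, and $u(e) \geq 1$ from the first step, so $q_g \leq 1/k$. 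I expect the main obstacle to be the cardinality estimate, since one must find the right threshold $T$ and combine $L^1$ and $L^2$ information to beat the Chebyshev bound; all other steps are essentially immediate once the splitting idea and the strong Markov identity are in place.
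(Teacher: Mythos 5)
Your proposal is correct; all four claims are established with valid arguments, and two of them diverge in an interesting way from the paper's proof. For the cardinality bound, the paper first localizes in space: it shows $\cU$ is contained in a ball $B_r$ of radius $r = O(\log k)$, then bounds $\#\cU / k \le \sum_{g \in \cU} u(g)$ by splitting the time sum at $br$ and invoking the volume growth $\# B_r \le d(d-1)^{r-1}$ of the Cayley graph. Your head/tail decomposition $u = u_1 + u_2$ instead splits purely in time and plays the $\ell^1$ identity $\sum_g u_1(g) = T+1$ against the $\ell^2 \to \ell^\infty$ bound $\|u_2\|_\infty \le \|\cR_{\bp}\|_{2\to 2}\,\sigma_{\bp}(T+1)^{T+1}$; this is arguably cleaner, avoids any reference to the growth of $\Gamma$, and uses only $\rho_{\bp} < 1$. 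For the boundary bound \eqref{eq:tandilt}, the paper's argument is a one-liner: the probability of exiting at $g$ is at most the probability of ever visiting $g$, hence at most the expected number of visits $u(g) \le 1/k$. Your strong Markov renewal identity $u(g) = \sum_{h \in \partial \cU} q_h\, \cR_{\bp}(h,g)$ reaches the same inequality $q_g\, u(e) \le u(g)$ by isolating the $h=g$ term; it is correct but heavier than needed (and note that with the paper's convention $\cP_{\bp}(x,y) = p_{yx^{-1}}$ the translated Green's function is $u(gh^{-1})$ rather than $u(h^{-1}g)$, though this does not affect the diagonal term you actually use). The remaining claims ($e \in \cU$ and the diameter bound via $u(g) \le C r^{\mathrm{Dist}(e,g)}$) match the paper's proof.
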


\begin{proof}
By definition of the function $u$, we have $e \in \cU$ and for any $g\in \partial U$, 
$
q_g = \bbP\left[ \cX_{\tau}=g \right]\le u(g)\le 1/k,
$
as requested.  We now  check that the cardinality of $\cU$ is controlled by $k\log k$. This is a simple consequence of the assumption that $\rho_{\bp}<1$. We fix $\rho$ such that $\rho_{\bp} < \rho < 1$. Then, from \eqref{eq:gelfand}, there exists $s \geq 1$, such that $\| \cP^t_{\bp} \| \leq \rho^t$ for all $t \geq s$. Hence, there exists a constant $C_0 \geq 1$, such that  $\| \cP^t_{\bp} \| \leq C_0 \rho^t$ for all $t \geq 1$.   Notably, we deduce that for all $g,h \in \cG$,
\begin{equation}\label{eq:Ptinf}
\cP_\bp^t (g,h) \leq  \| \cP_\bp^t \|  \le C_0 \rho^t,
\end{equation}
(see forthcoming Lemma \ref{le:entriesrho} for an improvement of this inequality). Thus, if $\mathrm{Dist}(g,e) $ is the graph distance between $g$ and $e$ in  $\Gamma$,
$$
u(g) = \sum_{t \geq |g|} \cP_\bp^{t} (e,g) \leq C_0 (1-\rho)^{-1}\rho^{\mathrm{Dist}(g,e)}.   
$$
This implies that $U$ is included in the ball $B_r$ of radius 
$r=\lfloor C_1\log k \rfloor $ around the unit $e$. 
For any integer $b \geq 1$, we find
 \begin{eqnarray*}
\frac{\# U}{k} &\le & \sum_{g \in U} u(g)  \leq  \sum_{g \in B_r } \sum_{ t = 1}^\infty \cP_\bp^t (e,g) \\
& \leq & \sum_{t=1}^{ b r}  \sum_{g \in B_r }  \cP_\bp^t (e,g) +  \sum_{t= br +1}^{ \infty}  \sum_{g \in B_r }  \cP_\bp^t (e,g)\\
& \leq &  br + \sum_{t  = br +1} ^\infty (\# B_r )    C_0 \rho^t.
\end{eqnarray*}
We choose $b >0$ such that $(d-1) \rho^b < 1$. Since $\# B_r  \leq d (d-1) ^{r-1}$, we thus find that $\# U / k $ is at most $C_2  \log k$ as requested (with $C_2=2bC_1$). \end{proof}

\subsection{Deducing mixing times from RD property and the strong convergence}\label{sec:deduce}

To compare the original walk with the backbone walk, the first requirement is to control how much time each backbone step requires on average. 
This can be deduced from the definition of the entropy of $\cG$.  Recall the definition of $\tau$ in \eqref{eq:deftau}.

\begin{lemma}\label{le:tau} Assume that $\rho_{\bp} < 1$. 
For any $\veps >0$, there exists $k(\veps) >1$ such that for all integers $k \geq k(\veps)$,
$$
\dE [\tau] \leq ( 1+ \veps) \frac{\log k }{\fh(\bp)}. 
$$
\end{lemma}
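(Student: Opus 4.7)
The plan is to estimate $\dE[\tau]$ by writing $\dE[\tau] = \sum_{t \geq 0} \dP[\tau > t]$ and cutting the sum at an entropic threshold. Since $\tau > t$ forces $\cX_s \in \cU$ for all $s \leq t$, we have $\dP[\tau > t] \leq \dP[\cX_t \in \cU] = \sum_{g \in \cU} \cP_\bp^t(e,g)$. Taking $T_k := \lfloor (1+\veps/2)(\log k)/\fh(\bp) \rfloor$, the short-time contribution $\sum_{t \leq T_k} \dP[\tau > t]$ is bounded by $T_k + 1 \leq (1+\veps/2)(\log k)/\fh(\bp) + 1$.

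For the tail $\sum_{t > T_k} \dP[\cX_t \in \cU]$, I would apply Cauchy--Schwarz in $\ell^2(\cG)$,
\[
\sum_{g \in \cU} \cP_\bp^t(e,g) = \langle \ind_\cU,\, (\cP_\bp^t)^* \delta_e \rangle \leq \sqrt{\#\cU}\;\|\cP_\bp^t\|_{2 \to 2},
\]
and combine the cardinality bound $\#\cU \leq C k \log k$ from Proposition \ref{le:UnifU} with the operator-norm decay $\|\cP_\bp^t\|_{2\to 2} \leq C_0 \rho^t$ (for any fixed $\rho \in (\rho_\bp, 1)$) supplied by \eqref{eq:Ptinf}. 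This gives $\dP[\cX_t \in \cU] \leq C_1 \sqrt{k \log k}\, \rho^t$, so the geometric tail is of order $\sqrt{k \log k}\, \rho^{T_k}$.

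The final step invokes the Avez inequality $\fh(\bp) \geq -2 \log \rho_\bp$ of Lemma \ref{le:hrho}: choose $\rho$ sufficiently close to $\rho_\bp$ so that $(1+\veps/2)|\log \rho|/\fh(\bp) > 1/2$. Then $\rho^{T_k} \leq k^{-1/2 - \delta}$ for some $\delta > 0$, so $\sqrt{k \log k}\,\rho^{T_k} = o(1)$. Combining the two portions of the sum, $\dE[\tau] \leq (1+\veps/2)(\log k)/\fh(\bp) + o(1) \leq (1+\veps)(\log k)/\fh(\bp)$ for $k$ large enough, as claimed.

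The hard part is arranging the inequality $(1+\veps/2)|\log \rho_\bp|/\fh(\bp) > 1/2$, which requires the Avez bound to be close to tight. In the regime $\fh(\bp) > (2+\veps)|\log \rho_\bp|$, where the simple Cauchy--Schwarz estimate alone cannot absorb the $\sqrt{k}$ factor, one must strengthen the tail bound via Shannon--McMillan--Breiman-type concentration \eqref{eq:SMMB} on the Green function, showing that $u(\cX_t)$ is typically of order $e^{-\fh(\bp) t}$ and thus drops below $1/k$ as soon as $t$ exceeds $(1+o(1))(\log k)/\fh(\bp)$; this sharp version of the tail estimate then closes the gap to the claimed constant $(1+\veps)/\fh(\bp)$.
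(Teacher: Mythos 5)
Your three-range decomposition (trivial bound up to the entropic threshold $T_k$, spectral decay for the deep tail) mirrors the paper's own splitting $\dE[\tau]\le t_1+t_2\,\dP(\tau>t_1)+\dE[\tau\IND_{\{\tau>t_2\}}]$, and your Cauchy--Schwarz estimate does handle the deep tail $t\ge C\log k$ correctly. The genuine gap is exactly where you place it, in the critical window, and your proposed fix does not close it. Your primary argument requires $(1+\veps/2)|\log\rho_\bp|/\fh(\bp)>1/2$; since the Avez inequality goes the wrong way for this purpose ($\fh(\bp)\ge 2|\log\rho_\bp|$, hence $|\log\rho_\bp|/\fh(\bp)\le 1/2$), the condition holds only when Avez is within a factor $1+\veps/2$ of being an equality. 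Already for the simple random walk on the free group with two generators one has $\fh(\bp)=\tfrac12\log 3\approx 3.8\,|\log\rho_\bp|$, so for small $\veps$ the Cauchy--Schwarz route fails in the most basic example.

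The sentence you offer as a repair --- that $u(\cX_t)$ is typically of order $e^{-\fh(\bp)t}$ --- is not a consequence of \eqref{eq:SMMB}. Since $u(g)=\sum_s\cP_\bp^s(e,g)\ge\cP_\bp^t(e,g)$, Shannon--McMillan--Breiman only yields the \emph{lower} bound $u(\cX_t)\ge e^{-(1+o(1))\fh(\bp)t}$, whereas to conclude $\cX_t\notin\cU$ you need the \emph{upper} bound $u(\cX_t)\le e^{-(1-o(1))\fh(\bp)t}$; that is the a.s.\ convergence of the Green metric to the entropy, a separate theorem requiring its own proof. The paper avoids it entirely: to control $\dP(\cX_{t_1}\in\cU)$ at $t_1=(1+\veps/2)\log k/\fh(\bp)$ it introduces the SMB typical set $H=\{g:\cP^{t_1}_\bp(e,g)\le e^{-(1-\veps/3)t_1\fh(\bp)}\}$, writes $\dP(\cX_{t_1}\in\cU)\le\sum_{g\in\cU\cap H}\cP^{t_1}_\bp(e,g)+\dP(\cX_{t_1}\notin H)$, and invokes the cardinality bound $\#\cU\le Ck\log k$ of Proposition \ref{le:UnifU} to get $\#\cU\cdot e^{-(1-\veps/3)t_1\fh(\bp)}\le Ck\log k\cdot k^{-(1+\veps/10)}=o(1)$. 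The point is that the full cardinality of $\cU$ (not merely its square root, as in your $\ell^2$ estimate) multiplied by the entropic pointwise bound $e^{-(1-\veps/3)t_1\fh(\bp)}$ is what absorbs the factor $k$, and only the one-sided SMB statement is used. You should replace your sketched step by this union bound over the typical set.
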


\begin{proof} Given $t_1<t_2<\infty$, we decompose the expectation in three contributions ($\tau\le t_1$, $\tau\in (t_1,t_2]$, $\tau>t_2$) and obtain
\begin{equation}\label{eq:3parts}
 \bbE[\tau]\le t_1+ t_2\bbP(\tau>t_1)+ \bbE[\tau \IND_{\{ \tau >t_2\}}].
\end{equation}
We set $$t_1:= ( 1+ \veps/2) \frac{\log k }{\fh(\bp)} \quad  \text{  and } \quad  t_2:= C \log k$$ for some adequate constant $C$, and prove that the second and third term in 
\eqref{eq:3parts} are smaller than $(\gep/4)(\log k /\fh(\bp))$. 
We start by bounding the tail probability of $\tau$. Recall that $\rho_\bp$ is the spectral radius of $\cP_\bp$. We fix $\rho$ such that $\rho_{\bp} < \rho < 1$. From \eqref{eq:Ptinf},
$$
\dP ( \tau > t ) \leq \dP ( \cX_t \in U ) \leq \sum_{g \in U} \cP_\bp^t (e,g) \leq  \# U C_0 \rho^t.
$$
Hence, for any $s>0$,
$$
\dP \PAR{ \tau > \frac{\log( C_0 \# U ) + s }{\log( 1 / \rho)}  } \leq e^{-s}.  
$$ 
By Proposition \ref{le:UnifU}, we deduce for some choice of  constant $C>0$, for any $s>0$ and integer $k \geq 2$, 
\begin{equation*}\label{eq:tailT}
\dP \PAR{ \tau > (C/2) (\log k  + s )  } \leq e^{-s}.  
\end{equation*}
It follows that for any $\veps >0$, for all $k$ large enough 
$$
\dE \left[\tau \IND_{\{ \tau > t_2\} } \right]\leq \frac{1}{\log k} \leq \frac{ \veps \log k }{4 \fh(\bp)}.
 $$

\noindent Now to control the second term, we need to show that 
$\bP(\tau>t_1)\le \veps/(4C\fh(\bp))$.
Set
$$
H = \BRA{ g \in \cG : \cP_\bp^{t_1} (e,g) \leq e^{ - ( 1 -\veps/3)  t_1 \fh(\bp)} },
$$
and arguing as above,
$$
\dP ( \tau > t_1 ) \leq \dP ( \cX_{t_1} \in U  )  \leq \sum_{g \in U \cap H} \cP_\bp^{t_1} (e,g) + \dP ( \cX_t \notin H). 
$$ 
Now, from \eqref{eq:SMMB}, if $k$ is large enough, $\dP ( \cX_t \notin H) \leq \veps/(8C\fh(\bp)) $ and, by Proposition \ref{le:UnifU},
$$
 \sum_{g \in U \cap H} \cP_\bp^{t_1} (e,g)  \leq \# U e^{ -( 1 -\veps/3) t_1 \fh  (\bp)} \leq   (C k \log k ) k^{-(1 + \veps/10)}\le  \veps/(8C\fh(\bp)),
$$
as requested.
\end{proof}

\begin{remark}\label{rq:lbtau}
The above proof actually shows that the conclusion of Lemma \ref{le:tau} is true for any exit time from a set of cardinality $k^{1 + o(1)}$. On the other hand \eqref{eq:SMMB} and the lower bound $u(\cX_t) \geq \cP_\bp^t (e,\cX_t)$ imply easily that $\dE [ \tau ] \geq ( 1 -  \veps) ( \log k  ) / \fh(\bp)$ for all $k$ large enough. Hence, our set $U$ asymptotically maximizes  the mean exit time (among all sets of cardinality $k^{1+ o(1)}$).
\end{remark}

\noindent All ingredients are now gathered to conclude. 
\begin{proof}[Proof of Theorem \ref{thegeneral}]

We fix $\veps \in (0,1)$, $\delta>0$  and $x\in V_n$ arbitrary and prove that for $n$ sufficiently large
$$\Tm_ {n,\bp} (x,\gep)\le   (1+\delta)\log n /\fh(\bp).$$ 
We consider $\tau$ constructed with $U$ from Proposition \ref{le:UnifU} for some large $k$ which we are going to choose depending on $\delta$ but not on $n$, and we set  $m:= \lfloor (1+\delta/4) (\log n ) / \log k \rfloor$.
We use Proposition \ref{stoop} for the walk $X_t:=\varphi_n(\cX_t,x)$ with
$$T = \tau_m, \quad t=t_n=\lfloor (1+\delta)\log n /\fh(\bp)-  \log\log n \rfloor \quad \text{ and } s=s_n=\lfloor \log \log n\rfloor .$$
 We have
  \begin{equation}\label{en3ter}
  \| P^{t_n+s_n}_{n,\bp}(x,\cdot) -\pi\|_{\TV} \le \| Q^m_n -\pi\|_{\TV}+ \bbP[\tau_m>t_n]+ 2 (1 - \sigma_{n,\bp}(s_n))^{-1/3} \sigma_{n,\bp}(s_n)^{2s_n/3}.
 \end{equation}
 We are going to show that for $n$ sufficiently large each of the three terms in the r.h.s.\ are smaller than $\gep/3$.

We start with the third term. To deal with it we prove the following statement  
\begin{equation}\label{lalimsup}
\limsuptwo{n\to \infty}{s \to \infty} \sigma_{n,\bp}(s)\le \rho_{\bp},
\end{equation}
where the limit can be taken over arbitray sequences of $n$ and $s$ which both go to infinity (though it is sufficient for our purpose to know that the $\limsup$ is $<1$).
 If $s = a s_0 + b$ with $a,b,s_1$ non-negative integers, we have $$\| (P_{n,\bp}^s) _{\ind^\perp} \|_{2 \to 2} \leq \| (P_{n,\bp}^{s_0})_{\ind^\perp} \|_{2 \to 2}^{a}\| (P_{n,\bp}^{b})_{\ind^\perp} \|_{2 \to 2}\le \| (P_{n,\bp}^{s_0})_{\ind^\perp} \|_{2 \to 2}^{a}
.$$ Applied to $ a= \lfloor s / s_0 \rfloor$, we deduce that for all $t \geq t_1$,
\begin{equation}\label{eq:sms}
\sigma_{n,\bp} (s) \leq \sigma_{n,\bp}(s_0)^{1 - \frac{s_0-1}{s}}.
\end{equation}
From \eqref{consequence}, we can choose $s_0(\delta)$ and $n_0(\delta)$ such that $\sigma_{n,\bp}(s_0)\le \rho_{\bp}+\delta/2$ and we can deduce from \eqref{eq:sms} that 
$ \sigma_{n,\bp} (s) \leq \rho_{\bp}+\delta$ for $n\ge n_0(\delta)$ and $s\ge s_1 (\delta)$ sufficiently large.
Let us now move the  second and third term in \eqref{en3bis}

The probability $\bbP[\tau_m>t_n]$ is small  as a consequence of the law of large numbers. Indeed choosing $k(\delta)$ sufficiently large Lemma, \ref{le:tau},
guaranties that $t_n\ge (1+\delta/2) m \bbE[\tau]$.
The smallness of $\| Q^m_n -\pi\|_{\TV}$ is obtained using spectral estimates for $Q_n$. Since $\cG$ has the RD property \eqref{eq:RD}, we deduce from Proposition \ref{le:UnifU} that for some constants $C, C'$ (depending on $\bp$)  

\begin{equation*}
\sigma_{\bq} = \| \cP_{\bq} \| \leq C ( \log k)^{C}\left( k^{-2}\# \partial U \right)^{1/2}
\le C' k^{-1/2} (\log k)^{C+1/2}.
\end{equation*}
 Now, the assumption that $(\varphi_n)$ converges strongly applied to $\bq$ implies that for all $n$ large enough (depending on $k$), the singular radius $\sigma_{n,\bq}$  of $Q_n$ satisfies.  
\begin{equation*}\label{zebound}
\sigma_{n,\bq} \leq 2C' k^{-1/2} (\log k)^{C+1/2}.
\end{equation*}
Then, we use the Cauchy-Schwarz inequality and  the usual $\ell^2$-distance bound \eqref{eq:poincare}. We obtain that for any $m\ge 1$,
\begin{equation}\label{spectros}
 \| Q_n^m(x,\cdot)-\pi_n\|_{\TV}\le   \sqrt{n} \| Q_n^m(x,\cdot)-\pi_n\|_{2}\leq  \sqrt{n} \sigma^{m}_{n,\bq},
\end{equation}
and we can conclude replacing $m$ by its value.
\end{proof}

\begin{remark}[Relaxation of the definition of the spectral radius.] \label{rq:flat2}
We may relax a little bit the assumption of strong convergence. If $H$ is a vector subspace of $\dR^{V_n}$ which is invariant under $P_{n,\bp}$, we set $\sigma_{n,\bp}^{H}$ to be the operator norm of $P_{n,\bp}$ on the orthogonal of $H$. Recall the definition of the flat-dimension $\dim_0$ in Remark \ref{rq:flat0}.

 Now, we  say that the sequence of actions $(\varphi_n)$ converges {\em relatively strongly} if for any finitely supported probability vector $\bp\in \ell^2 (\cG)$, we have $\limsup_n \sigma_{n,\bp} <1 $ and $\lim_n \sigma_{n,\bp} ^{H_n} = \sigma_{\bp}$ for a sequence $(H_n)$ of  invariant vector spaces such that $\pi_n \in H_n$ and $\dim_0(H_n) \leq n^{\veps_n}$ with $\lim_n \veps_n =0$. Then Theorem \ref{thegeneral} also holds under this weaker assumption. Indeed, we simply replace the bound  \eqref{spectros} by the bound valid for any  invariant vector space $H$ of $Q_n$ which contains $\pi_n$:
\begin{equation}\label{eq:l2+f}
\| Q_n^m ( x, \cdot)  - \pi_n \|_{\TV} \leq  \sqrt n \|Q_n^m(x,\cdot) -   \pi_n \|_{2} \leq \sqrt n \sigma_{n,\bq}^m \sqrt {\dim_0(H)/n} +  \sqrt n \left( \sigma^{H}_{n,\bq}\right)^m,
\end{equation}
which follows directly from the spectral theorem and the observation that, if $P_H$ is the orthogonal projection onto a vector space $H$, then, $$\|P_H f \|_2 \leq \sum_x |f(x)| \|P_H \ind_x \|_2 \leq \| f\|_1 \sqrt{\dim_0(H) / n}.$$ Finally, we notice that if $\dim_0(H)= n^ {o(1)}$ and $\limsup_n \sigma_{n,\bq} < 1$ then the first term on the right-hand side of \eqref{eq:l2+f} goes to $0$ as soon as $m$ is of order $\log n$.
\end{remark}

\section{Anisotropic random walks: proof of Theorem \ref{aniz}}

\label{sec:aniz}

\subsection{Notation} In this section, we fix an involution  as in Theorem \ref{aniz}. We define $\cG$ as the group obtained by $k$ free copies of $\dZ$ and $l$ free copies of  $Z/ 2 \dZ$ where $k+l$ is the number of equivalence classes of the involution, as detailed below Definition \ref{def:covering}.  We denote by $\cA = \{ g_1, \ldots, g_d\}$ its natural set of generators. The probability vector $\bp = (p_1, \ldots, p_d)$  as in Theorem \ref{aniz} is identified with a vector in $\ell^2 (\cG)$ defined by, for all $i\in [d]$, $p_{g_i} = p_i$ and $p_g = 0$ otherwise. As in the previous section, we denote by $(\cX_t)_{t \geq 0}$ the random walk on $\cG$ with transition kernel $\cP_{\bp}$ started at $\cX_0 = e$, the unit of $\cG$. The underlying probability distribution of the process $(\cX_t)_{t \geq 0}$ on $\cG^{\dN}$ will be denoted by $\dP (\cdot)$. Finally, we define  $\varphi_n$ as the action of $\cG$ on $V_n$ such that for all $i \in [d]$, $S_{g_i} = S_i$ where $S_i$ is as in \eqref{asym} and $S_{g}$ is the permutation matrix associated to $\varphi_n(g,\cdot)$.  Finally, given $x\in V_n$ we set $X_t = \varphi_n(\cX_t,x)$, that is $(X_t)_{t\geq 0}$ is a trajectory of the Markov chain on $V_n$  with transition kernel $P_{n,\bp}$ with initial condition $x$. 

\subsection{Proof strategy and organization}

Our starting point is to use the same stopping time strategy that for the previous section. But instead of using the RD property to conclude,  we are going to show that the generator of the backbone random walk can be 
reasonably approximated by a polynomial in $P_{n,\bp'}$, the generator of the random walk with anisotropy given by $\bp'$.
Our first job is thus to identify the value of $\bp'$ which is possible.
We perform this approximation for the backbone walk on the covering graph $\cG$ (it is then sufficient to use the covering to have an approximation for the walk on $V_n$). 
With the definition of the stopping set $U$ in \eqref{defuu}, a natural object to compare $\cQ$ to is the Green's operator which is expressed as a series in $\cP_{\bp}$. After a suitable truncation, we can in fact obtain a polynomial in  $\cP_{\bp}$ which is a good approximation of $\cQ$ .
By \textsl{good approximation in the $\ell^1$ sense}  we mean that one can find a polynomial $R$ which is such that 
\begin{equation}\label{poly}
\cQ(x,y) \le R(\cP_p)(x,y)
\end{equation}
 for all $x$ and $y$, and also such that 
 $\| R(\cP_p)(e,\cdot) \|_{\ell_1(\cG)}$ is not much larger than $\|\cQ(e,\cdot) \|_{\ell_1(\cG)}$ (which is equal to $1$). 
However, for our spectral computations, we want an approximation of $\cQ$ in the $\ell^2$ sense and it turns out that the above one is not satisfactory. 
In the same way that the Green's operator helps to find a good approximation in $\ell^1$, we want to use the operator  $\cR'_{\bp}$ defined by
$$ \cR'_{\bp}(x,y):= \sqrt{\cR_{\bp}(x,y)}$$
to find a good approximation of $\cQ\in \ell^2(\cG)$.
What makes this approach successful for anisotropic random walk on free groups is that $\cR'_{\bp}$ correspond to a point of the resolvent of \textsl{another anisotropic random walk $\cP_{\bp'}$ for a vector $\bp'$ which has the same support as $\bp$}. Again we can approximate the resolvent operator by a polynomial by an \textsl{ad-hoc} truncation procedure.

Our study of the resolvent of the random walk, presented in Section \ref{sec:resol}, allows us to derive an explicit relation between $\bp$  and $\bp'$.
Then in Section \ref{sec:deduce2} we show that this relation combined with a technical but somehow natural truncation proceedure yields
relevant bound on the kernel of the backbone walk (Proposition \ref{prop:dom}). Combining this with a few $\ell_2$ computations (Lemma \ref{comparaa}) this allows us to prove Theorem \ref{aniz} by adapting the approach used for Theorem \ref{thegeneral}.

\subsection{The relation between $\bp$ and $\bp'$ via resolvent} \label{sec:resol}

 The resolvent of $\cP_\bp$  is defined for $z \notin \sigma(\cP_{\bp})$ by
$$
\cR^z_\bp = ( z \cI - \cP_{\bp})^{-1}.
$$
 In the above expression, $\cI$ is the identity operator on $\ell^2 (\cG)$.
Since we are  particularly interested in the behavior of the operator $\cR^{z}_\bp$ as $|z| > \rho_{\bp}$ approaches $\rho_{\bp}$, we consider the following alternative definition  of $\cR^z_\bp(x,y)$ (which coincides with one above for $|z|>\rho_{\bp}$),
\begin{equation}\label{alteres}
 \cR^z_\bp(x,y):= \sum_{t\ge 0} z^{-(t+1)} \cP^t_{\bp}(x,y).
\end{equation}
As shown in \cite{MR824708} (see Lemma \ref{le:recres} below), the above series converges for all $x$ and $y$ if and only if $|z|\ge \rho'_{\bp}$ where 
$(\rho'_{\bp})^{-1}$ is the radius of convergence of the series $\cP^t_{\bp}(e,e)$. It is given by the following 
generalization of the Akemann-Ostrand formula (see \cite[Equation (2.6)]{MR824708}),
\begin{equation}\label{eq:AOnonrev}
\rho'_\bp = \min_{s > 0} \BRA{  2 s + \sum_{i=1}^d \PAR{ \sqrt{ s^2 + p_i p_{i^*} }  -s } },
\end{equation}
and satisfies $\rho'_{\bp}\le \rho_{\bp}$ (with equality in the symmetric case $p_i=p^*_i$ for all $i\in [d]$).

As our group is non-amenable, 
the vector $(\cR^1_{\bp}  (e ,x ))_{x \in \cG}$ is very close to be integrable 
(it does not belong to $\ell^1(\cG)$ but $(\cR^z_{\bp}  (e ,x ))_{x \in \cG}$ is in $\ell^{1} (\cG)$ for all $z > 1$), 
while $(\cR^{\rho_{\bp}}_{\bp} (e ,x ))_{x \in \cG}$ is close to be in $\ell^2(\cG)$ in the same sense.
What we prove in this section  (and which is made plausible by the observation above) is the following: 

\begin{proposition}\label{prop:ptop}
Given $\bp$ a probability vector on $[d]$ such that  \eqref{hippo} holds, there exists a unique probability vector $\bp'$ and a real $C = C (\bp)$ such that for all $x,y \in \cG$,
\begin{equation*}\label{propox}
\cR_{\bp}^1(x,y)= C \left(\cR^{\rho_{\bp'}}_{\bp'}(x,y)\right)^2.
\end{equation*}
\end{proposition}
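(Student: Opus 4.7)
The Cayley graph of $\cG$ with respect to its natural generating set $\cA=\{a_1,\dots,a_d\}$ is the $d$-regular tree $\cT_d$, and both walks $\cP_\bp$ and $\cP_{\bp'}$ are nearest-neighbor walks on it. The proof will combine the classical tree-level multiplicative factorization of Green functions with an explicit inversion of the resulting system of first-passage equations. By right-invariance of $\cP_\bp$ and $\cP_{\bp'}$, both sides of the claimed identity depend on $(x,y)$ only through $yx^{-1}$, so it suffices to prove the identity at $x=e$. For $g\in\cG$ with reduced expression $g=b_1\cdots b_n$, the strong Markov property applied at the successive first-passage times along the unique geodesic from $e$ to $g$ in $\cT_d$ yields
\begin{equation*}
 \cR^z_\bp(e,g) \;=\; \cR^z_\bp(e,e)\prod_{k=1}^n F^\bp_{b_k}(z),
\end{equation*}
where $F^\bp_a(z):=\sum_{t\ge 1} z^{-t}\dP_e[T_a=t]$ is the first-passage generating function to the neighbor $a\in\cA$. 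The identity to prove therefore reduces to one scalar relation at $g=e$ (which will determine $C$) together with the per-generator matching
\begin{equation*}
F^\bp_a(1) \;=\; \bigl(F^{\bp'}_a(\rho_{\bp'})\bigr)^2 \quad \text{for every } a\in\cA.
\end{equation*}

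Conditioning on the first step of the walk and using tree connectivity through $e$ shows that the $F^\bp_j$ satisfy the system
\begin{equation*}
F^\bp_j(z)\,\Bigl(z-\sum_{i\ne j}p_i F^\bp_{i^*}(z)\Bigr)\;=\;p_j,
\end{equation*}
and analogously for $\bp'$. Setting $\psi_j:=\sqrt{F^\bp_j(1)}$, which is strictly positive under \eqref{hippo} and non-amenability, and requiring $F^{\bp'}_j(z^*)=\psi_j$ for every $j$, substitution into the $\bp'$-recursion gives $p'_j=\psi_j(z^*-\Lambda)/(1-\psi_j\psi_{j^*})$ with $\Lambda:=\sum_i p'_i\psi_{i^*}$. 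Imposing $\sum_j p'_j=1$ together with the defining equation for $\Lambda$ reduces to a $2\times 2$ linear system in $(z^*-\Lambda,\Lambda)$ whose unique solution is
\begin{equation*}
z^* \;=\; \frac{1+B}{A},\qquad p'_j \;=\; \frac{\psi_j}{A(1-\psi_j\psi_{j^*})},\qquad A:=\sum_j\frac{\psi_j}{1-\psi_j\psi_{j^*}},\quad B:=\sum_j\frac{\psi_j\psi_{j^*}}{1-\psi_j\psi_{j^*}}.
\end{equation*}
Positivity of each $p'_j$ and the equality $\supp(\bp')=\supp(\bp)$ follow from $\psi_j>0$ together with the estimate $\psi_j\psi_{j^*}<1$, obtained by reading $F^\bp_j(1)F^\bp_{j^*}(1)$ as the probability of a round trip $e\to a_j\to e$ and bounding it by the first-return probability at $e$, which is strictly less than $1$ since $\cG$ is non-amenable.

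The main obstacle will be identifying the algebraically produced $z^*$ with the spectral radius $\rho_{\bp'}$ of the constructed $\cP_{\bp'}$. By construction $(\psi_j)$ is a solution of the $\bp'$-first-passage system at $z=z^*$; what remains is to check that this is the physical (positive, decreasing) branch and that $z^*$ sits at the boundary of the convergence region of $z\mapsto\cR^z_{\bp'}$. I would verify this by showing that for $z>z^*$ the recursion admits a unique analytic decreasing solution $F^{\bp'}_j(z)$ converging to $\psi_j$ as $z\downarrow z^*$, and that at $z=z^*$ the Jacobian of the vector-valued recursion is singular---which is precisely the square-root branch point of $G_{\bp'}$ at $\rho_{\bp'}$ encoded in the Akemann--Ostrand formula \eqref{eq:AOnonrev}. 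In the reversible case \eqref{eq:sympi} this amounts to a direct check that the critical $s$ minimizing \eqref{eq:AOnonrev} for $\bp'$ corresponds to our $z^*$; the general non-reversible case follows via the extension from \cite{MR824708}. Once $z^*=\rho_{\bp'}$ is established, the constant $C=\cR^1_\bp(e,e)/\cR^{\rho_{\bp'}}_{\bp'}(e,e)^2$ is read off from the $g=e$ case, and uniqueness of $\bp'$ is automatic: the positive square root $\psi_j=+\sqrt{F^\bp_j(1)}$ is forced by $F^{\bp'}_j\ge 0$, and the explicit formulas above then determine $\bp'$ uniquely.
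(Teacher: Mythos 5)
Your setup is essentially the paper's: the multiplicative factorization of the Green function along geodesics of $\cT_d$, the first-passage recursion (your $F^\bp_a(z)$ is exactly $p_a\gamma^z_a$ of Lemma \ref{le:recres}), the square-root matching $F^{\bp'}_a(\rho_{\bp'})=\sqrt{F^\bp_a(1)}$, and the resulting formula for $\bp'$ all agree with Proposition \ref{prop:ptoprime}. The gap is in the step you yourself flag as the main obstacle: identifying your algebraically produced $z^*$ with the spectral radius $\rho_{\bp'}$. You propose to do this by showing that the Jacobian of the first-passage recursion is singular at $z^*$, ``the square-root branch point encoded in the Akemann--Ostrand formula \eqref{eq:AOnonrev}.'' But that branch point is $\rho'_{\bp'}$, the reciprocal of the radius of convergence of the return-probability series $\sum_t z^{-t}\cP^t_{\bp'}(e,e)$, which the paper is careful to distinguish from the $\ell^2$ spectral radius $\rho_{\bp'}$: one only has $\rho'_{\bp'}\le\rho_{\bp'}$, with equality in the reversible case. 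In the non-reversible case the resolvent entries at $z=\rho_{\bp'}$ are strictly inside the convergence region of the pointwise series, the Jacobian there is \emph{not} singular, and your criterion would identify the wrong threshold. So the verification plan fails precisely in the generality the proposition claims.

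What the paper uses instead, and what is missing from your argument, is an $\ell^2$ characterization of $\rho_{\bp'}$: the row $(\cR^z_{\bp'}(e,x))_{x}$ is square-summable iff $z>\rho_{\bp'}$, and via the combinatorial summability criterion (Lemma \ref{le:simplelemma}) this translates into the identity $\sum_i (r_i')^2(1-(r'_{i^*})^2)/(1-(r_i'r'_{i^*})^2)=1$ at $z=\rho_{\bp'}$ (Lemma \ref{le:qnormal}). The punchline --- which your proposal never reaches --- is that this identity holds \emph{automatically} for your $r_i'=\psi_i=\sqrt{F^\bp_i(1)}$, because after squaring it is exactly the $\ell^1$-normalization identity $\sum_i a_i(1-a_{i^*})/(1-a_ia_{i^*})=1$ satisfied by the $\bp$-resolvent at $z=1$. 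One still has to rule out that the constructed solution sits on the wrong branch ($s<x_{\min}$ in the paper's notation); the paper does this by a monotonicity-plus-contradiction argument against the same $\ell^2$ criterion. Replacing your branch-point step by this $\ell^2$-summability mechanism would complete the proof; as written, the identification $z^*=\rho_{\bp'}$ is unproved (and your proposed route to it is incorrect outside the reversible case).
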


To our knowledge, this quadratic identity has not been discovered before. It is of fundamental importance in what follows: it establishes a relation between the vector $(\cR^{1}_\bp (e,x))_{x \in \cG}$ - which, as seen in Section \ref{sec:deduce} is intimately connected with the entropy $\fh(\bp)$ - and $(\cR_{\bp'}^{\rho_{\bp'}}(e,x))_{x \in \cG}$, the resolvent of $\cP_{\bp'}$ at its spectral edge. As a consequence of the tree structure  of the Cayley graph  associated with $(\cG, \cA)$, 
that can be identified with the regular tree $\cT_d$,
the resolvent admits a simple ``multiplicative'' expression (this is a well established result that can be found e.g. in  \cite{MR824708} or \cite{MR1219707}). Indeed, $\cR^{z}_{\bp} ( e ,x )$ can be obtained by multiplying  $\cR^{z}_{\bp} ( e ,e)$  by a quantity $r^z_{i}(\bp)$ for each edge of type $i$ which is crossed on the minimal path linking $e$ to $x$. Hence to prove Proposition \ref{prop:ptop}, we need to find a probability vector $\bp'$ such that for all $i \in [d]$,
$r_i^1(\bp)=\left(r_{i}^{\rho_{\bp'}}(\bp')\right)^2$.

\medskip

We need some extra-notation to give an expression for the coefficients  $r_i$.
Let us denote by $\cR^{z}_{\bp,i}$ the resolvent of the operator $\cP_{\bp,i} = \cP_{\bp} -( p_i  \delta_{e} \otimes  \delta_{g_i} + p_{i^{*}} \delta_{g_i} \otimes \delta_{e})$ (defined as in \eqref{alteres} for $|z|\ge \rho'_{\bp}$) obtained from $\cP_{\bp}$ by removing the transitions between $e$ and $g_i$.  
Finally we let $\gamma^z_{i}$ be the diagonal coefficient of $\cR^z_{\bp,i} $:
\begin{equation}\label{gammasym}
\gamma^z_{i} = 
\gamma^z_{i}(\bp) :=  \cR^{z}_{\bp,i}(e,e)= \sum_{t\ge 0} \sum_{t\ge 0} z^{-(t+1)} \cP^t_{\bp,i}(e,e).  
\end{equation}
Note that  $\cP^t_{\bp}(e,e)$ is a function of $p_i p_{i^*}$, $i\in [d]$ (since every transitions from $e$ to $e$ involves the same number of multiplication by $g_i$ and $g_{i^*}$. It implies in particular that $\gamma^z_{i}=\gamma^z_{i^*}$. 
%
%
Note that the reference in \cite{MR824708} only treats the case $q_2=d/2$  and assumes that every coordinates are positive. The positivity assumption however is not used in the proof (save for the fact that the return probability to zero decays exponentially, which is ensured by \eqref{hippo}). 
The proof also adapts to arbitrary values of $q_1$ and $q_2$  without any change
(cf. \cite[Proposition 3.4]{MR1219707} which only deals with the case $q_1=d$).\begin{lemma}[see  Lemma 2 and Lemma 3 in \cite{MR824708}, \cite{MR1219707}]\label{le:recres}
For any reduced word $x = g_{i_1}\dots {g_{i_n}}\in \cG$ written in reduced form (that is $g_{i_{k+1}} \ne g_{i_k^*}$ for all $k$) and $|z|\ge \rho'_{\bp}$ (recall \eqref{eq:AOnonrev})
$$
\cR^{z}_{\bp} ( e ,x ) = \cR^{z}_{\bp} ( e ,e )\prod_{t=1}^n p_{i_t}\gamma^z_{i_t} . 
$$
Moreover, 
\begin{equation}\label{eq:recresREV}
\cR^{z}_{\bp} ( e ,e )  = \PAR{ z - \sum_{j\in [d]}p_{j^{*}}  p_{j}\gamma^z_{j}}^{-1} \AND \gamma^z_i  = \PAR{ z - \sum_{j \ne i^*} p_{j^{*}} p_{j}\gamma^z_{j}}^{-1}.
\end{equation}
\end{lemma}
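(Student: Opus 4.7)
My plan is to exploit the fact that the Cayley graph of $\cG=\cG^{(q_1,q_2)}_{\mathrm{free}}$ with its natural generating set $\cA$ is the $d$-regular tree $\cT_d$, together with the left-translation invariance of $\cP_{\bp}=\sum_g p_g\lambda(g)$: for every $h\in\cG$ one has $\cP_\bp^t(hx,hy)=\cP_\bp^t(x,y)$, and the same identity holds for $\cR^z_\bp$ and for all the first-passage and first-return generating functions appearing below. Let $F^z(y,w)$ denote the first-passage generating function from $y$ to $w$, that is, the sum over $t\ge 1$ of $z^{-t}$ times the probability that the walk started at $y$ first hits $w$ at time $t$. The strong Markov property applied at the first hitting time of $x$ gives $\cR^z_\bp(e,x)=F^z(e,x)\cR^z_\bp(x,x)=F^z(e,x)\cR^z_\bp(e,e)$, the second equality by translation. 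Because the Cayley graph is a tree, any trajectory from $e$ to $x=g_{i_1}\cdots g_{i_n}$ must traverse the unique geodesic $e, g_{i_1}, g_{i_1}g_{i_2},\dots, x$, which forces the factorization $F^z(e,x)=\prod_{t=1}^{n}F^z(v_{t-1},v_t)=\prod_{t=1}^{n}F^z(e,g_{i_t})$, where $v_t=g_{i_1}\cdots g_{i_t}$ and the second equality uses translation invariance.

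The key identity is then $F^z(e,g_i)=p_i\gamma^z_i$. Tree geometry again: every neighbor of $g_i$ other than $e$ lies in the subtree beyond the edge $(e,g_i)$, so any first passage from $e$ to $g_i$ must conclude with a direct step $e\to g_i$, of probability $p_i$ in the convention where $\cP_\bp(e,g_i)=p_i$, preceded by an excursion from $e$ back to $e$ that never visits $g_i$. But such excursions are exactly the returns to $e$ of the modified walk $\cP_{\bp,i}$ obtained by deleting the edge $(e,g_i)$, whose generating function is $\gamma^z_i$ by definition. Combining everything yields the first identity of the lemma.

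For the recursive formulas, I would use the classical first-return identity $\cR^z_\bp(e,e)=(z-zF^z_{\mathrm{ret}}(e,e))^{-1}$, where $F^z_{\mathrm{ret}}(e,e)$ is the first-return generating function at $e$, together with the first-step decomposition $F^z_{\mathrm{ret}}(e,e)=z^{-1}\sum_j p_j F^z(g_j,e)$. The mirror of the previous argument gives $F^z(g_j,e)=p_{j^*}\gamma^z_{j^*}=p_{j^*}\gamma^z_j$, the last equality being the identity $\gamma^z_j=\gamma^z_{j^*}$ observed just after \eqref{gammasym}; inserting this yields the formula for $\cR^z_\bp(e,e)$. The recursion for $\gamma^z_i$ follows by running the same first-return decomposition inside the modified walk $\cP_{\bp,i}$: the first step from $e$ can no longer land on $g_i$, so the sum is restricted to $j\ne i$, and for $j\ne i$ the excursions at $g_j$ avoiding $e$ are unchanged because they live in a subtree of $\cT_d$ that does not contain the deleted edge. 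Finally, the symmetry $p_j p_{j^*}\gamma^z_j=p_{j^*}p_j\gamma^z_{j^*}$ lets one replace the restriction $j\ne i$ by $j\ne i^*$, matching the statement.

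The main obstacle is analytic rather than combinatorial: the identities are claimed for all $|z|\ge \rho'_\bp$, while standard spectral manipulations of $(z\cI-\cP_\bp)^{-1}$ are legitimate only when $|z|>\rho_\bp$. This is handled by observing that every quantity appearing in the argument is a power series in $1/z$ with non-negative coefficients; absolute convergence of these series on the closed disc of radius $1/\rho'_\bp$ is exactly the content of the generalized Akemann--Ostrand formula \eqref{eq:AOnonrev}, and the combinatorial first-passage identities then hold termwise for all $|z|\ge\rho'_\bp$, bypassing the spectral issue.
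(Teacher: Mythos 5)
Your argument is correct, and it is worth noting that the paper itself offers no proof of this lemma: it is quoted from \cite{MR824708} and \cite{MR1219707}, with only a remark on why the positivity and $q_1$ assumptions of those references are harmless. Your first-passage/generating-function derivation is precisely the standard argument of those references, and every step checks out: the cut-vertex factorization of $F^z(e,x)$ along the geodesic, the identification $F^z(e,g_i)=p_i\gamma^z_i$ via the last-step decomposition and the equality (on a tree, for paths started at $e$) between ``avoiding the vertex $g_i$'' and ``avoiding the edge $\{e,g_i\}$'', the first-return recursions, and the use of $\gamma^z_j=\gamma^z_{j^*}$ to pass from the restriction $j\ne i$ to $j\ne i^*$. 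The only imprecision is at the very end: the extension to the boundary $|z|=\rho'_{\bp}$ is not ``exactly the content'' of the Akemann--Ostrand formula \eqref{eq:AOnonrev}, which merely identifies the radius of convergence; the convergence of the series \eqref{alteres} \emph{at} $|z|=\rho'_{\bp}$ ($\rho$-transience) is a separate fact, which the paper also takes from \cite{MR824708}. Since all the generating functions involved are power series in $1/z$ with non-negative coefficients and your identities hold termwise, they do extend to wherever the series converge, so this does not affect the validity of the proof — only the attribution of the boundary-convergence fact.
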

\noindent The above lemma allows to compute explicitly the resolvent operator. 

\begin{lemma}\label{le:sqz}
We assume that \eqref{hippo} holds and that $z \in [\rho'_{\bp},\infty)$. If $s=s_z = 1 / (2\cR^{z}_{\bp} ( e ,e  ))$ and $r_i= p_i \gamma^z_i$, we have $r_i r _{i^*} < 1$, 
$$
r_i = \frac{\sqrt{s^2 + p_i p_{i^*} } - s} {p_{i^*}} \,\text{ when }\, p_{i^*}>0,  \quad r_i = \frac{p_i}{2s}\,\text{ when }\, p_{i^*} = 0  \AND p_i = \frac{2 s r_i}{1-r_i r_{i^*}}.
$$
Moreover, $s_z $ is the largest real solution of the following equation in $x$
$$
 z =   2 x + \sum_{j = 1}^{d} \PAR{ \sqrt{ x^2 + p_j p_{j^*} }  -x}.
$$ 
\end{lemma}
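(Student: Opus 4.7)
My starting point is Lemma \ref{le:recres}, which provides the nonlinear system satisfied by $\cR_\bp^z(e,e)$ and the $\gamma_i^z$'s. Subtracting the two identities in \eqref{eq:recresREV} and using $i^{**} = i$, I obtain the clean relation
\begin{equation*}
(\gamma_i^z)^{-1} = (\cR_\bp^z(e,e))^{-1} + p_i p_{i^*} \gamma_{i^*}^z = 2s + p_i p_{i^*} \gamma_{i^*}^z,
\end{equation*}
where I set $s = 1/(2\cR_\bp^z(e,e))$. Note that $s > 0$ for $z \geq \rho'_\bp$ since $\cR_\bp^z(e,e) = \sum_t z^{-(t+1)} \cP_\bp^t(e,e) > 0$. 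I then introduce $r_i = p_i \gamma_i^z$. If $p_{i^*} = 0$ (so $p_i > 0$ by \eqref{hippo}), the recursion collapses to $\gamma_i^z = 1/(2s)$, hence $r_i = p_i / (2s)$, giving the degenerate formula immediately.

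When $p_i, p_{i^*} > 0$, I multiply the recursion by $p_i \gamma_i^z$ to obtain
\begin{equation*}
p_i = p_i (\gamma_i^z)^{-1} \gamma_i^z = (2s + p_i p_{i^*} \gamma_{i^*}^z) r_i = 2 s r_i + r_i r_{i^*} p_i,
\end{equation*}
which yields the identity $p_i (1 - r_i r_{i^*}) = 2 s r_i$. Applying the symmetric identity for $i^*$ and multiplying the two, I eliminate $p_i$ and $p_{i^*}$ to obtain the quadratic $p_i p_{i^*} u^2 - (2 p_i p_{i^*} + 4s^2) u + p_i p_{i^*} = 0$ in $u = r_i r_{i^*}$, whose two roots multiply to $1$. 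Only the root smaller than $1$ is admissible (it corresponds to the solution with $r_i, r_{i^*}$ positive), and a direct computation shows it equals $(\sqrt{s^2 + p_i p_{i^*}} - s)^2 / (p_i p_{i^*})$, so that $1 - u = 2s/(s + \sqrt{s^2 + p_i p_{i^*}})$ after rationalizing. Substituting back into $r_i = p_i (1-u)/(2s)$ and rationalizing once more gives $r_i = (\sqrt{s^2 + p_i p_{i^*}} - s) / p_{i^*}$, and the strict inequality $r_i r_{i^*} < 1$ comes for free from the computation of $u$.

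Next, I plug these expressions back into the first equation of \eqref{eq:recresREV}. Writing $p_{j^*} p_j \gamma_j^z = p_{j^*} r_j$ when $p_j > 0$ (and the whole term is $0$ when $p_j = 0$), the contribution of index $j$ to $\sum_j p_{j^*} p_j \gamma_j^z$ equals $\sqrt{s^2 + p_j p_{j^*}} - s$ whenever $p_j p_{j^*} > 0$, and $0$ when $p_j p_{j^*} = 0$ (in which case $\sqrt{s^2} - s = 0$ anyway). Plugging into $2s = z - \sum_j p_{j^*} p_j \gamma_j^z$ yields exactly the asserted equation $z = 2s + \sum_{j=1}^d (\sqrt{s^2 + p_j p_{j^*}} - s)$.

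The last and most delicate point is to identify $s_z$ as the \emph{largest} real solution. The function $F(x) = 2x + \sum_j (\sqrt{x^2 + p_j p_{j^*}} - x)$ on $(0,\infty)$ is the same one appearing in the Akemann–Ostrand formula \eqref{eq:AOnonrev}, so it is strictly convex, decreases from its value at $0^+$ to its minimum $\rho'_\bp$ at some $x^* > 0$, and increases back to $+\infty$; therefore $F(x) = z$ has exactly two real solutions for $z > \rho'_\bp$ (and a double root at $z = \rho'_\bp$). To show $s_z$ lies on the right branch, I use continuity of $z \mapsto s_z$ on $[\rho'_\bp, \infty)$ (which follows from continuity of $z \mapsto \cR_\bp^z(e,e)$ via \eqref{alteres}) together with the asymptotic $\cR_\bp^z(e,e) \sim 1/z$ as $z \to \infty$, which gives $s_z \sim z/2 \to \infty$. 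Since $s_z$ cannot cross $x^*$ without passing through $z = \rho'_\bp$, it stays on the branch $\{x \geq x^*\}$, which is the branch of the largest root. This monotonicity/continuity argument is the one nontrivial step; the rest is algebraic bookkeeping on the recursions from Lemma \ref{le:recres}.
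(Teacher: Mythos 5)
Your proof is correct and follows essentially the same route as the paper: both derive the formulas from the recursions of Lemma \ref{le:recres} (the paper solves the quadratic $p_i p_{i^*}(\gamma_i^z)^2 + 2s\gamma_i^z - 1 = 0$ directly, using $\gamma_i^z = \gamma_{i^*}^z$, whereas you eliminate through the product $u = r_i r_{i^*}$ — an equivalent computation, with the small caveat that your admissibility criterion should be phrased as ``$u\ge 1$ would force $r_i\le 0$ via $p_i(1-u)=2sr_i$'' rather than positivity of the roots, since both roots of your quadratic are positive), and both identify $s_z$ with the largest root via convexity of $f$, the paper using monotonicity of $z\mapsto s_z$ where you use continuity together with the asymptotic $s_z\sim z/2$. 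The only cosmetic omission is the case $p_i = 0 < p_{i^*}$, where $r_i=0$ and the stated formulas hold trivially.
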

\begin{proof}
From \eqref{eq:recresREV}, and the fact that $\gamma_i=\gamma_{i^*}$ (recall \eqref{gammasym}) we have $2s = z - \sum_j p_{j^*} r_j$ and $p_i p_{i^{*}}\gamma^2_i + 2s \gamma_i - 1 =0$.
The inequality $r_i r_{i^*} = p_i p_{i^{*}}\gamma^2_i <1$ and the formulas follow (also in the case $p_i p_{i^{*}}=0$). It remains to prove that $s$ is the largest solution of $f(x) = z$ with $f(x)= 
 2 x + \sum_{j} \PAR{ \sqrt{ x^2 + p_j p_{j ^*} }  -x } $. Since $f$ is strictly convex  and has a unique minimizer $x_{\min}\ge 0$ such that $f(x_{\min})=\rho'_{\bp}$. The equation $f(x)= z$ has either zero, one or two solutions according to whether $z<\rho'_{\bp}$, $z=\rho'_{\bp}$, $z>\rho'_{\bp}$. In the latter case, we let $x_- (z) <x_{\min}< x_+ (z)$ denote the two solutions. As $s_z$ is an increasing function of $z$ we have $s_z>x_{\min}$ for $z> \rho'_{\bp}$ and thus $s(z)=x_+ (z)$. \end{proof}

\begin{lemma}\label{le:qnormal}
Let $z \in [\rho_{\bp},\infty)$ and $r_i = p_i \gamma_i ^z$. We have
$$
\sum_{i=1}^d \frac{r_i(1-r_{i^*})}{1-r_ir_{i^*}} = 1   \Leftrightarrow z =1 \quad \quad
\text{and} 
\quad \quad
\sum_{i=1}^d \frac{r^2_i(1-r_{i^*}^2)}{1-(r_ir_{i^*})^{2}} = 1 \Leftrightarrow  z =\rho_\bp.  
$$
\end{lemma}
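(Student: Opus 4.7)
My plan is to recognise each equivalence as the statement that a suitable generating function of the form $\tilde F(z)=\sum_{g\in\cG} T(e,g)^k$ (for $k=1,2$) blows up, where $T(e,g)=\prod_{t=1}^{n} r_{i_t} = \cR_\bp^z(e,g)/\cR_\bp^z(e,e)$ by Lemma~\ref{le:recres}, and then to identify the corresponding critical value of $z$. The formula $p_i=2sr_i/(1-r_ir_{i^*})$ from Lemma~\ref{le:sqz} is the only extra input I need for the first part; the tree recursion for $\tilde F_2$ together with a spectral-radius argument will give the second.

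For the first identity I proceed by direct algebraic manipulation. Inserting $p_i=2sr_i/(1-r_ir_{i^*})$ into $\sum_i p_i=1$ yields $\sum_i r_i/(1-r_ir_{i^*})=1/(2s)$. The same expression for $p_{i^*}$ combined with the defining equation $2s = z-\sum_j p_{j^*}r_j$ gives $\sum_j r_jr_{j^*}/(1-r_jr_{j^*})=(z-2s)/(2s)$. Subtracting these yields
\[
\sum_i \frac{r_i(1-r_{i^*})}{1-r_ir_{i^*}} \;=\; 1+\frac{1-z}{2s},
\]
which equals $1$ precisely when $z=1$, since $s>0$ under the standing assumption \eqref{hippo}. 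Equivalently, this says that the generating function $\tilde F_1(z) := \sum_g \cR_\bp^z(e,g)/\cR_\bp^z(e,e)$ blows up exactly at $z=1$, mirroring the obvious identity $(\cR_\bp^z\mathbf{1})(e)=1/(z-1)$.

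For the second identity I apply the same bookkeeping to $\tilde F_2(z):=\sum_g T(e,g)^2 = \|\cR_\bp^z\delta_e\|_2^{2}/\cR_\bp^z(e,e)^{2}$. Letting $F_i^{(2)}$ denote the weighted sum over reduced words beginning with the letter $g_i$, the tree structure gives the coupled system $F_i^{(2)} = 1+\sum_{j\ne i^*}r_j^2 F_j^{(2)}$. Pairing the equations for $i$ and $i^*$ and eliminating $F_{i^*}^{(2)}$ yields $F_i^{(2)} = (1+S^{(2)})(1-r_{i^*}^2)/(1-(r_ir_{i^*})^2)$ with $S^{(2)}=\sum_i r_i^2 F_i^{(2)}$; feeding this back gives
\[
1-\frac{1}{\tilde F_2(z)} \;=\; \sum_i \frac{r_i^2(1-r_{i^*}^2)}{1-(r_ir_{i^*})^2}.
\]
Hence the second identity is equivalent to $\tilde F_2(z)=\infty$, i.e.\ to $\cR_\bp^z\delta_e \notin \ell^2(\cG)$, and the task is to show that this occurs exactly at $z=\rho_\bp$.

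The final step, which is the main obstacle, is to pin down this critical value. One direction is immediate: for $z>\rho_\bp$ the point $z$ lies outside the spectrum of $\cP_\bp$ (contained in the closed disc of radius $\rho_\bp$ by Gelfand's formula), so $\cR_\bp^z$ is a bounded operator on $\ell^2(\cG)$ and in particular $\cR_\bp^z\delta_e\in\ell^2$, giving $A(z)<1$ throughout $(\rho_\bp,\infty)$. Since each $r_i$ is strictly decreasing in $z$ by Lemma~\ref{le:sqz}, both $\tilde F_2$ and $A$ are strictly monotone, so the locus $\{A=1\}$ reduces to a single point $z_\star\le \rho_\bp$. The harder reverse inequality requires $\|\cR_\bp^z\delta_e\|_2\to\infty$ as $z\searrow\rho_\bp$. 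I plan to deduce this from the classical fact that, for random walks on free products, $\delta_e$ is a cyclic vector and the $\ell^2$-radius of convergence of the series $\sum_t z^{-(t+1)}\cP_\bp^t\delta_e$ coincides with the spectral radius, i.e.\ $\limsup_t \|\cP_\bp^t\delta_e\|_2^{1/t}=\rho_\bp$. In the reversible case this follows from the square-root edge behaviour of the spectral density of $\cP_\bp$ on $[-\rho_\bp,\rho_\bp]$, yielding a divergent integral $\int d\mu(\lambda)/(\rho_\bp-\lambda)^2$; in the genuinely non-reversible case the same conclusion is obtained by applying this analysis to $(z\cI-\cP_\bp^*)(z\cI-\cP_\bp)$, or equivalently by a Perron–Frobenius argument on the non-negative transfer matrix $B_{ij}=r_j^2\mathbf{1}_{j\ne i^*}$ whose spectral radius grows continuously from $0$ at $z=\infty$ and whose crossing of $1$ occurs precisely at $z=\rho_\bp$.
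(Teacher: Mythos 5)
Your treatment of the first equivalence is correct and genuinely different from the paper's: you derive the exact identity $\sum_i \frac{r_i(1-r_{i^*})}{1-r_ir_{i^*}} = 1+\frac{1-z}{2s_z}$ directly from the two relations $\sum_i p_i=1$ and $2s_z=z-\sum_j p_{j^*}r_j$ of Lemma \ref{le:sqz}, whereas the paper instead invokes the combinatorial Lemma \ref{le:simplelemma} together with the elementary fact that $\sum_x \cR^z_\bp(e,x)=1/(z-1)$ diverges exactly at $z=1$. Your computation is arguably sharper (it gives the value of the sum for every $z$, not just the location of the crossing) and I verified it is correct. For the second equivalence, your tree recursion is in substance a re-derivation of the $k=2$ case of Lemma \ref{le:simplelemma}, and your reduction to the statement ``$\cR^z_\bp\delta_e\in\ell^2(\cG)$ if and only if $z>\rho_\bp$'' is exactly the paper's route; the paper then simply asserts this $\ell^2$ criterion ``from spectral considerations,'' so you have correctly identified the crux.

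The gap is in your proof of that crux, specifically the divergence of $\|\cR^z_\bp\delta_e\|_2$ at $z=\rho_\bp$. The ``classical fact'' you lean on, $\limsup_t\|\cP_\bp^t\delta_e\|_2^{1/t}=\rho_\bp$, is true here (it follows from Haagerup's inequality, cf.\ Proposition \ref{prop:haag}), but it does \emph{not} imply the divergence: if one had $\|\cP_\bp^t\delta_e\|_2\asymp \rho_\bp^t\,t^{-2}$, the radius of convergence would still be $\rho_\bp$ while $\sum_t \rho_\bp^{-2(t+1)}\|\cP_\bp^t\delta_e\|_2^2<\infty$, and termwise positivity only bounds $\|\cR^{\rho_\bp}_\bp\delta_e\|_2^2$ \emph{below} by such sums. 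Your reversible-case fallback does work: Lemma \ref{le:sqz} exhibits $s_z$ as the larger root of a strictly convex equation whose minimum value is $\rho'_\bp=\rho_\bp$, so $\cR^z_\bp(e,e)=1/(2s_z)$ has a square-root singularity and $\|\cR^z_\bp\delta_e\|_2^2=-\partial_z\cR^z_\bp(e,e)\to\infty$. But in the non-reversible case one may have $\rho'_\bp<\rho_\bp$, the functions $\gamma_i^z$ are then analytic at $\rho_\bp$, and your two suggestions do not close the argument: $(z\cI-\cP_\bp^*)(z\cI-\cP_\bp)$ is not a nearest-neighbour operator of the form covered by Lemma \ref{le:recres}, and asserting that the Perron root of $B_{ij}=r_j^2\ind_{j\ne i^*}$ crosses $1$ ``precisely at $z=\rho_\bp$'' is circular --- that crossing point is exactly the quantity $z_\star$ you are trying to identify. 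A non-circular route: if $\|\cR^{\rho_\bp}_\bp\delta_e\|_2<\infty$, then by the product structure of the kernel the spherical sums $\sum_{|x|=R}\cR^{\rho_\bp}_\bp(e,x)^2$ decay geometrically, so Haagerup's inequality makes convolution by $\cR^{\rho_\bp}_\bp(e,\cdot)$ a bounded operator on $\ell^2(\cG)$ inverting $\rho_\bp\cI-\cP_\bp$; this contradicts the fact that the spectral radius of a positive operator belongs to its spectrum. Some argument of this kind is needed to finish your proof.
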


The result is a direct consequence of the following combinatorial statement (whose proof we include in the appendix for completeness).
\begin{lemma}\label{le:simplelemma}
 For any $(\alpha_i)_{i=1}^d$ in $[0,1)^d$, the function defined on $\cG$ by $F(x):=\prod_{t=1}^n \alpha_{i_t}$ if $x=g_{i_1}\dots g_{i_n}$ in reduced form, then $F$ is integrable for the uniform counting  measure on $\cG$ if and only if 
$$ \sum_{i=1} ^d \frac{\alpha_i(1-\alpha_{i^*})}{1-\alpha_i\alpha_{i^*}}<1.$$
 \end{lemma}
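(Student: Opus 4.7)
My plan is to exploit the tree-like structure of the Cayley graph of $\cG$ to derive a closed system of equations for partial sums of $F$, and then extract the threshold condition. For each $i\in[d]$, let
\[
T_i := \sum_{w} F(w),
\]
where the sum runs over non-empty reduced words $w=g_{i_1}\cdots g_{i_n}$ with $i_1 = i$. Since every such word is either the single letter $g_i$ or $g_i$ concatenated with a reduced word whose first letter $g_j$ satisfies $j\ne i^*$, one gets the fundamental identity
\[
T_i \;=\; \alpha_i\Bigl(1+\sum_{j\ne i^*} T_j\Bigr),
\]
valid in $[0,\infty]$. Setting $S:=\sum_i T_i$, the integrability of $F$ is equivalent to $S<\infty$, and the identity becomes $T_i + \alpha_i T_{i^*} = \alpha_i(1+S)$.

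Assuming first that all $T_i$ are finite, I will solve this linear system class-by-class under the involution $*$. For each pair $\{i,i^*\}$ with $i\ne i^*$, the $2\times 2$ system yields after elimination
\[
T_i \;=\; \frac{\alpha_i(1-\alpha_{i^*})}{1-\alpha_i\alpha_{i^*}}\,(1+S),
\]
and in the fixed-point case $i=i^*$ the scalar equation $T_i(1+\alpha_i)=\alpha_i(1+S)$ gives the same formula (since $\tfrac{\alpha_i}{1+\alpha_i}=\tfrac{\alpha_i(1-\alpha_i)}{1-\alpha_i^2}$). Summing over $i$ yields $S = A(1+S)$ with $A$ the quantity in the lemma statement; as $1+S\geq 1$ this forces $A<1$. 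So $S<\infty$ implies $A<1$.

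For the converse, I will argue via monotone truncation. Let $T_i^{(N)}$ be the sum of $F(w)$ over reduced words of length $\leq N$ starting with $g_i$; then $T_i^{(0)}=0$ and $T_i^{(N+1)}=\alpha_i\bigl(1+\sum_{j\ne i^*}T_j^{(N)}\bigr)$, and $T_i^{(N)}\uparrow T_i$. Assuming $A<1$, define
\[
\hat T_i \;:=\; \frac{\alpha_i(1-\alpha_{i^*})}{(1-A)(1-\alpha_i\alpha_{i^*})},
\]
which by construction satisfies the limiting fixed-point equation. A straightforward induction on $N$, using the monotonicity of the recursion in its arguments, then shows $T_i^{(N)}\leq \hat T_i$ for every $N$ and $i$; passing to the limit, $T_i\leq \hat T_i<\infty$, hence $S<\infty$.

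The argument is essentially algebraic and the only point requiring a small care is the uniform handling of the involution: the $2\times 2$ block for a pair $\{i,i^*\}$ with $i\ne i^*$ and the scalar block for a fixed point $i=i^*$ both produce the same closed form, which is what makes the sum $A$ appearing in the statement the correct spectral quantity. No further obstacle is anticipated.
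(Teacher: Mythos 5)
Your proof is correct, and it takes a genuinely different route from the one in the paper. You set up the branch generating functions $T_i$ (the sums of $F$ over reduced words starting with $g_i$), exploit the tree recursion $T_i=\alpha_i\bigl(1+\sum_{j\ne i^*}T_j\bigr)$, solve the resulting linear system block-by-block under the involution, and close the argument with a monotone-truncation/fixed-point comparison for the converse; I checked the algebra (the $2\times 2$ elimination giving $T_i=\tfrac{\alpha_i(1-\alpha_{i^*})}{1-\alpha_i\alpha_{i^*}}(1+S)$, the coincidence with the scalar case $i=i^*$, and the fact that $\hat T_i$ solves the fixed-point equation) and it all works; the only mild presentational caveat is that the rearranged identity $T_i+\alpha_iT_{i^*}=\alpha_i(1+S)$ should be asserted only once finiteness is assumed, which you effectively do. The paper instead proceeds by a chain of combinatorial re-encodings: it first treats all words over $[d]$ (geometric series, threshold $\sum_i\alpha_i<1$), then encodes arbitrary words by repetition-free words together with run-lengths (threshold $\sum_i\alpha_i/(1+\alpha_i)<1$), and finally encodes reduced words by repetition-free words over the conjugation classes of $*$, each letter replaced by a run $i^n$ or $(i^*)^n$, which produces the class weights $\gamma_j$ and hence the stated sum. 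Your approach buys a self-contained, purely algebraic argument that also identifies the exact value $S=A/(1-A)$ of the total mass in the convergent regime; the paper's approach buys a bijective explanation of why the particular rational function $\alpha_i(1-\alpha_{i^*})/(1-\alpha_i\alpha_{i^*})$ appears, as the normalized weight of a run over a conjugation class. Both are complete proofs of the equivalence.
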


\begin{proof}[Proof of Lemma \ref{le:qnormal}]
From \eqref{alteres}, 
$\sum_{x\in \cG} \cR^z_{\bp}(e,x)<\infty $ if and only if $z> 1$.
On the other hand, from spectral considerations, $\|\cR_{\bp}^z \delta_e\|_2$ is finite for $z > \rho_{\bp}$ and diverges as $z$ goes to $\rho_{\bp}$. Hence, recalling definition \eqref{alteres}, we have $\sum_{x\in \cG} \left(\cR^z_{\bp}(e,x)\right)^2<\infty$ if and only if $z>\rho_{\bp}$.
Lemma \ref{le:simplelemma} implies that 
$$
\sum_{i=1}^d \frac{r_i(1-r_{i^*})}{1-r_ir_{i^*}} < 1   \Leftrightarrow z >1 \quad \quad
\text{and} 
\quad \quad
\sum_{i=1}^d \frac{r^2_i(1-(r_{i^*})^2)}{1-(r_ir_{i^*})^{2}} < 1 \Leftrightarrow  z >\rho_\bp.  
$$
We can conclude using the fact that (cf.  \eqref{gammasym}) the
$\gamma^z_i$s are continuous functions of $z$.\end{proof}
Now we are ready to identify the value of $\bp'$ which is such that \eqref{propox} holds. We set, for $i \in [d]$, $r_i^z(\bp)  = p_i \gamma_i ^z(\bp)$ and we introduce the vectors $\ba$ and $\bb$ whose coordinates  are given for all $i \in [d]$ by 
$$a_i(\bp) := r^1_i(\bp) \quad \text{ and } \quad b_i(\bp) := r^{\rho_\bp}_i(\bp).$$
The formulas for the coordinates $a_i$ and $b_i$ of $\ba$ and $\bb$ are given in Lemma \ref{le:sqz} and Lemma \ref{le:qnormal} can be used to determine $\rho_{\bp}$  (this characterization of $\rho_{\bp}$ could also be deduced from  \cite[Corollary 3.1]{MR1844214}). Notably, by Lemma \ref{le:sqz} we have $a_i a_{i^*}, b_i b_{i^*} \in [0,1)$.  We can now reformulate and prove Proposition \ref{prop:ptop}.

\begin{proposition}\label{prop:ptoprime}
For any probability vector $\bp$ on $[d]$, there exists a unique probability vector $\bp'$ on $[d]$ with the same support as $\bp$  such that for all $i \in [d]$, 
we have 
$$a_i (\bp) = \left( b_i (\bp') \right) ^2.$$ 
It is given by the formula, for all $i \in [d]$,
$$
p'_i = \frac{\sqrt{ a_i (\bp)}}{1 - \sqrt{a_i(\bp)a_{i^*}(\bp)}}\left(\sum_{j\in [d]} \frac{\sqrt{ a_j (\bp)}}{1 -\sqrt{ a_j(\bp)a_{j^*}(\bp)}}\right)^{-1}. 
$$
\end{proposition}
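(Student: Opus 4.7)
The plan is to reformulate the condition $a_i(\bp) = b_i(\bp')^2$ in terms of the ``tree multipliers'' $c_i := \sqrt{a_i(\bp)}$ and then invert the formulas of Lemma \ref{le:sqz} to recover $\bp'$. Note that $r_i r_{i^*} < 1$ by Lemma \ref{le:sqz}, so $c_i c_{i^*} = \sqrt{a_i(\bp) a_{i^*}(\bp)} < 1$, which legitimizes the denominators appearing in the claimed formula.

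First I would establish \emph{necessity and uniqueness}: any probability vector $\bp'$ satisfying $b_i(\bp') = c_i$ for all $i \in [d]$ must be given by the formula in the statement. Indeed, the inversion identity of Lemma \ref{le:sqz} reads $p'_i = 2 s' b_i(\bp')/(1 - b_i(\bp') b_{i^*}(\bp'))$, where $s' = s_{\rho_{\bp'}}(\bp') = 1 / (2\cR^{\rho_{\bp'}}_{\bp'}(e,e))$. Substituting $b_i(\bp') = c_i$ gives $p'_i \propto c_i/(1 - c_i c_{i^*})$, and the constraint $\sum_i p'_i = 1$ fixes the proportionality constant uniquely, recovering the explicit expression. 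Conversely, define $\bp'$ by that formula and set $2 s' = \bigl(\sum_j c_j / (1 - c_j c_{j^*})\bigr)^{-1}$, so that $p'_i = 2 s' c_i / (1 - c_i c_{i^*})$. Then $p'_i > 0$ iff $c_i > 0$ iff $a_i(\bp) > 0$ iff $p_i > 0$ (the last equivalence from $a_i(\bp) = p_i \gamma_i^1(\bp)$ with $\gamma_i^1(\bp) \geq 1$), so $\bp'$ is a probability vector with the same support as $\bp$. By the bijective nature of Lemma \ref{le:sqz}, the $(c_i)_i$ are then the tree multipliers $r_i^{z^*}(\bp')$ at the unique $z^* \geq \rho'_{\bp'}$ for which $s_{z^*}(\bp') = s'$.

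The only genuine obstacle is to verify that $z^* = \rho_{\bp'}$, so that $c_i = b_i(\bp')$ as desired (rather than $c_i$ being the multipliers at some other value of $z$). By the second characterization of Lemma \ref{le:qnormal}, $z^* = \rho_{\bp'}$ is equivalent to
$$
\sum_{i=1}^d \frac{c_i^2(1 - c_{i^*}^2)}{1 - (c_i c_{i^*})^2} = 1.
$$
Substituting $c_i^2 = a_i(\bp)$ transforms this into
$$
\sum_{i=1}^d \frac{a_i(\bp)(1 - a_{i^*}(\bp))}{1 - a_i(\bp) a_{i^*}(\bp)} = 1,
$$
which, since $a_i(\bp) = r_i^1(\bp)$, is precisely the identity provided by the first characterization of Lemma \ref{le:qnormal} applied to $\bp$ itself at $z = 1$. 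Hence the consistency holds for free, and the proposition follows.
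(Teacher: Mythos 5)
Your overall route is the same as the paper's: invert the relation $p'_i = 2s\, r_i/(1-r_ir_{i^*})$ of Lemma \ref{le:sqz} to get uniqueness and the explicit formula, then use the normalization identity of Lemma \ref{le:qnormal} at $z=1$ for $\bp$ to identify the spectral edge for $\bp'$. The uniqueness half, the support statement, and the final computation $\sum_i a_i(1-a_{i^*})/(1-a_ia_{i^*})=1$ are all correct and coincide with the paper's argument.

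There is, however, a genuine gap in the existence half, at the sentence ``By the bijective nature of Lemma \ref{le:sqz}, the $(c_i)_i$ are then the tree multipliers $r_i^{z^*}(\bp')$ at the unique $z^*\geq \rho'_{\bp'}$ for which $s_{z^*}(\bp')=s'$.'' The map $z\mapsto s_z(\bp')$ on $[\rho'_{\bp'},\infty)$ is the \emph{larger} root of $f(x)=z$, where $f(x)=2x+\sum_j(\sqrt{x^2+p'_jp'_{j^*}}-x)$; its range is only $[x_{\min},\infty)$ with $x_{\min}$ the minimizer of $f$. So the existence of your $z^*$ is equivalent to $s'\geq x_{\min}$, which is not automatic: if $s'<x_{\min}$, then $s'$ is the \emph{smaller} root of $f(x)=f(s')$ and the numbers $c_i=(\sqrt{(s')^2+p'_ip'_{i^*}}-s')/p'_{i^*}$ are not resolvent multipliers of $\cP_{\bp'}$ at any $z$, so the subsequent appeal to Lemma \ref{le:qnormal} has no object to apply to. This is precisely the point the paper spends the second half of its proof on: it rules out $s'<x_{\min}$ by monotonicity of \eqref{qprims} in $s$, which would give $c_i> r_i^{\rho'_{\bp'}}(\bp')\geq b_i(\bp')$ on the support and hence $\sum_i c_i^2(1-c_{i^*}^2)/(1-(c_ic_{i^*})^2)>1$, contradicting the very identity you compute at the end. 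In short, you have the right identity but use it only to conclude $z^*=\rho_{\bp'}$ \emph{given} that $z^*$ exists; the same identity, combined with the monotonicity argument, is needed to show that $z^*$ exists at all. (A related minor point: Lemma \ref{le:qnormal} is stated only for $z\geq\rho_{\bp'}$, whereas your $z^*$ is a priori only $\geq\rho'_{\bp'}$, so the case $z^*\in[\rho'_{\bp'},\rho_{\bp'})$ also needs the monotonicity argument rather than the lemma as stated.)
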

\begin{proof}[Proof of Proposition \ref{prop:ptoprime}]
For ease of notation, we set $r'_i = \sqrt{a_i (\bp)}$. Assume that $\bp'$ is a probability vector such that $
b_i (\bp') = r'_i$ for all $i \in [d]$.  By Lemma \ref{le:sqz}, 
$(p'_i)_{i\in [d]}$ is the probability vector proportional to   $\left( r'_i / (1 - r'_i r'_{i^*}) \right)_{i\in [d]}$ and hence we have uniqueness. We now prove existence. We set $p'_i =  2sr'_i / (1 - r'_i r'_{i^*})$ where $s$
is the normalization constant such that $\bp'$ is a probability vector.  
Now setting  
\begin{equation}\label{tworootz}
  z =   2 s + \sum_{j = 1}^{d} \PAR{ \sqrt{ s^2 + p'_j p'_{j^* }}  -s},
\end{equation}
we only need to check that $s= 2/\cR^{z}_{\bp'}(e,e)$.
Indeed if this is the case, Lemma \ref{le:sqz} implies that $r'_i=r^z_i(\bp')$ and Lemma \ref{le:qnormal} implies that $z=\rho_{\bp'}$.
In view of \eqref{tworootz} and of the proof of Lemma \ref{le:sqz}, we only need to discard the possibility that
$s<x_{\min}$ where $x_{\min}$ is the minimizer of $f(x):= 2 x + \sum_{j = 1}^{d} \PAR{ \sqrt{ x^2 + p'_j p'_{j^* }}  -x}$.
Our definitions for $p'_i$ and $s$ imply that 
\begin{equation}\label{qprims}
r'_i = \frac{\sqrt{s^2 + p'_i p'_{i^*} } - s }{p'_{i^*}} \; \text{ if } p'_{i^*}>0
\quad \text{ and } \quad r'_i= \frac{p'_{i}}{2s} \; \text{ if  not}.
\end{equation}
Since both expressions above are monotone in $s$, if $s<x_{\min}$,
one would have 
$r'_i >  q^{\rho'_{\bp'}}_{i}(\bp') $ whenever $p'_{i}>0$ since $r^{\rho'_{\bp'}}_{i}(\bp)$ is obtained by substituting $s$ by $x_{\min}$ in  \eqref{qprims}  (here we use the definition \eqref{eq:AOnonrev} which implies that $\rho'_{\bp'}= f(x_{\min})$, as well as Lemma \ref{le:sqz}).
 As  $\rho'_{\bp'}\le \rho_{\bp'}$, this also implies that $r'_i >  r^{\rho_{\bp'}}_{i}(\bp')$ and thus that
\begin{equation}
 \sum_{i=1}^d \frac{(r'_i)^2(1-(r'_{i^*})^2)}{1-(r'_ir'_{i^*})^{2}}>1
\end{equation}
which yields a contradiction to the definition of $r'_i$. 
\end{proof}

\subsection{Deducing mixing time from a bounding kernel}
\label{sec:deduce2}

Our aim now is to work with the same stopping time and backbone walk as in Section \ref{skelex} and use the information we have to approximate  the transition matrix of the backbone walk $Q_{n}= P_{n,\bq}$  where the probability vector $\bq$ was defined below \eqref{eq:defQ}, with a power series of $P_{n,\bp'}$ the transition matridx of the nearest neighbor random walk associated with $\bp'$ of Proposition \ref{prop:ptoprime}. We further define $Q'_n$ to be the following truncated series (which approximates a multiple of the resolvent of $P_{n,\bp'}$ at $z = \rho_{\bp'}$)
$$Q'_n:= \frac{1}{\sqrt{k}}\sum_{t=0}^{\lfloor \log k\rfloor^4}  \left(\frac{P _{n,\bp'}}{\rho_{\bp'}} \right)^t.$$
(The fact that $\rho_{\bp'}$ is positive can be deduced from the expression \eqref{alteres} with a few simple computations. It also follows from the forthcoming Lemma \ref{le:entriesrho}).

\begin{proposition}\label{prop:dom}
Given $\bp$ a probability vector on $[d]$, there exists a real $C = C(\bp)$  such that for $\bp'$ given by Proposition \ref{prop:ptoprime}, we have, for all $x, y \in V_n$
\begin{equation*}
 Q_n (x,y)\le  C Q'_{n}(x,y)
\end{equation*}
\end{proposition}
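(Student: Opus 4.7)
The plan is to establish the pointwise inequality by reducing it to a single--vertex bound on the covering group $\cG$ and then invoking the quadratic identity of Proposition \ref{prop:ptoprime}. Since $q_g$ is supported on $\partial \cU$ and the covering identity yields $P^{t}_{n,\bp'}(x,\varphi_n(g,x)) \ge \cP^{t}_{\bp'}(e,g)$ for any $g \in \cG$, it suffices to prove that for every $g$,
\begin{equation*}
q_g \;\le\; \frac{C}{\sqrt{k}}\, \sum_{t=0}^{\lfloor \log k \rfloor^{4}} \rho_{\bp'}^{-t}\, \cP^{t}_{\bp'}(e,g),
\end{equation*}
and then to sum over those $g$ for which $\varphi_n(g,x)=y$.

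The bound on $q_g$ will be obtained by combining two facts. First, a first-passage decomposition gives $u(g) = F(g)\, u(e)$, where $F(g) = \bbP[\exists t:\,\cX_t = g]$, hence $q_g \le F(g) \le C_1 \cR^{1}_{\bp}(e,g)$. Second, Proposition \ref{prop:ptoprime} yields $\cR^{1}_{\bp}(e,g) = C_2 \bigl(\cR^{\rho_{\bp'}}_{\bp'}(e,g)\bigr)^{2}$. Since $g \in \partial\cU$ implies $u(g) \le 1/k$, this in turn gives $\cR^{\rho_{\bp'}}_{\bp'}(e,g) \le C_3 / \sqrt{k}$. Putting these together,
\begin{equation*}
q_g \;\le\; C_1 C_2 \bigl(\cR^{\rho_{\bp'}}_{\bp'}(e,g)\bigr)^{2} \;\le\; \frac{C_4}{\sqrt{k}}\, \cR^{\rho_{\bp'}}_{\bp'}(e,g).
\end{equation*}

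To conclude I would replace $\cR^{\rho_{\bp'}}_{\bp'}(e,g) = \rho_{\bp'}^{-1}\sum_{t \ge 0} \rho_{\bp'}^{-t}\cP^{t}_{\bp'}(e,g)$ by its truncation at $T = \lfloor \log k \rfloor^{4}$, at the cost of a multiplicative constant. Since $\Diam\,\cU \le C_5 \log k \ll T$ by Proposition \ref{le:UnifU}, the bulk of the series lives well within the truncation window. The main obstacle is the quantitative tail estimate
\begin{equation*}
\sum_{t > T} \rho_{\bp'}^{-t}\, \cP^{t}_{\bp'}(e,g) \;\le\; \tfrac{1}{2}\, \rho_{\bp'}\, \cR^{\rho_{\bp'}}_{\bp'}(e,g),
\end{equation*}
which requires sharp decay of $\cP^{t}_{\bp'}(e,g)$ at the spectral edge of $\cP_{\bp'}$, of the form $t^{-3/2}\rho_{\bp'}^{t}$ up to a $g$-dependent prefactor. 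This will be supplied by the forthcoming Lemma \ref{le:entriesrho}, together with the tree multiplicativity of the resolvent from Lemma \ref{le:recres} and a saddle-point analysis of the associated generating function near $z=\rho_{\bp'}$. Once this tail bound is available, the truncated sum $S_T$ satisfies $\cR^{\rho_{\bp'}}_{\bp'}(e,g) \le 2\rho_{\bp'}^{-1} S_T$, and combining with the previous display yields the sought pointwise inequality.
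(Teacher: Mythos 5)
Your reduction to the group level and your derivation of the intermediate bound $q_g \le \frac{C}{\sqrt k}\, \cR^{\rho_{\bp'}}_{\bp'}(e,g)$ for $g \in \partial U$ are correct, and amount to a minor variant of the paper's argument: the paper combines $q_g \le 1/k$ with the \emph{lower} bound $\cR^1_{\bp}(e,g) \ge c/k$ on $\partial U$ (obtained from the multiplicative structure of the resolvent, Lemma \ref{le:recres}), whereas you combine $q_g \le \cR^1_\bp(e,g)/\cR^1_\bp(e,e)$ with the \emph{upper} bound $\cR^1_{\bp}(e,g)\le 1/k$; both routes then invoke the quadratic identity of Proposition \ref{prop:ptoprime}. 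Either works.

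The gap is in the truncation step. Your tail estimate $\sum_{t>T}\rho_{\bp'}^{-t}\cP^t_{\bp'}(e,g) \le \frac{1}{2}\rho_{\bp'}\cR^{\rho_{\bp'}}_{\bp'}(e,g)$ sits exactly at the critical point of the power series \eqref{alteres}: in the reversible case $\rho_{\bp'}$ coincides with the convergence threshold $\rho'_{\bp'}$, so the summands $\rho_{\bp'}^{-t}\cP^t_{\bp'}(e,g)$ have no geometric decay, and Lemma \ref{le:entriesrho} --- which only gives $\cP^t_{\bp'}(e,g)\le\rho_{\bp'}^t$, i.e.\ summands bounded by $1$ --- supplies nothing here. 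The $t^{-3/2}\rho_{\bp'}^t$ decay you invoke is a genuine local limit theorem at the spectral edge; it holds for these walks but is nowhere proved in the paper, and to make your tail bound work you would further need the $g$-dependent prefactor to be comparable to $\cR^{\rho_{\bp'}}_{\bp'}(e,g)$ uniformly over $g\in\partial U$, which is precisely the hard quantitative content of such a theorem. The paper sidesteps all of this: it sets $z = \rho_{\bp'} + c(\log k)^{-2}$, uses the explicit formulas of Lemma \ref{le:sqz} to get $\gamma_i^{\rho_{\bp'}} - \gamma_i^z \le C\sqrt{z-\rho_{\bp'}}$ and hence, via the product formula of Lemma \ref{le:recres} and $\Diam(\cU)\le C\log k$, the comparison $\cR^{\rho_{\bp'}}_{\bp'}(e,g)\le 2\cR^z_{\bp'}(e,g)$; the series for $\cR^z_{\bp'}$ then has a genuinely geometric tail (ratio $(\rho_{\bp'}/z)^t = e^{-ct(\log k)^{-2}}$, using only $\cP_{\bp'}^t(e,g)\le \rho_{\bp'}^t$ from Lemma \ref{le:entriesrho}), which is negligible against the lower bound $\cR^z_{\bp'}(e,g)\ge c/\sqrt k$ once $t\ge \lfloor C'\log k\rfloor^3$. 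You should either adopt this off-the-edge comparison or actually prove the local limit theorem with uniform constants; as written, your truncation step is unjustified.
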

We postpone the proof of this proposition to Section \ref{sec:post} and deduce 
Theorem \ref{aniz} out of it. The proof includes a few technical lemmas whose proofs are postponed to the end of this section.

\begin{proof}[Proof of Theorem \ref{aniz}]
Our first step is 
 to use the comparison above to obtain spectral estimates for $Q_n$.
We cannot control directly the spectral gap but we can estimate the contraction of functions with large variance.
More precisely, given a matrix $A$ of size $n \times n$ and $1 \leq u \leq \sqrt n$, we define 
\begin{equation}\label{eq:kappau}
\kappa_u (A):= \sqrt{\max_{f  :  \|f\|_2 \ge \frac{u}{\sqrt n } \|f\|_1  }\frac{\langle A f, A f \rangle }{\langle f,  f \rangle} }.
\end{equation}
Note that $\kappa_1 (A)$ is the operator norm of $A$ and  $\kappa_{\sqrt n}(A)$ is the square root of the maximal diagonal entry of $A^*A$. For general $u$, the scalar $\kappa_u(A)$ can be thought of as a kind of pseudo-norm of $A$ restricted to vectors which are localized in terms of their $\ell^2$ over $\ell^1$ ratio. The function $u \mapsto \kappa_u (A)$ can be thought of as a spectral analog (for a matrix) of the isoperimetric profile of a graph (if $A$ is the adjacency matrix of a graph, the isoperimetric profile is essentially obtained by restricting the maximum in \eqref{eq:kappau} to functions $f$ which are indicator functions of a subset of vertices).

\begin{lemma}\label{comparaa}
 Let $A,B$ be two $n \times n$ matrices such that $B$ is a bistochastic matrix. Assume that for some real $c \geq 0$ and   all $x,y \in [n]$, we have $|A(x,y)|\le c B(x,y)$ then for all $1 \leq u \leq \sqrt n $, 
 \begin{equation}\label{kappam}
  \kappa_u(A) \le   c \sigma(B)+ \frac{c}{ u},
 \end{equation}
 where $\sigma(B) = \| B_{|\IND^\perp} \|_{2 \to 2}$ is the  singular radius  of $B$.
 \end{lemma}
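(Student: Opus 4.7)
The plan is to proceed in two steps. First, I would reduce the problem to a bound for $\kappa_u(B)$ itself (absorbing the constant $c$), then I would bound $\kappa_u(B)$ by decomposing $B$ into its rank-one average part and its restriction to $\ind^\perp$.

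For the first step, I would observe that entrywise domination $|A(x,y)| \le cB(x,y)$ passes to the $\ell^2$ norm via the triangle inequality: for any $f$,
\[
\|A f\|_2 \le \bigl\| |A| \, |f| \bigr\|_2 \le c \, \|B |f|\|_2,
\]
where $|f|$ denotes the entrywise absolute value. Since $\||f|\|_1 = \|f\|_1$ and $\||f|\|_2 = \|f\|_2$, the constraint $\|f\|_2 \ge (u/\sqrt n)\|f\|_1$ is invariant under $f \mapsto |f|$, so taking the supremum over the constrained set gives $\kappa_u(A) \le c\,\kappa_u(B)$.

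For the second step, I would exploit bistochasticity to write $B = \Pi + B_0$, where $\Pi = \ind\ind^T/n$ is the orthogonal projection onto constants and $B_0 = B - \Pi$. Since $B\ind = \ind$ and $\ind^T B = \ind^T$, the operator $B_0$ vanishes on $\ind$ from both sides, so it leaves $\ind^\perp$ invariant with operator norm exactly $\sigma(B)$. For any $f$, one then has $\|\Pi f\|_2 = |\langle \ind, f\rangle|/\sqrt n \le \|f\|_1/\sqrt n$ and $\|B_0 f\|_2 \le \sigma(B)\,\|f\|_2$, yielding
\[
\|B f\|_2 \le \sigma(B)\, \|f\|_2 + \frac{\|f\|_1}{\sqrt n}.
\]
Imposing the constraint $\|f\|_1/\sqrt n \le \|f\|_2/u$ converts this into $\|Bf\|_2 \le (\sigma(B) + 1/u)\|f\|_2$, so $\kappa_u(B) \le \sigma(B) + 1/u$. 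Combining with step one gives \eqref{kappam}.

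I do not expect any real obstacle here: the argument is soft and the only mild subtlety is verifying that the reduction to nonnegative entries in step one is compatible with the constraint defining $\kappa_u$, which follows from the invariance of $\ell^1$ and $\ell^2$ norms under taking absolute values.
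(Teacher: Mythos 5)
Your proof is correct and takes essentially the same approach as the paper: both reduce to the entrywise bound $\|Af\|_2 \le c\|B|f|\|_2$ and then split $B$ into its action on constants (controlled by $\|f\|_1/\sqrt n$) and its action on $\ind^\perp$ (controlled by $\sigma(B)\|f\|_2$). The only cosmetic difference is that the paper uses orthogonality/Pythagoras followed by $\sqrt{a^2+b^2}\le|a|+|b|$, while you apply the triangle inequality directly and route through the intermediate quantity $\kappa_u(B)$.
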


From Proposition \ref{prop:dom}, we may apply Lemma \ref{comparaa} when $A=Q_n$ and $B= \alpha Q'_n$, with $\alpha=k^{-1/2} \sum_{t=0}^{[\log k]^4} \rho_{\bp'}^{-t}$
and $c= C \alpha^{-1}$ for the constant $C$ given by Proposition \ref{prop:dom}.
In this case, from the triangle inequality, we have
\begin{equation}\label{eq:csigmaB}
 c \sigma(B)  = \NRM{\frac{C}{\sqrt{k}}\sum_{t=0}^{\lfloor \log k\rfloor^4}  \frac{(P^t _{n,\bp'})_{|\IND^\perp}}{\rho^t_{\bp'}} } \leq \frac{C}{\sqrt{k}} \sum_{t=0}^{\lfloor \log k\rfloor^4} \left( \frac{\sigma_{n,\bp'}(t)}{\rho_{\bp'}} \right)^{t},
\end{equation}
and, since $\rho_{\bp'} >0$ (see forthcoming Lemma \ref{le:entriesrho}), for some adequate choice of $C'$
\begin{equation*}
c = \frac{C}{\sqrt{k}} \sum_{t=0}^{\lfloor \log k\rfloor^4} \rho_{\bp'}^{-t}\le e^{C' (\log k)^4}.
\end{equation*}
We now bound \eqref{eq:csigmaB}. For that, we use the next proposition which quantifies the convergence of $\sigma_{\bp} (t)$ to $\rho_{\bp}$ in \eqref{eq:sympi}.
\begin{proposition}\label{prop:haag}
For any probability vector $\bp$ and integer $t \geq 1$, we have
$$
\rho_{\bp} \leq \sigma_{\bp} (t) \leq (t+1)^{2/t} \rho_{\bp}.
$$ 
\end{proposition}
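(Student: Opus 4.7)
The plan is to prove the two inequalities separately, using different ingredients for each.

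The lower bound $\rho_{\bp} \leq \sigma_{\bp}(t)$ is essentially formal. Submultiplicativity of the operator norm gives $\|\cP_\bp^{nt}\| \leq \|\cP_\bp^t\|^n$ for all integers $n\geq 1$, so $\sigma_\bp(nt) \leq \sigma_\bp(t)$. Since $\sigma_\bp(nt) \to \rho_\bp$ as $n\to \infty$ by Gelfand's formula \eqref{eq:gelfand}, the inequality $\rho_\bp \leq \sigma_\bp(t)$ follows.

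For the upper bound $\sigma_\bp(t) \leq (t+1)^{2/t}\rho_\bp$, equivalent to $\|\cP_\bp^t\| \leq (t+1)^2 \rho_\bp^t$, the strategy combines Haagerup's inequality with a resolvent singularity estimate. Since $\cG$ is a free product of copies of $\bbZ$ and $\bbZ/2\bbZ$, it satisfies Haagerup's rapid-decay inequality: for any $f \in \ell^2(\cG)$ supported on reduced words of length exactly $k$, one has $\|\lambda(f)\|_{2\to 2} \leq (k+1)\|f\|_2$. Decompose $\bp^{*t} = \sum_{k=0}^t f_{t,k}$ according to reduced length. Applying Haagerup term by term and then Cauchy--Schwarz with $\sum_k \|f_{t,k}\|_2^2 = \|\bp^{*t}\|_2^2$, we obtain
\begin{equation*}
\|\cP_\bp^t\| = \|\lambda(\bp^{*t})\| \leq \sum_{k=0}^t (k+1)\|f_{t,k}\|_2 \leq (t+1)^{3/2}\|\bp^{*t}\|_2.
\end{equation*}

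It remains to show $\|\bp^{*t}\|_2 \lesssim (t+1)^{1/2}\rho_\bp^t$. For this, use that the $\ell^2$-valued map $z\mapsto \cR^z_\bp \delta_e$ is holomorphic on $\{|z|>\rho_\bp\}$ and apply Cauchy's formula on a circle $|z|=r$ with $r>\rho_\bp$:
\begin{equation*}
\bp^{*t} = \cP_\bp^t \delta_e = \frac{1}{2\pi i}\oint_{|z|=r}z^t\,\cR^z_\bp \delta_e\,dz, \qquad \|\bp^{*t}\|_2 \leq r^{t+1}\max_{|z|=r}\|\cR^z_\bp \delta_e\|_2.
\end{equation*}
The key input is a sharp singularity estimate $\|\cR^z_\bp \delta_e\|_2 = O((|z|-\rho_\bp)^{-1/2})$ as $|z|\downarrow \rho_\bp$, which follows from the multiplicative formula of Lemma \ref{le:recres} together with Lemma \ref{le:qnormal}: the latter identifies $|z|=\rho_\bp$ as the precise threshold at which the series $\sum_g |\cR^z_\bp(e,g)|^2$ ceases to converge, and quantifying the rate at which the criterion is violated gives the square-root blow-up. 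Choosing $r = \rho_\bp(1+1/t)$ optimizes the Cauchy bound and produces $\|\bp^{*t}\|_2 \leq C\sqrt{t+1}\,\rho_\bp^t$. Combined with the Haagerup step this yields $\|\cP_\bp^t\| \leq C'(t+1)^2\rho_\bp^t$; careful bookkeeping of the constants (and a limit $r\downarrow \rho_\bp$ after estimation) absorbs $C'$ into the $(t+1)^2$ factor.

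The main technical obstacle is the resolvent estimate $\|\cR^z_\bp \delta_e\|_2^2 \lesssim (|z|-\rho_\bp)^{-1}$: it must be uniform in $\arg z$ on the circle $|z|=r$ and should have a constant tight enough (when combined with the Haagerup factor $(t+1)^{3/2}$) to produce exactly $(t+1)^2$ rather than a larger polynomial. Everything else is a routine optimization of the Cauchy contour.
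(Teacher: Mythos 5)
Your lower bound and your Haagerup step are fine and match the paper's first move: the RD inequality for $\cG^{q_1,q_2}_{\mathrm{free}}$ reduces everything to showing $\| \cP_{\bp}^t \delta_e \|_2 \leq \rho_{\bp}^t$. The gap is in how you establish that $\ell^2$ bound. The paper's Lemma \ref{le:entriesrho} proves the \emph{exact} inequality $\| \cP_\bp^t \delta_e \|_2 \leq \rho^t_{\bp}$ (no constant, no polynomial loss) by a short translation-invariance argument: writing $\|\cP_\bp \delta_e\|_2^{2k}$ as a $k$-fold sum, re-indexing by left translations, and dominating the sum of squares of products by the square of the sum defining $\cP_\bp^k(e,x)$, then letting $k \to \infty$ via Gelfand's formula. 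This works for any finitely generated group and costs three lines. Your contour-integral route replaces this with a resolvent edge estimate $\|\cR^z_\bp \delta_e\|_2 = O((|z|-\rho_\bp)^{-1/2})$ that you assert but do not prove: Lemma \ref{le:recres} and Lemma \ref{le:qnormal} are stated for \emph{real} $z \geq \rho'_\bp$, so you would first need the positivity of the power-series coefficients in \eqref{alteres} to reduce $|z|=r$ to $z=r$, and then a \emph{quantitative} version of Lemma \ref{le:qnormal} controlling how fast $\sum_i r_i^2(1-r_{i^*}^2)/(1-(r_ir_{i^*})^2)$ moves away from $1$ as $z$ increases past $\rho_\bp$ (this rate is $\sqrt{z-\rho_\bp}$ in the reversible case, where $\rho_\bp = \rho'_\bp$ and $s_z$ has a square-root singularity, and linear otherwise). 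None of this is routine, and it is precisely the "main technical obstacle" you defer.

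Even granting the resolvent estimate, the method cannot deliver the stated constant. The Cauchy bound with $r = \rho_\bp(1+1/t)$ produces $\|\cP_\bp^t\delta_e\|_2 \leq C(\bp)\sqrt{t+1}\,\rho_\bp^t$ with a constant coming from the edge asymptotics of the resolvent, which depends on $\bp$ and has no reason to be $\leq 1$; combined with the Haagerup factor this gives $\|\cP_\bp^t\| \leq C(\bp)(t+1)^2\rho_\bp^t$, not $(t+1)^2\rho_\bp^t$. The claim that "careful bookkeeping absorbs $C'$ into the $(t+1)^2$ factor" fails already at $t=1$: your route gives $\|\bp\|_2 \leq C(\bp)\rho_\bp$, whereas the proposition requires a bound with no such constant, and letting $r \downarrow \rho_\bp$ only makes the resolvent factor blow up. For the paper's application (bounding $\sum_t (\sigma_{\bp'}(t)/\rho_{\bp'})^t$ by a polylog in $k$) a $\bp$-dependent constant would be harmless, but as a proof of the proposition as stated the argument does not close. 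The fix is to replace the entire contour-integral step by the paper's direct proof that $\|\cP_\bp^t\delta_e\|_2 \leq \rho_\bp^t$.
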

 From Proposition \ref{prop:haag}, we deduce that 
 $$
\sum_{t=0}^{\lfloor \log k\rfloor^4} \left( \frac{\sigma_{\bp'}(t)}{\rho_{\bp'}} \right)^{t} \leq ( (\log k )^4 +1)^3,
 $$
Using Assumption \eqref{raminudge} and Lemma \ref{comparaa}, for any fixed $k\geq 5$, for all $n\ge n_0(k)$ sufficiently large, we obtain 
\begin{equation}\label{kappasmall}
\kappa_u(Q_n)\le  \frac{(\log k)^{13}}{\sqrt{k}}.
\end{equation}

Now we want to use this estimate to build an adapted time for the original walk $P_{n,\bp}$.
The idea is first to iterate $Q_n$ several times in order to contract the $\ell^2$ norm below the threshold $u$ and then use the original transition matrix $P_{n,\bp}$ to finish the job.
For this purpose, for a  large integer $k$ (we assume $k > (\log k)^{26}$)  which will be conveniently fixed later on, and for $n \geq 3$, we set
$$a_{n}:= \left\lfloor \frac{\log n}{\log k - 26 \log \log k}  \right \rfloor\quad  \text{ and } \quad   b_{n}:= \lfloor \log \log n \rfloor.$$
We define $T:=b_n+\tau_{a_{n}}$ where $(\tau_s)_{s \geq 0}$ are as in Subsection \ref{skelex} the successive times of the backbone walk.
Our spectral estimates \eqref{kappasmall} implies that $X_T$ is close to equilibrium:

\begin{lemma}\label{contrax}
For any fixed integer $k \geq 3$, let $a_n, b_n$ be as above and  $T=b_n+\tau_{a_{n}}$. If Assumption \eqref{raminudge} holds, then we have 
\begin{equation*}
 \lim_{n\to \infty} \max_{x\in V_n}\| \bbP_x[ X_{T} \in \cdot]- \pi_n \|_{\TV}=0.
\end{equation*}

\end{lemma}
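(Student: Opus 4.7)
The approach is to iterate the spectral contraction estimate \eqref{kappasmall} on the backbone kernel $Q_n$ until the $\ell^2$ distance of $\mu_{a_n}:=\delta_x Q_n^{a_n}$ to equilibrium is of order $1/\sqrt n$, and then use the $b_n$ auxiliary steps of $P_{n,\bp}$ to convert this $\ell^2$ control into a vanishing total variation bound.

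Fix $u=u_0(k)\asymp e^{C'(\log k)^4}$, a constant depending only on $k$, large enough for the bound $\kappa_u(Q_n)\le (\log k)^{13}/\sqrt k$ from \eqref{kappasmall} to hold when $n$ is large (the error term $c/u$ from Lemma~\ref{comparaa} is then negligible). Set $g_j:=\mu_j-\pi_n$. Bistochasticity of $Q_n$ gives $\|g_j\|_1\le 2$ throughout, and initially $\|g_0\|_2\le 1$. Iterate the $\kappa_u$-estimate: as long as we are in ``Case A'' ($\|g_j\|_2\ge (u/\sqrt n)\|g_j\|_1$), each step contracts by $\kappa_u$; if we enter ``Case B'' ($\|g_{j}\|_2<2u/\sqrt n$), this bound is preserved by further applications of $Q_n$ (itself $\ell^2$-non-expansive). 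Since $a_n\asymp\log n/(\log k-26\log\log k)$ is calibrated precisely so that $((\log k)^{13}/\sqrt k)^{a_n}\lesssim 1/\sqrt n$, in either case $\|g_{a_n}\|_2\le C(k)/\sqrt n$. The same bound survives the subsequent $b_n$ applications of $P_{n,\bp}$, which is also bistochastic, giving $\|\bbP_x[X_T\in\cdot]-\pi_n\|_2\le C(k)/\sqrt n$.

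The final step is the $\ell^2\to\mathrm{TV}$ conversion. Naive Cauchy--Schwarz only delivers $\|\bbP_x[X_T\in\cdot]-\pi_n\|_1\le C(k)$. In the ``stays in Case A'' scenario one can improve by letting $u$ grow slowly with $n$, e.g.\ $u=u_0(k)\log\log n$ (still compatible with \eqref{kappasmall}), so that the Case A condition rearranges to $\|g_{a_n}\|_1\le \sqrt n\|g_{a_n}\|_2/u\lesssim 1/\log\log n\to 0$. If Case B is entered before step $a_n$, this direct route fails and the $b_n=\lfloor\log\log n\rfloor$ additional steps of $P_{n,\bp}$ must provide the missing smoothing.

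\textbf{Main obstacle.} The delicate point is precisely the Case B scenario: a distribution whose $\ell^2$ mass is of order $1/\sqrt n$ but whose $\ell^1$ mass is of order $1$ must be smoothed to vanishing TV. Since the hypothesis \eqref{raminudge} controls only the spectrum of $P_{n,\bp'}$, not of $P_{n,\bp}$ itself, there is no direct spectral gap on $P_{n,\bp}^{b_n}$ at our disposal. The requisite improvement should be extracted either from an entry-wise bound on $P_{n,\bp}^{b_n}$ (in the spirit of Lemma~\ref{le:entriesrho}) or by pulling the spectral control back through Proposition~\ref{prop:dom}, which relates $Q_n$ to iterates of $P_{n,\bp'}$. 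Carrying this transfer out in a way that exploits the full power of $b_n\to\infty$ is the main technical hurdle.
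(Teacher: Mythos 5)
Your first half is exactly the paper's argument: iterating the $\kappa_u$ estimate with the dichotomy "contract by $\kappa_u(Q_n)$ or already below $2u/\sqrt n$" (using $\|Q_n^t(x,\cdot)-\pi_n\|_1\le 2$), together with the calibration of $a_n$, yields $\|Q_n^{a_n}(x,\cdot)-\pi_n\|_2\le 2u/\sqrt n$ with $u=e^{(\log k)^5}$ fixed. However, the final step is a genuine gap: you correctly isolate the obstacle (an $\ell^2$ bound of order $u/\sqrt n$ with $\ell^1$ mass of order one does not give vanishing TV, and \eqref{raminudge} speaks about $P_{n,\bp'}$, not $P_{n,\bp}$), but you do not close it, and the two escape routes you sketch are not the right ones. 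Letting $u$ grow with $n$ only helps in your "Case A" and, as you note, fails otherwise; an entry-wise bound on $P_{n,\bp}^{b_n}$ in the spirit of Lemma \ref{le:entriesrho} cannot beat $\rho_{\bp}^{b_n}=n^{o(1)}$, which is far too weak to absorb the factor $\sqrt n$ in the Cauchy--Schwarz step; and Proposition \ref{prop:dom} concerns $Q_n$, which has already been exhausted after $a_n$ iterations.

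The paper's resolution is different and simpler. Since $g:=Q_n^{a_n}(x,\cdot)-\pi_n$ has zero sum, the Poincar\'e inequality \eqref{eq:poincare} gives $\|P_{n,\bp}^{b_n}g\|_2\le \sigma_{n,\bp}(b_n)^{b_n}\|g\|_2$, and then
\begin{equation*}
\|P_{n,\bp}^{b_n}Q_n^{a_n}(x,\cdot)-\pi_n\|_{\TV}\le \tfrac{\sqrt n}{2}\,\sigma_{n,\bp}(b_n)^{b_n}\,\frac{2u}{\sqrt n}= u\,\sigma_{n,\bp}(b_n)^{b_n}.
\end{equation*}
What makes this work is the fact, established just before the lemma is invoked in the proof of Theorem \ref{aniz} (see \eqref{eq:koko2}), that $\limsup_{n\to\infty,\,s\to\infty}\sigma_{n,\bp}(s)<1$. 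This is \emph{not} obtained by an $\ell^\infty$ bound but by a Dirichlet-form comparison (\cite[Lemma 13.22]{MR3726904}) between $\cP^s_{\bp}(\cP^*_{\bp})^s$ and $\cP^s_{\bp'}(\cP^*_{\bp'})^s$, legitimate because $\bp$ and $\bp'$ have the same support: it yields $(1-\sigma_{n,\bp}(s)^{2s})/(1-\sigma_{n,\bp'}(s)^{2s})\ge\min_i(p_i/p'_i)^{2s}$, hence a uniform gap for $P_{n,\bp}$ from the one for $P_{n,\bp'}$ guaranteed by \eqref{raminudge}. Since $u$ is a constant depending only on $k$ and $b_n\to\infty$, the right-hand side above tends to $0$. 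This transfer of the spectral gap from $\bp'$ to $\bp$ via the support condition is the missing idea in your proposal.
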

To show that $$\max_{x\in V_n}\Tm_{n,\bp}(x,\gep)\le (1+ \delta) (\log n) / \fh(\bp)$$ for $n$ sufficiently large, we use Proposition \ref{stoop} with $t = t_n$ and $s = s_n$ where
$$T=b_n+\tau_{a_n}, \quad 
 t_n := \lfloor (1+ \delta/2) (\log n) / \fh(\bp) \rfloor \quad \text{ and } \quad  s_n:=\lfloor (\delta/2) (\log n) / \fh(\bp) \rfloor.$$ 
With this setup, the first term   in \eqref{en3}
tends to zero according to Lemma \ref{contrax}.  For  the third one we need to show that $\sigma_{n,\bp}(s_n)$ is bounded away from one.
Since \eqref{consequence} holds for $\bp'$ we have  (cf.  \eqref{lalimsup})
\begin{equation} \label{eq:koko}
\limsuptwo{n\to \infty}{s\to \infty} \sigma_{n,\bp'} (s) =\rho_{\bp'}<1.
\end{equation}
Now since  $\bp$ and $ \bp'$ have the same support one can compare $\sigma_{n,\bp'} (s)$ and $\sigma_{n,\bp'}(s)$. More precisely applying \cite[Lemma 13.22]{MR3726904} to the operators $\cP^s_{\bp}(\cP^{*}_{\bp})^s$ and $\cP^s_{\bp'}(\cP^{*}_{\bp'})^s$
yields for every $s$ and $n$
\begin{equation}
 \frac{1-\sigma_{n,\bp} (s)^{2s}}{1-\sigma_{n,\bp'} (s)^{2s}}\ge \min_{i\in [d]} \left(\frac{p_i}{p'_i}\right)^{2s}.
\end{equation}
Hence $\limsup_{n\ge 1}\sigma_{n,\bp} (s_0)<1$ for some $s_0$ and thus from \eqref{eq:sms} we get èthat
\begin{equation} \label{eq:koko2}
\limsuptwo{n\to \infty}{s\to \infty} \sigma_{n,\bp} (s) <1.
\end{equation}

It remains to show that 
$$\lim_{n\to \infty} \bbP[ \tau_{a_n}> t-b_n ]=0.$$
From the law of large number and Lemma \ref{le:tau}, for any $\delta>0$, we may choose 
an integer $k$ sufficiently large such that
\begin{equation*}
\lim_{n\to \infty} \bbP\left[ T \le \left( 1+ \frac\delta 4\right) a_{n}\frac{\log k}{\fh(\bp)}  \right] =1.
\end{equation*}

This concludes the proof of Theorem \ref{aniz}.
\end{proof}

 \begin{proof}[Proof of Lemma \ref{comparaa}]The statement is an immediate consequence the following functional inequality valid for every $f$
\begin{equation}\label{basic}
  \sqrt{\langle Af, A f \rangle }\le c \sigma(B)\|f\|_{2}+\frac{c}{\sqrt{n}}\|f\|_{1}.
\end{equation}
Since $B$ is bistochastic, the constant functions are left invariant by $B$ and its transpose. It follows that $\sigma(B)$ is the operator norm of $B$ projected on functions with zero sum. Now given $f$, if $|f|$ is the vector $|f|(x) := |f(x)|$ and $|A|$ is the matrix $|A|(x,y) := |A(x,y)|$, we have
  \begin{equation*}
   \langle Af, A f \rangle  \le \langle |A| |f|, |A| |f| \rangle \le c^2  \langle B |f|, B |f| \rangle.
  \end{equation*}
The orthogonal projection of $|f|$ on zero sums functions is  $\underline f (x) := |f|(x) - \| f \|_1 / n$. We have 
\begin{equation}\label{eq:Bff}
  \langle B |f|, B |f| \rangle = \|f\|^2_1 / n  +  \langle B \underline f, B \underline f \rangle
  \le  \|f\|^2_1 / n  + \sigma(B)^2 \| f \|^2_2.
\end{equation}
We deduce \eqref{basic} using the triangle inequality, $\sqrt{ a^2+b^2}\le |a|+|b|$.
\end{proof}

\begin{proof}[Proof of Lemma \ref{contrax}]
Recall $T = b_n + \tau_{a_n}$. The distribution of $X_{T}$ can be written as
$$\bbP_x[ X_T\in \cdot]=(P_{n,\bp}^{b_{n}}Q_n^{a_{n}})(x,\cdot).$$ 
We first show that for any $x \in V_n$ (recall $u=u_k:=e^{(\log k)^5}$)
 \begin{equation}\label{swoof}
 \|Q^{a_{n}}_n(x,\cdot)-\pi_n \|_2\le \frac{2u}{\sqrt n}.
 \end{equation}
Since $Q_n$ is a contraction, we note that  $\|Q_n^{t}(x,\cdot)-\pi_n  \|_2$ is non-decreasing in $t$. Moreover, 
$$
\|Q_n^{t+1}(x,\cdot)-\pi_n \|_2  =  \| Q_n ( Q_n^{t}(x,\cdot)-\pi_n  )  \|_2 \leq  \max\left( \kappa_u ( Q) \| Q_n^{t}(x,\cdot)-\pi_n \|_2 , \frac{2 u} {\sqrt n } \right), 
$$ where we have used that $ \| Q_n^{t}(x,\cdot)-\pi_n  \|_1 \leq 2$. Hence, an immediate induction yields for all $t \geq 0$,
\begin{equation*}
 \|Q_n^{t}(x,\cdot)-\pi_n \|_2\le \max\left( \kappa_u ( Q)^t , \frac{2 u} {\sqrt n } \right).
\end{equation*}
Thus, our bound \eqref{kappasmall} and our choice for $a_{n}$ imply \eqref{swoof}.
To conclude the proof, we use the usual $\ell^2$ bound and combine it with \eqref{swoof}. This gives 
\begin{equation}
    \|  P_{n,\bp}^{b_{n}} Q_n^{a_{n}}(x,\cdot)- \pi_n \|_{\TV}   \leq \frac {\sqrt n}{2}  \|  P_{n,\bp}^{b_{n}} Q_n^{a_{n}}(x,\cdot)- \pi_n\|_2\le \frac {\sqrt n}{2}  \sigma_{\bp,n}(b_n)^{b_{n}}   \| Q^{a_n}(x,\cdot)- \pi_n \|_2 \le  \sigma_{\bp,n}(b_n)^{b_{n}}   u .
\end{equation}
Finally we conclude by using \eqref{eq:koko} and that $b_n$ tends to infinity.
\end{proof}

\begin{remark}[Relaxation of our assumption concerning  the spectral radius] \label{rq:flat1}
As in Remark \ref{rq:flat0}, we denote by $\dim_0(H)$  the flat-dimension of a vector space $H$ of $\dR^{V_n}$ and we set $\rho_{n,\bp}^{H}$ to be the operator norm of $P_{n,\bp}$ on the orthogonal of $H$. We may modify  Theorem \ref{aniz} as follows: if $(H_n)$ is a sequence of invariant vector spaces of $P_{n,\bp'}$ such that $\lim_n \rho_{n,\bp'} ^{H_n} = \rho_{\bp'}$ and $\dim_0(H_n) \leq n^{o(1)}$ (that is $\lim_n \log \dim_0(H_n)/\log n=0$) then the conclusion  of Theorem \ref{aniz} holds.

Indeed, in Lemma \ref{comparaa}, if $H$ is an invariant subspace of the bistochastic matrix $B$ and its transpose, then \eqref{kappam} can be improved to
$
\kappa_u (A) \leq c \rho_H(B) + c \sqrt{\dim_0(H)} / u,
$
 where  $\rho_H(B)$ is the operator norm of $B$ on the orthogonal of $H$. Recall that if $P_H$ is the orthogonal projection onto $H$, then $\|P_H g \|_2 \leq \| g\|_1 \sqrt{\dim_0(H) / n}$. Setting $\underline g = |f| - P_H|f|$, we may thus replace the bound \eqref{eq:Bff} by  $\langle B |f| , B |f| \rangle  \leq \| f \|^2_1 \dim_0(H)  / n + \langle B \underline g , B \underline g \rangle   \leq \| f \|^2_1 \dim_0(H)  / n +  \rho_H(B) \|f\|^2_2$. It gives the claimed improvement of \eqref{kappam}. The rest of the argument is essentially unchanged (the sequence $b_n$ has to be chosen so that $\veps_n \log n \ll b_n \ll \log n$). \end{remark}

\begin{remark}[More quantitative bound on the mixing time]
A more quantitative upper bound on $\Tm_n(\gep)$ can be obtained by choosing $k_n$ tending to infinity, and using a more quantitative version of Proposition \ref{lb} for anisotropic walks on trees. In the reversible case \eqref{eq:sympi}, optimizing all choices of parameters in our proof, we obtain a result of the form
$$\Tm_{n,\bp}(\gep)\le \frac{\log n}{\fh(\bp)}+ C(\log n)^{2/3}$$
provided that $\rho_{n,\bp'}$ converges fast enough to $\rho_{\bp'}$ as $n$ go to infinity (more specifically we require $\rho_{n,\bp'}\le \rho_{\bp'}+ C (\log n)^{-1/3} $).
Note that our correction term is larger than $(\log n)^{1/2}$, 
and thus the proof developed in this section does not allow to obtain the anisotropic counter-part of Equations \eqref{lupper}, \eqref{profyle}, which allow to describe more accurately the profile of relaxation to equilibrium provided some quantitative information about the convergence \eqref{raminudge} is given.
\end{remark}

\subsection{Proof of Proposition \ref{prop:haag}}

We start with a general lemma on the spectral radius of the operator $\cP_{\bp}$ and the probability of transitions.
\begin{lemma}\label{le:entriesrho}
Let $\cG$ be a finitely generated group. For any probability vector $\bp \in \ell^2(\cG)$, any integer $t \geq 1$, any $x \in \cG$, we have 
$$
\| \cP_\bp^t \delta_x \|_2  \leq \rho^t_{\bp}.
$$
\end{lemma}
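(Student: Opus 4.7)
The plan is to reduce the lemma to a super-multiplicativity of the $\ell^{2}$-norm under convolution of probability vectors and then combine it with Gelfand's formula for the spectral radius.

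First, I would observe that $\cP_{\bp}$ commutes with the right regular representation of $\cG$, which acts on $\ell^{2}(\cG)$ by isometries. Since $\delta_{x}$ is the image of $\delta_{e}$ under such an isometry, this immediately yields $\|\cP_{\bp}^{t}\delta_{x}\|_{2} = \|\cP_{\bp}^{t}\delta_{e}\|_{2}$ for every $x\in\cG$, reducing the problem to the case $x=e$. Identifying $\cP_{\bp}^{t}\delta_{e}$ with the $t$-fold convolution $\bp^{*t}\in\ell^{2}(\cG)$, the claim becomes $\|\bp^{*t}\|_{2} \leq \rho_{\bp}^{t}$.

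The heart of the argument is the following super-multiplicativity: for any two probability vectors $\bq_{1},\bq_{2}$ on $\cG$, one has $\|\bq_{1} * \bq_{2}\|_{2} \geq \|\bq_{1}\|_{2}\|\bq_{2}\|_{2}$. To obtain it, I would expand $\|\bq_{1}*\bq_{2}\|_{2}^{2} = \sum_{g}(\bq_{1}*\bq_{2})(g)^{2}$ and, after the change of variables $(h_{1},h_{2}) \mapsto (m,h_{2})$ with $m := h_{2}^{-1}h_{1}$, rearrange the sum as
\[
\|\bq_{1} * \bq_{2}\|_{2}^{2} = \sum_{m\in\cG}(\tilde \bq_{1} * \bq_{1})(m)\,(\bq_{2} * \tilde \bq_{2})(m),
\]
where $\tilde f(g):=f(g^{-1})$. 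Since $\bq_{1},\bq_{2}\geq 0$, both $\tilde \bq_{1}*\bq_{1}$ and $\bq_{2}*\tilde \bq_{2}$ are non-negative functions on $\cG$, so the sum is bounded below by its $m=e$ term; a direct check shows this term equals $\|\bq_{1}\|_{2}^{2}\|\bq_{2}\|_{2}^{2}$.

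Applying the inequality with $\bq_{1}=\bp^{*s}$ and $\bq_{2}=\bp^{*t}$ shows that the sequence $a_{n}:=\log\|\bp^{*n}\|_{2}$ is super-additive, so by Fekete's lemma $a_{n}/n \leq \lim_{n\to\infty} a_{n}/n$ for every $n\geq 1$. To identify the limit, note that $\|\bp^{*n}\|_{2} = \|\cP_{\bp}^{n}\delta_{e}\|_{2} \leq \|\cP_{\bp}^{n}\|_{2\to 2}$, and Gelfand's formula gives $\|\cP_{\bp}^{n}\|_{2\to 2}^{1/n} \to \rho_{\bp}$; hence $\lim_{n}a_{n}/n \leq \log\rho_{\bp}$, which yields $\|\bp^{*n}\|_{2}\leq\rho_{\bp}^{n}$ for every $n$. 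The only subtle step is the combinatorial rearrangement producing the super-multiplicativity identity; once it is in place, the rest is routine.
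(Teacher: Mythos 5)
Your proof is correct, and at its core it exploits the same mechanism as the paper: the positivity of the probability vector turns an $\ell^2$-norm of a convolution into a sum of non-negative terms that can be bounded below by its ``diagonal'' contribution, giving a super-multiplicativity of $\|\bp^{*n}\|_2$, after which Gelfand's formula finishes the job. The packaging, however, is genuinely different. The paper first reduces to $t=1$ (by applying the $t=1$ statement to the probability vector $\bp^{*t}$, whose operator has spectral radius $\rho_\bp^t$) and then proves the special inequality $\|\bp\|_2^{2k}\leq \|\bp^{*k}\|_2^2$ by expanding the $k$-fold product $\big(\sum_x \cP_\bp(e,x)^2\big)^k$ and applying $\big(\sum_i a_i\big)^2 \geq \sum_i a_i^2$ termwise; letting $k\to\infty$ and invoking Gelfand then gives $\|\bp\|_2\leq\rho_\bp$. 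You instead isolate the cleaner two-factor inequality $\|\bq_1 * \bq_2\|_2 \geq \|\bq_1\|_2\|\bq_2\|_2$ (the identity $\|\bq_1*\bq_2\|_2^2 = \sum_m(\tilde\bq_1*\bq_1)(m)(\bq_2*\tilde\bq_2)(m)$ is correct, and so is the bound by the $m=e$ term), deduce super-additivity of $\log\|\bp^{*n}\|_2$, and conclude via Fekete and Gelfand directly for all $t$ at once. Your version has the advantage of stating the super-multiplicativity lemma in a form that is reusable and of avoiding the slightly awkward detour of re-applying the $t=1$ case to $\bp^{*t}$; the paper's version avoids Fekete by extracting exactly the inequality it needs. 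The reduction $\|\cP_\bp^t\delta_x\|_2=\|\cP_\bp^t\delta_e\|_2$ via commutation with the right regular representation, which the paper states without justification, is correctly justified in your argument.
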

\begin{proof}
We may assume $x = e$ without loss of generality.
Since $\cP_\bp^t$ is the generator of a random walk with spectral radius $\rho^t_{\bp}$ we may also assume that $t=1$.
We have that $\| \cP_\bp \delta_e \|_2 \leq \sigma_{\bp}(1) = \| \cP_{\bp} \|_{2 \to 2}$. If the reversibility condition \eqref{eq:sympi}  holds, then $\sigma_{\bp}(1) = \rho_{\bp}$ and the lemma follows. In the general case, we use the group structure to obtain the required bound. We first write that for any integer $k \geq 1$,
$$
\| \cP_\bp \delta_e \|^{2k}_2 = \left(\sum_{x \in \cG} \cP_\bp (e,x)^2  \right)^k = \sum_{x_1,\ldots, x_k} \left( \cP_\bp (e,x_1)\cdots \cP_\bp (e,x_k) \right)^2.
$$
Using that $\cP_\bp (xg,yg) = \cP_\bp(x,y) $ for all $x,y,g$ in $\cG$, we may write
$$
\cP_\bp (e,x_1)\cdots \cP_\bp (e,x_k)  = \cP_\bp  (e,x_1) \cP_\bp (x_1,x_2x_1) \cdots \cP_\bp (x_{k-1} \cdots x_{1},x_k\cdots x_{1}),
$$
and
\begin{align*}
\sum_{x \in \cG} \left(\cP^k_{\bp} (e,x)  \right)^2 & =  \sum_{x} \left(\sum_{x_1,\ldots,x_{k-1}}\cP_\bp  (e,x_1) \cP_\bp (x_1,x_2x_1) \cdots \cP_\bp (x_{k-1} \cdots x_{1},x) \right)^2 \\
& \geq  \sum_{x}  \sum_{x_1,\ldots,x_{k-1}} \left(\cP_\bp  (e,x_1) \cP_\bp (x_1,x_2x_1) \cdots \cP_\bp (x_{k-1} \cdots x_{1},x) \right)^2 \\
& =  \sum_{x_1,\ldots,x_{k}} \left(\cP_\bp  (e,x_1) \cP_\bp (x_1,x_2x_1) \cdots \cP_\bp (x_{k-1} \cdots x_{1},x_k \cdots x_1) \right)^2.
\end{align*}
We deduce that 
$$
\| \cP_\bp \delta_e \|^{2k}_2 \leq \| \cP^k_\bp \delta_e \|^{2}_2  \leq \sigma_{\bp} (k)^{2}.
$$
We now let $k$ tend to infinity and apply \eqref{eq:gelfand}.
\end{proof}
Proposition \ref{prop:haag} is now an immediate consequence of the RD property \eqref{eq:RD} for the free group.
\begin{proof}[Proof of Proposition \ref{prop:haag}]
Haagerup inequality (that is, RD property for free groups) implies that for any $t \geq 0$,
$$
\sigma_{\bp} (t)^t = \| \cP_{\bp} ^t \|_{2 \to 2} \leq (t+1)^2 \| \cP_{\bp}^t \delta_e\|_2,
$$
see \cite[Lemma 1.4]{MR520930} (the proof is written in the case of the free group, denoted by $\cG_{\mathrm{free}}^{d,0}$ with our notation, but applies also to $\cG_{\mathrm{free}}^{q_1,q_2}$ with $q_1 + 2 q_2 = d$). It remains to use Lemma \ref{le:entriesrho}.\end{proof}

\subsection{Proof of Proposition \ref{prop:dom}}\label{sec:post}
 The matrices $Q_n$ and $P_{n,\bp'}$ are both defined as the transition kernel corresponding to projections of Markov chains on the group $\cG$ on $V_n$. From \eqref{eq:defPnp}-\eqref{eq:defPp}, if $\bq$ is a finitely supported probability vector on $\cG$, for all $x,y$ in $V_n$,
 $$
P_{n,\bq} (x,y) = \sum_{g\in \cG} \cP_{\bq} (e,g) \IND (  \varphi_n (g,x) = y),   
 $$
where $\varphi_n$ is the action of $\cG$ on $V_n$. It is thus sufficient to prove the inequality for the corresponding kernels  $\cQ$ (as in \eqref{eq:defQ}) and $\cP_{\bp'}$ on $\cG$, that is 
\begin{equation}\label{domination}
\forall x\in \cG, \quad  \cQ (e,x)\le  \frac{C}{\sqrt{k}}\sum_{t=0}^{\lfloor \log k\rfloor^4}  \left(\frac{\cP_{\bp'}}{\rho_{\bp'}} \right)^t (e,x).
\end{equation}
Since $\cQ (e,x) = 0$ for all $x \notin \partial U$, it is sufficient to check \eqref{domination} for $x \in \partial U$.  By Lemma \ref{le:recres},  if $z \geq \rho_{\bp}$ and $x = g_i y$ for some $g_i \in \cA$, then $\cR^z(e,x) \geq c \cR^z(e,y)$ for some positive $c = c(\bp,z)$. Since  $\cR^1(e,y) \geq 1/k$ for all $y \in U$, we find for all $x \in \partial U$,
$$
\cQ (e,x) \leq \frac 1  k \leq \frac{C}{\sqrt k} \sqrt{\cR_{\bp}^1 (e,x) },
$$
with $C = 1/ \sqrt c$. Thus, from Proposition \ref{prop:ptop}, for some new constant $C = C (\bp)$, for all $x \in \partial U$,
$$
\cQ (e,x) \leq  \frac{C}{\sqrt k}  \cR_{\bp'}^{\rho_{\bp'} } (e,x) .  
$$
To deduce \eqref{domination} from this last bound, we expand the resolvent as a power series. It requires some care because,  when the reversibility condition \eqref{eq:sympi} holds, $z=\rho_{\bp'}$ is precisely the  threshold $\rho'_{\bp}$ for the convergence the power series \eqref{alteres}.

With the notation of Lemma \ref{le:recres}, for any $\bp$ and $i \in [d]$, the function $z \mapsto \gamma_i ^z$ is decreasing in $z \geq \rho_\bp$. Moreover, by Lemma \ref{le:sqz}, using the strict convexity of the function $f$ there, 
we have for all $z \geq \rho_\bp$, $\gamma_i^{\rho_\bp} - \gamma_i ^z \leq C \sqrt{ z - \rho_{\bp}}$ for some $C = C(\bp)$ (the inequality is even valid without square-root when $\rho_{\bp}<\rho'_{\bp}$ for an adequate choice of constant). 
 By Lemma \ref{le:recres}, it follows that for some new $C = C(\bp)$ for all $x \in \cG$, 
$$
|\cR^{\rho_\bp}_\bp (e,x) - \cR^{z}_\bp (e,x) | \leq C (|x| +1) \sqrt{ z - \rho_{\bp}}\cR^{\rho_p}_\bp (e,x),
$$
where $|x|$ is the distance of $x$ to $e$ in the tree $\cT_d$ and where we have used the telescopic sum decomposition (with the convention that a product over an empty set is one)
$$\prod_{i=1}^k a_i - \prod_{i=1}^k b_i = \sum_{j=1}^k \left(\prod_{i=1}^{j-1} a_i \right)( b_j - a_j) \left( \prod_{i=j+1}^{k} b_i \right).$$
By Lemma \ref{le:UnifU}, the diameter of $\partial U$ being at most $C \log k$, we find that for all $x \in \partial U$, 
$
\cR^{\rho_\bp}_\bp (e,x)  \leq  2 \cR^{z}_\bp (e,x) 
$ provided that $0 \leq z - \rho_{\bp} \leq c (\log k)^{-2}$ for some positive constant $c = c(\bp) >0$.  
We now fix $z = \rho_{\bp'} + c(\bp') (\log k)^{-2}$. From what precedes, for all $x \in \partial U$, 
$$
 \cR_{\bp'}^{\rho_{\bp'} } (e,x) \leq 2 \cR^{z}_{\bp'} (e,x) = \frac{2}{z} \sum_{t=0}^{\infty}\left( \frac{\cP_{\bp'}}{z}\right) ^t(e,x). 
$$
By Lemma \ref{le:entriesrho}, we have $\cP_{\bp'}^t (e,x) \leq \rho_{\bp'}^t$ and, for some new constant $C = C(\bp')$, for any $s \geq 0$, 
$$
\sum_{t = s}^{\infty} \left( \frac{\cP_{\bp'}}{z}\right) ^t(e,x)\leq C  (\log k )^2 e^{- \frac{s}{C (\log k)^2}}.
$$
We now recall that by Proposition \ref{prop:ptop}, for all $x \in \partial U$, $\cR^{z}_{\bp'} (e,x) \geq \cR^{\rho_{\bp'}}_{\bp'} (e,x) /2 \geq c / \sqrt k$. It follows that if $s = \lfloor C' \log k\rfloor^3$ for some large enough constant $C'$, we have 
$$
\frac 1 { z} \sum_{t = s}^{\infty} \left( \frac{\cP_{\bp'}}{z}\right) ^t(e,x) \leq \frac 1 2  \cR^{z}_{\bp'} (e,x). 
$$ 
Consequently, for this value of $s$, 
$$
\cR^{z}_{\bp'} (e,x) \leq  \frac 2 { z}  \sum_{t=0}^{s}\left( \frac{\cP_{\bp'}}{z}\right) ^t(e,x) \leq  \frac 2 { \rho_{\bp'}}  \sum_{t=0}^{s}\left( \frac{\cP_{\bp'}}{\rho_{\bp'}}\right) ^t(e,x).
$$
This concludes the proof of \eqref{domination}
\qed

\section{Random walks covered by a colored group}

\label{sec:lifted}

\subsection{Minimal mixing time for color covered random walks}
We now present a last extension of our results.  As in the setting of Theorem \ref{thegeneral}, we assume that for a finitely generated non-amenable group $\cG$, we have a sequence of finite sets $(V_n)$ with $\# V_n = n$ and $(\varphi_n)$ a sequence of actions of $\cG$ on $V_n$.  Let $r \geq 1$ be an integer. We think of $[r] = \{1, \ldots, r\}$ as a set of colors.  An element $\bp \in M_r (\dR) ^{\cG}$ is written as a matrix-valued vector $\bp = (p_g)_{g \in \cG}$ with $p_g \in M_r (\dR)$. The support of $\bp$ is then the subset of $\cG$ such that $p_g$ is not the null matrix. We consider $\bp  \in M_r (\dR) ^{\cG}$  with finite support such that
$$P_{1,\bp} := \sum_{ g \in \cG} p_g$$ is an irreducible stochastic  matrix on $[r]$ 
with invariant probability measure $\mu$. 
Then, we denote by $\cP_{\bp}$ the operator on $\ell^2 (\cG \times [r])$ defined by 
 \begin{equation}\label{eq:defPpQ}
\cP_{\bp}=\sum_{g \in \cG }p_g \otimes \lambda(g),
\end{equation}
where $\lambda(g)$ is as in \eqref{eq:defPp} and $\otimes$ is the tensor product.  In probabilistic terms, $\cP_{\bp}$ is the transition kernel of a random walk $(\cX_t)$ on $\cG \times [r]$ where the probability to jump from $(g,u)$ to $(h,v)$ is $p_{h g^{-1}} (u,v)$. We denote by $\rho_{\bp}$ the spectral radius of $\cP_\bp$ and by $\fh(\bp)$ the entropy rate of $\cP_{\bp}$ defined by: for any $u_0 \in [r]$, 
$$
\fh(\bp) = \lim_{t \to \infty}  - \frac{1}{t}\sum_{(g,u) \in \cG\times [r]} \cP_{\bp}^t  ((e,u_0),(g,u)) \log \cP_{\bp}^t((e,u_0), (g,u)). 
$$
The fact that $\fh(\bp)$ does not depend on $u_0$ is an easy consequence of the assumption that $P_{1,\bp}$ is irreducible. Again, if $\cG$ is non-amenable  and $\rho_{\bp} < 1$   holds, then  $\fh(\bp) >0$. 
 Besides, the proof of Shannon-McMillan-Breiman Theorem in \cite[Theorem 2.1]{MR704539}, actually proves that  if $ \cX_0 = (e,u_0)$, a.s.
 \begin{equation}\label{eq:SMMBQ}
 \fh (\bp) = \lim_{t \to \infty}  - \frac{\log \cP_{\bp}^t ((e,u_0),  \cX_t) }{t} .
 \end{equation}
With $(S_g)_{g \in \cG}$ as in \eqref{eq:defPnp}, we define the stochastic matrix  on $\dR^{V_n \times [r]}$  
\begin{equation}\label{eq:defPnpQ}
P_{n,\bp} = \sum_{g\in \cG} p_g \otimes S_g.
\end{equation}
This matrix is the transition kernel a  Markov chain on $V_n \times [r]$ covered by $( \cX_t)$ in the sense that if we define for $ (g,u ) \in \cG \times [r]$ and $x \in V_n$, $ \bar \varphi_n( (g,u) , x) := (\varphi_n (g,x),u)$ then $X_t := \bar \varphi_n(  \cX_t,x)$ is a Markov chain with transition matrix $P_{n,\bp}$ started at $(x,u_0)$. The measure $\pi_n (x,u) = \mu (u)/n$ is an invariant probability of $P_{n,\bp}$.
Moreover, since \eqref{eq:SMMBQ} holds, the proof of Proposition \ref{lb} actually implies that mixing time of $ X_t$, $\Tm_{n,\bp}(x,\veps)$, satisfies for any fixed $\veps \in (0,1)$ and uniformly in $x\in V_n$, the  lower bound \eqref{lentroop}.

\medskip

This setting allows to consider a random walk on the {\em $n$-lift of a base graph}. More precisely, let $G_1$ be a finite simple connected graph with $d/2$ undirected edges on the vertex set $[r]$. We consider the free group $\cG_{\mathrm{free}}$ with $d/2$ generators and their $d/2$ inverses $(g_1, \ldots, g_d)$, that is $g_i^{-1} = g_{i^*}$ for some involution on $[d]$ without fixed point.  Each generator $g_i$ is associated to a directed edge $(u_i,v_i)$ of $G_1$ and $g_i^{-1} = (v_i,u_i)$ is the inverse directed edge.  We consider the action of $\cG_{\mathrm{free}}$ on $[n]$ defined by $\varphi_n (g_i,x) = \sigma_i (x)$ where $(\sigma_1, \ldots, \sigma_d)$ are permutation matrices such that $\sigma_i^{-1} = \sigma_{i^*}$. Then, if $E_{k,\ell}  \in M_{r}(\dR)$ is the canonical matrix defined by $E_{k ,\ell}(i,j) = \IND_{\{(k,\ell)= (i,j)\}}$, then the graph $G_n$ with vertex set $[n] \times [r]$ and adjacency matrix $\sum_i E_{u_i,v_i} \otimes S_i$ is a simple graph which is called a $n$-lift (or a $n$-covering) of $G_1$: the $[n] \times [r] \to [r]$ map $\psi(x,u) = u$ is $n$ to $1$ and, for any $(x,u)$, the image by $\psi$ of the adjacent vertices of $(x,u)$ in $G_n$ coincides with the adjacent vertices of $\psi(x,u)$ in $G_1$. If $d_u$ is the degree of the vertex $u$ in $G_1$ and $p_{g_i} = E_{u_i,v_i} / d_{u_i} $ then $P_{1,\bp}$ and $P_{n,\bp}$ are the transition matrices of the simple random walks on $G_1$ and $G_n$ respectively. 

\medskip

We are ready to state the analog of Theorem \ref{thegeneral}.

\begin{theorem}\label{thegeneralQ}
Let $\cG$ be a finitely generated non-amenable group with the property RD,  $(V_n)$  a sequence of finite sets with $\# V_n = n$ and $(\varphi_n)$ a sequence of  actions of $\cG$ on $V_n$ which converges strongly.  For any integer $r \geq 1$ and any finitely supported $\bp \in M_r (\dR)^{\cG}$  such that $\rho_{\bp} < 1$  and  $P_{1,\bp}$ is an irreducible aperiodic Markov chain,  
the mixing time of the random walk with transition matrix $P_{n,\bp}$  satisfies, for every $\gep\in (0,1)$,
 \begin{equation*}
  \lim_{n\to \infty} \frac{\Tm_{n,\bp}(\gep)}{\log n}=\frac{1}{\fh(\bp)}.
 \end{equation*}
\end{theorem}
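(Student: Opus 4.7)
The plan is to adapt the strategy of Theorem \ref{thegeneral} to the colored setting, working on $\cG \times [r]$ in place of $\cG$ and handling the finite color coordinate by tensorial arguments. The matching lower bound $\liminf_n \Tm_{n,\bp}(\gep)/\log n \geq 1/\fh(\bp)$ already follows from \eqref{eq:SMMBQ} and the proof of Proposition \ref{lb} (as noted before the statement of the theorem), so only the upper bound needs to be established.

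Since $\rho_\bp < 1$, the Green's operator $\cR_\bp = (\cI - \cP_\bp)^{-1}$ is bounded on $\ell^2(\cG \times [r])$. For each starting color $u_0 \in [r]$, I set $u_{u_0}(g,v) := \cR_\bp((e,u_0),(g,v))$ and $\cU_{u_0} := \{(g,v) : u_{u_0}(g,v) > 1/k\}$. Reproducing the proof of Proposition \ref{le:UnifU} (using $\|\cP_\bp^t\|_{2 \to 2} \leq C \rho^t$ for some $\rho \in (\rho_\bp,1)$ and the finiteness of $r$), one obtains $|\cU_{u_0}| \leq C k \log k$ and that the projection of $\cU_{u_0}$ onto $\cG$ has diameter at most $C\log k$ in the Cayley graph $\Gamma$. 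Define $\tau := \inf\{ t \geq 0 : \cX_t \notin \cU_{u_0}\}$ where $u_0$ is the current starting color. The colored Shannon--McMillan--Breiman theorem \eqref{eq:SMMBQ} combined with the argument of Lemma \ref{le:tau} then yields $\dE_{(e,u_0)}[\tau] \leq (1+\gep)(\log k)/\fh(\bp)$ for $k$ large enough.

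Iterating $\tau$ (with the set $\cU_v$ selected according to the current color $v$ at each step) gives a backbone chain $\cY_s = \cX_{\tau_s}$ on $\cG \times [r]$ whose translation-invariant transition kernel has the form $\cQ = \sum_g q_g \otimes \lambda(g)$, where $q_g \in M_r([0,1])$ satisfies the exit-point bound $q_g(u,v) \leq 1/k$ and the conservation identity $\sum_{g,v} q_g(u,v) = 1$ for each $u$. Hence $\sum_g \|q_g\|_{\mathrm{HS}}^2 \leq r/k$ and the $\cG$-support of $\bq$ has diameter $O(\log k)$. The RD inequality \eqref{eq:RD} applied componentwise through the decomposition $\cQ = \sum_{i,j \in [r]} E_{ij} \otimes \cP_{\bq^{ij}}$ with $\bq^{ij} = (q_g(i,j))_g$ (or equivalently via the classical extension of Haagerup's inequality to the matrix algebra $M_r \otimes C_r^*(\cG)$) yields $\sigma_\bq = \|\cQ\|_{2 \to 2} \leq C_r (\log k)^{C_r}/\sqrt k$. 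By linearity, the strong convergence of $(\varphi_n)$ extends from scalar to matrix-valued coefficients (since $r$ is fixed), so the singular radius of the covered backbone kernel $Q_n = \sum_g q_g \otimes S_g$ satisfies $\sigma_{n,\bq} \leq 2 C_r (\log k)^{C_r}/\sqrt k$ for $n$ large depending on $k$.

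The conclusion is then identical to the end of the proof of Theorem \ref{thegeneral}: given $\delta>0$, I would choose $k$ large, set $m := \lfloor (1+\delta/4) \log n /\log k \rfloor$, $t_n := \lfloor (1+\delta) \log n /\fh(\bp) - \log \log n \rfloor$, $s_n := \lfloor \log \log n \rfloor$, and apply Proposition \ref{stoop} with $T = \tau_m$. The standard $\ell^2$ bound $\sqrt n\, \sigma_{n,\bq}^m$ controls $\|Q_n^m(x,\cdot) - \pi_n\|_{\TV}$ (tending to $0$ for the chosen $m$); the law of large numbers together with the exit-time estimate of the second paragraph gives $\dP(\tau_m > t_n) \to 0$; and the third term of \eqref{en3} is handled via $\limsup_{n,s \to \infty}\sigma_{n,\bp}(s) \leq \rho_\bp < 1$, which follows from strong convergence of $\bp$ itself combined with \eqref{eq:gelfand}, exactly as in \eqref{lalimsup}. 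The main obstacle is the clean matrix-valued extension of both the RD inequality \eqref{eq:RD} and Definition \ref{sc}; both are standard consequences of their scalar forms when $r$ is finite, obtained by expanding on the canonical basis $(E_{ij})_{i,j \in [r]}$ and invoking linearity and the triangle inequality over the $r^2$ resulting scalar terms, but care is needed to track constants depending on $r$ and to justify the aperiodicity assumption on $P_{1,\bp}$ (which guarantees convergence to the correct invariant measure $\pi_n(x,u)=\mu(u)/n$).
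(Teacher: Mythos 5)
Your overall architecture (backbone walk built from Green's-function exit sets, RD plus strong convergence for the spectral bound on the backbone kernel, conclusion via Proposition \ref{stoop}) is the same as the paper's, and your lower bound and exit-time estimates are fine. But there is a genuine gap in the spectral treatment of the color coordinate. The quantity you call $\sigma_{n,\bq}$, ``the singular radius of the covered backbone kernel,'' is ambiguous, and the ambiguity is fatal: if it means the operator norm of $Q_n$ restricted to $\ind^\perp$ (as in the scalar case), then it is bounded below by the second singular value of the color marginal $P_{1,\bq}=\sum_g q_g$ acting on the $(r-1)$-dimensional space $H_r\cap \ind^\perp$, where $H_r=\dR^r\otimes\ind$. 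Neither RD nor strong convergence says anything about this finite-dimensional piece, and there is no reason for it to be $\ll k^{-1/2}$; so the bound $\sqrt{n}\,\sigma_{n,\bq}^m\to 0$ does not follow. What strong convergence does control is the \emph{relative} singular radius $\bar\sigma_{n,\bq}=\|(Q_n)_{|H_r^\perp}\|$, computed moreover in the $\mu$-weighted space $\ell^2_n(\mu)$ (the invariant measure is $\pi_n(x,u)=\mu(u)/n$, not uniform). The paper therefore splits $f=\Pi_{H_r}f+\Pi_{H_r^\perp}f$: the $H_r^\perp$ part is killed by $\bar\sigma_{n,\bq}^m$, while the $H_r$ part (whose norm is only $O(1/\sqrt n)$ for a normalized delta) is killed by appending $s_n=\lfloor\log\log n\rfloor$ extra steps of $P_{n,\bp}$ to the stopping time and using $\sigma_{n,\bp}(s)=\max(\bar\sigma_{n,\bp}(s),\sigma_{1,\bp}(s))\le 1-\delta$ --- this is exactly where irreducibility \emph{and} aperiodicity of $P_{1,\bp}$ enter, a point your proposal flags but does not resolve. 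Without some such device the argument does not close.

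A second, smaller issue: the claim that strong convergence ``extends from scalar to matrix-valued coefficients by linearity'' is not correct as stated, since operator norms are not linear; the paper invokes the linearization/matrix-amplification theorem for strong convergence (\cite{MR3585560}) to get the exact limit $\lim_n\bar\sigma_{n,\bq}=\sigma_\bq$. Your componentwise route via $\cQ=\sum_{i,j}E_{ij}\otimes\cP_{\bq^{ij}}$ can be salvaged into a one-sided bound $\limsup_n\bar\sigma_{n,\bq}\le\sum_{i,j}\sqrt{\mu(i)/\mu(j)}\,\sigma_{\bq^{ij}}\le Cr^2R^C(\sum_g\|q_g\|^2)^{1/2}$, which suffices here --- but only after restricting to $H_r^\perp$ (so that each scalar block acts on $\ind^\perp$), normalizing the sub-probability vectors $\bq^{ij}$ to apply Definition \ref{sc}, and keeping track of the $\sqrt{\mu(i)/\mu(j)}$ weights coming from the norm of $E_{ij}$ on $\ell^2(\mu)$. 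Your choice of color-dependent exit sets $\cU_{u_0}\subset\cG\times[r]$ (versus the paper's single $\cU\subset\cG$ defined via the matrix norm $\|u(g)\|_{\ell^2(\mu)\to\ell^2(\mu)}$) is a harmless variation.
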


Note that in the above statement the RD property and the strong convergence property are defined in terms of scalar valued vectors $\bp \in \ell^2 (\cG)$.  From \cite{BC18}, an example of application of Theorem \ref{thegeneralQ} is the random walk on a random $n$-lift of a weighted base graph such that  $P_{1,\bp}$ is irreducible and aperiodic (see \cite{C-K} for a recent alternative and independent proof of this case).

\subsection{Proof of Theorem \ref{thegeneralQ}} We let $(\cX_t)$ be the random walk with kernel $\cP_\bp$ started from $ \cX_0 = (e,u_0)$. For $ (g,u ) \in \cG \times [r]$ and $x \in V_n$, we set $  \bar \varphi_n( (g,u) ,x) = (\varphi_n (g,x),u)$ and let $X_t := \bar  \varphi_n(  \cX_t,x)$ be a Markov chain with transition matrix $P_{n,\bp}$ started at $(x,u_0)$.   We adapt the arguments of Section \ref{sec:thegeneral} to our matrix-valued context. 

\subsubsection{Relative spectral radius, strong convergence and RD property}
Let $\bq = (q_g) \in M_r (\dR)^{\cG}$ with finite support. We define $\ell^2(\mu)$ as the Hilbert space on  $\dR^r$ endowed with the scalar product $\langle f , g \rangle_{\mu}  = \sum_{i} \mu(i)\bar f(i) g(i)$. Similarly, $\ell^2_{n}(\mu)$ and $\ell^2_{\cG} (\mu)$ are the Hilbert spaces on the vector spaces $\dR^{V_n \times [r]}$ and $\dR^ {\cG \times [r]}$  endowed with the scalar products:
$$
\langle f , g \rangle_{\mu}  = \sum_{(x,i) \in X \times [r]} \mu(i)\bar f(x,i) g(x,i),
$$
with $X = V_n$ and $X = \cG$ respectively.  We note that the subspace of $\dR^ {V_n \times [r]}$: $H_r = \dR^r \otimes \ind$  of vectors $f$ of the form for some $g \in \dR^r$, $f(x,i) = g(i)$ is an invariant subspace of dimension $r$ for $P_{n,\bq}$  and its adjoint in $\ell^2_n (\mu)$.  Hence $P_{n,\bq}$ admits a direct sum decomposition  on  $H_r \oplus H_r^\perp$. We note also the restriction of $P_{n,\bq}$ to $H_r$ coincides with $P_{1,\bq}$. 
We define the {\em relative singular radius} as the following operator norm
\begin{equation}\label{eq:specradQ}
\bar \sigma_{n,\bq} := \| (P_{n,\bq}) _{|H_r^\perp} \|_{\ell^2_{n}(\mu) \to \ell^2_{n}(\mu)} .
\end{equation}
From \cite[p256]{MR3585560} (see also \cite{MR1401692}), if $(\varphi_n)$ converges strongly then we have 
\begin{equation}\label{eq:SCQ}
\lim_{n \to \infty} \bar \sigma_{n,\bq} = \sigma_{\bq},
\end{equation} 
where $ \sigma_{\bq} := \| \cP_{\bq} \|_{\ell^2_{\cG} (\mu) \to \ell^2_{\cG} (\mu)} $. 

Besides, let $E_{ij} \in M_r (\dR)$ be the canonical matrix with all entries zero but entry $(i,j)$ equal to $1$. The $\ell^2(\mu) \to \ell^2(\mu)$ operator norm of $E_{ij}$ is $\sqrt{ \mu(i) / \mu(j)}$. Since $\rho_{\bq}$ coincides with the  $\ell^2_{\cG}(\mu) \to \ell^2_{\cG}(\mu)$ operator norm, from the triangle inequality, we have 
$$
\sigma_{\bq}  = \NRM{ \sum_{{i,j} \in [r]^2 } \sum_{g \in \cG} q_g(i,j) E_{ij} \otimes  \lambda(g) }_{\ell^2_{\cG}(\mu) \to \ell^2_{\cG}(\mu)}   \leq   \sum_{(i,j) \in [r]^2}\sqrt{\frac{\mu(i)}{ \mu(j)}} \sigma_{\bq(i,j)},
$$
where  $\bq(i,j)  = (q_g(i,j)) \in \dR^{\cG}$ and $\sigma_{\bq(i,j)}$ is the singular radius of  $\cP_{\bq(i,j)}$ in $\ell^2(\cG)$. It follows, that if $\cG$ has the RD property and $R$ is the diameter of the support of $\bq$ (in the Cayley graph associated to any symmetric generating set $\cA$) then, for some constant $C(\cG,\cA)>0$,
\begin{equation}\label{eq:RDQ}
\sigma_{\bq}  \leq C R^C \sum_{(i,j) \in [r]^2} \sqrt{\frac{\mu(i)}{ \mu(j)}} \sqrt{ \sum_{g \in \cG} q_g(i,j)^2} \leq  Cr^2 R ^C \sqrt{ \sum_{g \in \cG} \| q_g \|_{\ell^2(\mu)\to \ell^2(\mu)}^2},
\end{equation}
where we have used that  $\sqrt{\mu(i)/\mu(j)} |q_{g}(i,j)| = \| q_g (i,j) E_{ij} \|_{\ell^2(\mu)\to \ell^2(\mu)} \leq \| q_g \|_{\ell^2(\mu)\to \ell^2(\mu)}$.

\subsubsection{Skeleton Walk.} We now adapt the argument of Subsection \ref{skelex}.
We let $\cR_{\bp}= (\cI_{\cG \times [r]} -  \cP_{\bp} )^{-1}$ be the Green's operator associated with $\cP_{\bp}$. 
For $g,h \in \cG$, we denote by $\cR_{\bp} (g,h) \in M_r(\dR)$ the matrix whose entry $(i,j)$ is $\cR_{\bp} ((g,i),(h,j))$. 
 For $g \in \cG$, we define $u(g) \in M_r (\dR)$ as the matrix 
$$
u(g) :=  \cR_{\bp} (e,g) = \sum_{t = 0}^ {\infty} \cP_{\bp}^t (e,g),
$$
where  $\cP_{\bp}^t (g,h)  \in M_r(\dR)$ has entry $(i,j)$ equal to  $\cP^t_{\bp} ((g,i),(h,j))$.
 Given $k \geq 1 $, we define the set 
\begin{equation}\label{defuuQ}
\cU := \BRA{ g \in \cG : \| u (g) \|_{\ell^2(\mu)\to \ell^2(\mu)} \geq 1/k}. 
\end{equation}
The backbone walk is the induced walk on the successive exit times from $\cU$:  $\tau_0 := 0$, $\tau_1 = \tau$ and, for integer $s \geq 1$, $\tau_{s+1} := \inf \{ t \geq \tau_k :  \cX_{t}  \cX^{-1}_{\tau_s} \notin \cU \}$. We define $\cQ = \cP_{\bq}$ as the transition kernel of the random walk $ \cX_{\tau_m}$.

From \eqref{eq:SMMBQ} and $\rho_{\bp} < 1$, the proofs and statements of Proposition \ref{le:UnifU} and Lemma \ref{le:tau} continue to hold in our new setting (in \eqref{eq:tandilt}, we replace $q_g \leq 1/k$ by $\| q_g \|_{\ell^2(\mu)\to \ell^2(\mu)} \leq 1 /k$).

\subsubsection{Deducing mixing time from RD property and the strong convergence}
We may now conclude the proof of Theorem \ref{thegeneralQ} by adapting the content of Subsection \ref{sec:deduce}.

\begin{proof}[Proof of Theorem \ref{thegeneralQ}]
We fix $\veps \in (0,1)$, $\delta>0$  and $(x,u_0) \in V_n \times [r]$ and prove that for $n$ sufficiently large
$$\Tm_{n,\bp}((x,u_0),\gep)\le   (1+\delta)\log n /\fh(\bp).$$ 
Let $(\tau_m)$ and $U$ be as above for some large $k$ to be chosen. We set   $m:= \lfloor (1+\delta/4) (\log n ) / \log k \rfloor$. 

For integer $s \geq 1$, the relative $s$-th singular radius is 
$$
\bar \sigma_{n,\bp} (s) := \| (P^s_{n,\bp}) _{|H_r^\perp} \|^{1/s}_{\ell^2_{n}(\mu) \to \ell^2_{n}(\mu)} \quad \hbox{ and } \quad \sigma_{\bp} (s) := \| \cP^s_{\bp} \|_{\ell^2_{\cG}(\mu) \to \ell^2_{\cG}(\mu)}^{1/s}.
$$
From \eqref{eq:SCQ}, for all $s \geq 1$, $\lim_n \bar \sigma_{n,\bp} (s) = \sigma_{\bp}(s) < 1$. Since $\rho_{\bp} < 1$ and $\lim_{s \to \infty} \sigma_{\bp} (s) = \rho_\bp$, we deduce that for all $s \geq s_0$ large enough and all $n \leq n_0$ large enough, $\sigma_{n,\bp} (s) \leq 1- \delta_0$ for some $\delta_0 >0$ (we argue as below \eqref{eq:sms}). Moreover, since  $P_{1,\bp}$ is irreducible and aperiodic, we have that $\sigma_{1,\bp} <1$. We deduce that for $\delta = \min( \delta_0, 1 - \sigma_{1,\bp} ) >0$, for all $s \geq s_0$ and all $n \geq n_0$ 
\begin{equation}\label{eq:gdidie}
\sigma_{n,\bp} (s) = \| (P^s_{n,\bp} ) _{|\IND^\perp} \|^{1/s}_{\ell^2_n(\mu)\to \ell^2_n(\mu)} = \max (\bar \sigma_{n,\bp}(s),\sigma_{1,\bp} (s)) \leq 1 - \delta.
\end{equation}
We use Proposition \ref{stoop} for the walk $ X_t = \bar \varphi_n( \cX_t,x)$ with
$$T = \tau_m + s , \quad t=\lfloor (1+\delta)\log n /\fh- 2 \log\log n \rfloor \quad \text{ and } s=\lfloor \log \log n\rfloor .$$

For our choice of $s$, it follows from \eqref{eq:gdidie} that the third term in \eqref{en3} is smaller than $\gep/3$. 
It remains to prove that for $n$ sufficiently large
\begin{equation}\label{twopartsQ}
 \bbP[\tau_m>t]\le \gep/3 \quad \text{ and }  \quad \| P_{n,\bp}^s Q^m_n  ((x,u_0),\cdot)-\pi_n  \|_{\TV}\le \gep/3.
\end{equation}
where $Q_{n} = P_{n,\bq}$ is the Markov chain of the induced walk $X_{\tau_m}=  \bar \varphi_n(  \cX_{\tau_m},x)$ on $V_n \times [r]$. For the first inequality of \eqref{twopartsQ}, we choose $k(\delta)$ sufficiently large and it is a consequence of Lemma \ref{le:tau} and the law of large numbers.

\medskip

The second inequality of \eqref{twopartsQ} is obtained using spectral estimates for $Q_n = P_{n,\bq}$.
 The Cauchy-Schwarz inequality gives
\begin{equation}\label{spectrosQ}
 \| P_{n,\bp}^s Q_n^m ((x,u_0),\cdot)-\pi_n\|_{\TV}\le C_0 \sqrt{n }   \| Q_n^m  P_{n,\bp}^s   f  \|_{\ell_n^2(\mu)} ,
\end{equation}
with $C_0 = \sqrt{r / \min_i  \mu(i)}$ and   $f (y,v)= \delta_{(x,u_0) } (y,v) / \mu(v)   - 1 / n$.  Let $\Pi_{H}$ be the orthogonal projection in $\ell^2_n (\mu)$ onto a vector space $H$. We find
\begin{equation}\label{eq:gedgh}
  \|  Q^m_n P_{n,\bp} ^s   f \|_{\ell_n^2(\mu)}  \leq   \| Q^m_n  P_{n,\bp} ^s   \Pi_{H_r} f \|_{\ell_n^2(\mu)} + \|  Q^m_n   P_{n,\bp}^s \Pi_{H^\perp_r} f \|_{\ell_n^2(\mu)}.
\end{equation}
We now compute a spectral bound of the two terms on the right-hand side of \eqref{eq:gedgh}. We first observe that $\| f\|_{\ell^2 _n (\mu)} \leq 1 $ and $\| \Pi_{H_r} f\|_{\ell^2 _n (\mu)} \leq C/ \sqrt n  $  with $C = 1/ \sqrt{\min_i \mu(i)}$. Since $\langle f , \IND \rangle_{\mu} = 0$, we find from \eqref{eq:gdidie} and the fact that $Q_n$ is a contraction in $\ell^2_n (\mu)$,
\begin{equation}\label{zeboundQ0}
 \| Q^m_n  P_{n,\bp} ^s   \Pi_{H_r} f \|_{\ell_n^2(\mu)} \leq \|  P_{n,\bp} ^s    \Pi_{H_r} f \|_{\ell_n^2(\mu)} \leq \frac{C}{ \sqrt n} \sigma_{n,\bp}(s)^s   \leq  \frac{C}{ \sqrt n} (1 - \delta)^s .  
\end{equation}
We now give a bound of the second term on the right-hand side of \eqref{eq:gedgh}. From \eqref{eq:RDQ} and Proposition \ref{le:UnifU}, we have  for some constant $C$  
\begin{equation*}
\sigma_{ \bq} \le C k^{-1/2} (\log k)^{C}.
\end{equation*}
From \eqref{eq:SCQ} we deduce that for all $n$ large enough,
\begin{equation*}\label{zeboundQ}
 \bar \sigma_{n,\bq} \leq 2C k^{-1/2} (\log k)^{C}.
\end{equation*}
Since $\| f\|_{\ell^2 _n (\mu)} \leq 1  $, $P_{n,\bp} ^s \Pi_{H^\perp_r}  = \Pi_{H^\perp_r}P_{n,\bp}^s $ and $P_{n,\bp}$ is a contraction in $\ell^2_n(\mu)$, we deduce that
\begin{equation}\label{zeboundQ1}
\|  Q^m_n   P_{n,\bp}^s \Pi_{H^\perp_r} f \|_{\ell_n^2(\mu)} \leq  \bar \sigma_{n,\bq} ^m \| f \|_{\ell_n^2(\mu)} \leq 2C k^{-1/2} (\log k)^{C}
\end{equation}
 Equation \eqref{spectrosQ} together with \eqref{eq:gedgh}-\eqref{zeboundQ0}-\eqref{zeboundQ1} guaranties that $X_{\tau_m+s}$ is close to equilibrium in total variation. It concludes the proof of \eqref{twopartsQ}. \end{proof}

\section{Proof of Lemma \ref{le:simplelemma}}

Let us consider $\cA$ the set of finite words in the alphabet $[d]$, $\cB$ the set of words without repetition in $[d]$ and for a fixed involution $*$ on $[d]$, $\cC$ the set of finite words in which the patterns $i i^*$ and $i^*i$ do not appear.

Given $\alpha = (\alpha_i)_{i=1}^d$ a set of non-negative numbers in $[0,1)^d$, we define the function $F_{\alpha}(i_1. \dots. i_t)= \alpha_{i_1}\cdots \alpha_{i_t}$. 
We have immediately 
\begin{equation}
 \sum_{\cA}F_{\alpha}({\bf i})<\infty \Leftrightarrow  \sum_{[d]} \alpha_i <1.
\end{equation}
Now a word ${\bf i}\in \cA$ can be encoded by a word ${\bf j}\in\cB$ and
and a sequence $(n_t)_{t=1}^{|\bf j|}$ which counts how many time each letter is repeated. For this reason we have, given $\beta = (\beta_i)_{i=1}^d$,
$\sum_{\cA} F_{\beta}({\bf i})=  \sum_{\cB} F_{\beta'}({\bf i}),$
where $\beta_i'= \sum_{n\ge 1}\beta_i^n= \beta_i/(1-\beta_i)$. Hence taking $\beta_i=\alpha_i/(1+\alpha_i)$ we obtain 
\begin{equation}
 \sum_{\cB}F_{\alpha}({\bf i})<\infty \Leftrightarrow  \sum_{[d]} \frac{\alpha_i}{1+\alpha_i} <1.
\end{equation}
Finally to encode a word in ${\bf i}\in \cC$, we first consider a finite word ${\bf j}$ without repetition 
in $[d']$ where $d'$ is the number of conjugation classes for $*$. Let $\cB'$ be the set of such words $\bf j$. Then, to encode $\bf i$, we have to replace each of the letter of ${\bf j}$ by a pattern.
If the conjugation class $j \in [d']$ is a single element $\{i\}$ in $[d]$, there is only one possible pattern which is $i$. We thus define the weight of $j$ as  $\gamma_j := \alpha_i$.
Otherwise, the conjugation class $j \in [d']$ is a pair $\{i, i^*\}$. Then the possible patterns are $(i^*)^n$ and $i^n$, with $n\ge 1$. This gives a total weight  $\gamma_j := \alpha_i/(1-\alpha_i)+\alpha_{i^*} /(1-\alpha_{i^*})$. We thus have 
$\sum_{{\bf i}\in \cC} F_{\alpha}({\bf i})=   \sum_{{\bf j}\in \cB'} F_{\gamma}({\bf j})$.
In particular the sum is finite if and only if 
\begin{equation}
\sum_{j\in [d']} \frac{\gamma_j}{1+\gamma_j}=\sum_{i\in [d]}\left(
\frac{\alpha_i}{1+\alpha_i}\ind_{\{i=i^*\}}+\frac{1}{2} \frac{\alpha_i (1-\alpha_i)^{-1}+\alpha_{i^*}(1-\alpha_{i^*})^{-1} }{1+ \alpha_{i}(1-\alpha_i)^{-1}+\alpha_{i^*}(1-\alpha_{i^*})^{-1}} \ind_{\{i\ne i^*\}} \right)<1.
\end{equation}
This is the required statement. \qed

\bibliographystyle{abbrv}
\bibliography{bib}

\end{document}